\newtheorem{theorem}{Theorem}
\newtheorem{proposition}{Assumption}
\newtheorem{lemma}{Lemma}
\newtheorem{remark}{Remark}
\newtheorem{example}{Example}
\numberwithin{equation}{section} 
\numberwithin{theorem}{section} 
\numberwithin{lemma}{section}
\numberwithin{remark}{section}
\numberwithin{corollary}{section}
\numberwithin{example}{section}
\numberwithin{definition}{section}
\newcommand{\po}{\left(}
\newcommand{\pf}{\right)}
\newcommand{\cco}{\llbracket}
\newcommand{\ccf}{\rrbracket}
\newcommand{\R}{\mathbb{R}}
\date{}
\title{Convergence rates for the Vlasov-Fokker-Planck equation and uniform in 
time propagation of chaos in non convex cases}
\author{Arnaud Guillin\footnote{Laboratoire de Math\'{e}matiques Blaise Pascal - Universit\'{e} Clermont-Auvergne. Email : arnaud.guillin[AT]uca.fr},$\ $ Pierre Le Bris\footnote{ Laboratoire Jacques-Louis Lions - Sorbonne Université. Email : pierre.lebris[AT]sorbonne-universite.fr}$\ $ and Pierre Monmarch\'{e}\footnote{ Laboratoire Jacques-Louis Lions - Sorbonne Université. Email : pierre.monmarche[AT]sorbonne-universite.fr}}
\begin{document}
\maketitle

\begin{abstract}
We prove the existence of a contraction rate for Vlasov-Fokker-Planck equation in Wasserstein distance, provided the interaction potential is (locally) Lipschitz continuous and the confining potential is both Lipschitz continuous and greater than a quadratic function, thus requiring no convexity conditions. Our strategy relies on coupling methods suggested by  A. Eberle \cite{Eberle_reflection} adapted to the kinetic setting enabling also to obtain uniform in time propagation of chaos in a non convex setting.
\end{abstract}


\section{Introduction}

\subsection{Framework}

Let $U$ and $W$ be two functions in $\mathcal{C}^1\left(\mathbb{R}^d\right)$. We consider the Vlasov-Fokker-Planck equation: 
\begin{equation}\label{Liouville}
\partial_t\nu_t\left(x,v\right)=-\nabla_x\cdot\left(v\nu_t\left(x,v\right)\right)+\nabla_v\cdot\left(\left(v+\nabla U\left(x\right)+\nabla W\ast\mu_t\left(x\right)\right)\nu_t\left(x,v\right)+\nabla_v\nu_t\left(x,v\right)\right),
\end{equation}
where $\nu_t(x,v)$ is a probability density in the space of positions $x\in\mathbb{R}^d$ and velocities $v\in\mathbb{R}^d$,
\begin{equation*}
\mu_t\left(x\right)=\int_{\mathbb{R}^d}\nu_t\left(x,dv\right)
\end{equation*}
is  the space marginal of $\nu_t$ and
\begin{equation*}
\nabla W\ast\mu_t(x)=\int_{\mathbb{R}^d}\nabla W(x-y)\mu_t(dy).
\end{equation*}
It has the following probabilistic counterpart, the non linear stochastic 
differential equation of \textit{McKean-Vlasov} type, i.e. $\nu_t$ is the 
density of the law at time $t$ of the $\mathbb{R}^{2d}$-valued process $(X_t,V_t)_{t\ge 0}$ evolving as the mean field SDE (diffusive Newton's equations)
\begin{equation}\label{FP}
\left\{
    \begin{array}{ll}
        dX_t=V_tdt\\
        dV_t=\sqrt{2}dB_t-V_tdt-\nabla U\left(X_t\right)dt-\nabla W\ast\mu_t\left(X_t\right)dt\\
        \mu_t=\text{Law}\left(X_t\right).
    \end{array}
\right.
\end{equation}
Here, $\left(X_t,V_t\right)\in\mathbb{R}^d\times\mathbb{R}^d$, $\left(B_t\right)_{t\geq0}$ is a Brownian motion in dimension $d$ on a probability space $\left(\Omega, \mathcal{A},\mathbb{P}\right)$, and $\mu_t$ is the law of the position $X_t$. The symbol $\nabla$ refers to the gradient operator, and the symbol $\ast$ to the operation of convolution. 

Both in the probability and in the partial differential equation community, existence and uniqueness of McKean-Vlasov processes have been well studied. See \cite{McKean}, \cite{Funaki}, \cite{Saint_Flour_1991} for some historical milestones. In the specific case of \eqref{Liouville} and \eqref{FP}, under the assumptions on $U$ and $W$ introduced in the next section, existence and uniqueness follow from \cite{Meleard_Asymptotic}   for square integrable initial data.

A related process is the $N$ particles system in $\mathbb{R}^d$ in mean field interaction
\begin{equation}\label{FP_part}
\forall i\in \cco 1,N\ccf\,,\qquad \left\{
    \begin{array}{rcl}
        dX^i_t &=& V^i_tdt,\\
        dV^i_t &= & \displaystyle{\sqrt{2}dB^i_t-V^i_tdt-\nabla U\left(X^i_t\right)dt-\frac{1}{N}\sum_{j=1}^N\nabla W\left(X^i_t-X^j_t\right)dt},
        \end{array}
\right.
\end{equation}
where $X^i_t$ and $V^i_t$ are respectively the position and the velocity of the i-th particle, and $\left(B^i_t, 1\leq i\leq N\right)$ are independent Brownian motions in dimension $d$. One can see equation~\eqref{FP_part} as an approximation of equation~\eqref{FP}, where the law $\mu_t$ is replaced by the empirical measure $\mu_t^N=\frac{1}{N}\sum_{i=1}^N\delta_{X^i_t}$. 

It is well known, at least in a non kinetic setting (\cite{Meleard_Asymptotic}, \cite{Saint_Flour_1991}), that, under some weak conditions on $U$ and $W$, $\mu_t^N$ converges in some sense toward the law $\mu_t$ of $X_t$ solution of \eqref{FP}. This phenomenon has been stated under the name \textit{propagation of chaos}, an idea motivated by M. Kac \cite{kac1956}), and greatly 
developed by A.S. Sznitman \cite{Saint_Flour_1991}.

In statistical physics, \eqref{FP_part} is a Langevin equation that describes the motion of $N$ particles subject to damping, random collisions and a \emph{confining potential} $U$ and interacting with one another through an \emph{interaction potential} $W$, which can be polynomial (granular 
media), Newtonian (interacting stellar) or Coulombian (charged matter). See for instance \cite{Langevin_anglais} for an english translation of P. Langevin's landmark paper on the physics behind the standard underdamped Langevin dynamics. Therefore, Equation~\eqref{Liouville}  has the following natural interpretation: the solution $\nu_t$ is the density of the law 
at time $t$ of the process $(X_t,V_t)_{t\geq0}$ evolving according to \eqref{FP}, and thus describes the limit dynamic of a cloud of (charged) particles. In particular, it holds importance in plasma physics, see \cite{Vlasov_1968}. 

More recently, mean-field processes such as \eqref{FP_part} have drawn much interest in the analysis of neuron networks in machine learning \cite{cheng2018underdamped,cheng2018sharp}. In this context of stochastic algorithms, it is known that the underdamped Langevin dynamics (not necessarily with mean-field interactions) can converge faster than the overdamped (i.e non kinetic) Langevin dynamics \cite{cheng2018underdamped,GM16} toward its invariant measure. For example, the results on \eqref{FP} could be applied to study the convergence of the Hamiltonian gradient descent algorithm for the overparametrized optimization as done in \cite{kazeykina2020ergodicity} for Generative Adversarial Network training.

The goal of the present work is twofold. We are interested, first,  in the long-time convergence of the solution of \eqref{FP} toward an equilibrium and, second, to a uniform in time convergence as $N\rightarrow +\infty$ of   \eqref{FP_part} toward   \eqref{FP}. It is well known that such results cannot hold in full generality, as the non-linear equation \eqref{Liouville} may have several equilibria. Here we will consider cases where the interaction is sufficiently small for the non-linear equilibrium to be unique and globally attractive, and for the propagation of chaos to be uniform in time.

There are various methods to study the long time behavior of kinetic type 
processes, such as Lyapunov conditions or hypocoercivity, and we will discuss these approaches and compare them with our results later on.
We rely here on coupling methods following the guidelines of A. Eberle \textit{et al.} in \cite{Eberle_Guillin_Zimmer_Langevin} where  the convergence to equilibrium is established for \eqref{FP} without interaction, and also extend the approach to handle only locally Lipschitz coefficient. In a second part, we also use reflection couplings (see \cite{Uniform_Prop_Chaos}) for the propagation of chaos property. 

Let us briefly describe the coupling method. The basic idea is that an upper bound on the Wasserstein distance between two probability distributions is given by the construction of any pair of  random variables distributed respectively according to those. The goal is thus to construct simultaneously two solutions of \eqref{FP} that have a trend to get closer with 
time. Have $\left(X_t,V_t\right)$ be a solution of \eqref{FP} driven by some Brownian motion $(B_t)_{t\geqslant 0}$ and let $\left(X'_t, V'_t\right)$ solves
\begin{equation*}
\left\{
    \begin{array}{ll}
        dX_t'=V_t'dt\\
        dV_t'=\sqrt{2}dB_t'-V_t'dt-\nabla U\left(X_t'\right)dt-\nabla W\ast\mu_t\left(X_t'\right)dt\\
        \mu_t'=\text{Law}\left(X_t'\right).
    \end{array}
\right.
\end{equation*}
with $\left(B'_t\right)_{t\geqslant 0}$ a  d-dimensional Brownian motion. 
 A coupling of $(X,V)$ and $(X',V')$ then follows from a coupling of the Brownian motions $B$ and $B'$. Choosing $B=B'$ yields the so-called \textit{synchronous} coupling, for which the Brownian noise cancels out in the infinitesimal evolution of the  difference $\left(Z_t,W_t\right)=\left(X_t-X'_t,V_t-V'_t\right)$. In that case the contraction of a distance between the processes can only be induced by the deterministic drift, as in \cite{BGM10}. Such a deterministic contraction only holds under very restrictive conditions, in particular $U$ should be strongly convex. Nevertheless, in more general cases, the calculation of the evolution of $Z_t$ 
and $W_t$ (see Section~\ref{sec:step_one} below) shows that there is still some deterministic contraction when $Z_t+W_t=0$. We can therefore  use a synchronous coupling in the vicinity of this subspace.

Outside of $\{(z,w)\in\mathbb R^{2d}, z+w=0\}$, it is necessary to make 
use of the noise to get the processes closer together, at least in the direction orthogonal to this space. In order to maximize the variance of this noise, we then use a so-called  \textit{reflection} coupling, which consists in $B$ and $B'$ being \textit{symmetrical}  (i.e $B_t'=-B_t$) in 
the direction of space given by the difference of the processes, and synchronous in the orthogonal direction. In other words, writing
\begin{align*}
e_t=\left\{
    \begin{array}{ll}
    \frac{Z_t+W_t}{|Z_t+W_t|}&\text{ if }Z_t+W_t\neq0\\
    0&\text{ otherwise}
    \end{array}
\right.
\end{align*}
we consider $dB'_t=\left(Id-2e_te^T_t\right)dB_t$. Levy's characterization then ensures that it is indeed a Brownian motion. 

Finally we construct a Lyapunov function $H$ to take into account the trend of each process to come back to some compact set of $\mathbb R^{2d}$. We are then led to the study of a suitable distance between the two processes, which will be of the form $\rho_t:=f(r_t)(1+\epsilon H(X_t,V_t)+\epsilon H(X'_t,V'_t))$, with $r_t=\alpha|Z_t|+|Z_t+W_t|$, where $\alpha,\epsilon>0$ and the function $f$ are some parameters to choose. More precisely, we have to choose these parameters carefully in order for $\mathbb{E}\rho_t$ to decay exponentially fast. This leads to several constraints on $\alpha,\epsilon$ and on the parameters involved in the definition of $f$, and we have to prove that it is possible to meet all these conditions simultaneously. For the sake of clarity, in fact, we present the proof in a different order, namely we start by introducing very specific parameters and, throughout the proof, we check that our choice of parameters implies the needed constraints.

The study of the limit $N\rightarrow +\infty$ is based on a similar coupling, except that we couple a system of $N$ interacting particles \ref{FP_part} with $N$ independent non-linear processes \ref{FP}.

The next subsections describe our main results and compare them to the few existing ones in the literature. Section~\ref{section_2} presents the precise construction of the aforementioned {\it ad hoc} Wasserstein distance. The proof of the long time behavior of the Vlasov-Fokker-Planck equation when confinement and interaction coefficient are Lipschitz continuous is done in Section~\ref{Preuve_1}, 
whereas the propagation of chaos property is proved in Section~\ref{Preuve_2}. An appendix gathers technical lemmas and the modifications of the main proofs when the confinement is only supposed locally Lipschitz continuous.

\subsection{Main results}

For $\mu$ and $\nu$  two probability measures on $\mathbb{R}^{2d}$, denote by $\Pi\po\mu,\nu\pf$ the set of couplings of $\mu$ and $\nu$, i.e. the 
set of probability measures $\Gamma$ on $\mathbb{R}^{2d}\times\mathbb{R}^{2d}$ with $\Gamma(A\times \R^{2d}) = \mu(A)$ and $\Gamma(\R^{2d}\times 
A ) = \nu(A)$ for all Borel set $A$ of $\R^{2d}$. We will define $L^1$ and $L^2$ Wasserstein distances as  
\begin{align*}
\mathcal{W}_1\left(\mu,\nu\right)&=\inf_{\Gamma\in\Pi\left(\mu,\nu\right)}\int\left(|x-\tilde{x}|+|v-\tilde{v}|\right)\Gamma\left(dxdvd \tilde{x}d\tilde{v} \right),\\
\mathcal{W}_{2}\left(\mu,\nu\right)&=\left(\inf_{\Gamma\in\Pi\left(\mu,\nu\right)}\int\left(|x-\tilde{x}|^2+|v-\tilde{v}|^2\right)\Gamma\left(dxdvd \tilde{x}d\tilde{v}  \right)\right)^{1/2}\,.
\end{align*}
Our main results will be stated in terms of these distances, even if we work and get contraction in the Wasserstein distance defined with the aformentioned $\rho$. Let us detail the assumptions on the potentials $U$ and $W$.
\begin{proposition}\label{HypU}
The potential $U$ is non-negative and there exist $\lambda>0$ and $A\geq 0$ such that 
\begin{equation}\label{HypU-lya}
\forall x\in\mathbb{R}^d\,,\qquad \frac{1}{2}\nabla U\left(x\right)\cdot x\geq\lambda\left(U\left(x\right)+\frac{|x|^2}{4}\right)-A.
\end{equation}
\end{proposition}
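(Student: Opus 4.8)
The item labelled \ref{HypU} is declared with the \texttt{proposition} environment, which this paper redefines to print as \textbf{Assumption}: it is a standing hypothesis on the confining potential $U$, not a result, so there is nothing to prove. What is worth doing is to say what \eqref{HypU-lya} encodes and to check that it is non-vacuous. The inequality $\tfrac12\nabla U(x)\cdot x\ge\lambda\bigl(U(x)+\tfrac14|x|^2\bigr)-A$ is a one-sided drift (Lyapunov) condition: outside a compact set absorbed by $A$, the field $\nabla U$ points sufficiently outward. Integrating it along a ray $s\mapsto U(sx)$ and keeping only the term $\tfrac\lambda4|x|^2$ on the right already forces $U(sx)\gtrsim s^2|x|^2$, so \eqref{HypU-lya} imposes at least quadratic growth of $U$ at infinity — in line with the paper's standing requirement that $U$ have a Lipschitz gradient and dominate a quadratic, a class that need not be convex, and incompatible with a bounded or globally Lipschitz $U$. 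This is precisely the input used later to construct, for a single solution of \eqref{FP}, a Lyapunov function $H$ of the shape (roughly) $U(x)+\tfrac14|x|^2$ plus a positive quadratic form in $(x,v)$ coupling position and velocity, satisfying $\mathcal LH\le -cH+C$; the coefficients $\tfrac12$ and $\tfrac14$ in \eqref{HypU-lya} are tuned to that generator computation rather than to the more familiar shape $\nabla U(x)\cdot x\ge c|x|^2-C$.

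To see that the admissible class is non-empty I would test \eqref{HypU-lya} on representatives. For $U(x)=\tfrac a2|x|^2$, $a>0$: the left side is $\tfrac a2|x|^2$ and the right side is $\lambda\bigl(\tfrac a2+\tfrac14\bigr)|x|^2-A$, so any $\lambda\le a/(a+\tfrac12)$ works with $A=0$. For the non-convex perturbation $U(x)=\tfrac12|x|^2+g(x)$ with $g$ and $\nabla g$ bounded (and a constant added so that $U\ge0$), the difference of the two sides of \eqref{HypU-lya} equals $\bigl(\tfrac12-\tfrac{3\lambda}{4}\bigr)|x|^2+\tfrac12\nabla g(x)\cdot x-\lambda g(x)$, and as soon as $\lambda<\tfrac23$ the quadratic term dominates the remaining at-most-linear terms, so a large enough $A$ secures \eqref{HypU-lya} on all of $\R^d$; the same computation covers any nonnegative $U\in\mathcal C^1$ with $U(x)\le c_1|x|^2+c_2$ and $\nabla U(x)\cdot x\ge c_3|x|^2-c_4$, provided $c_3$ is large enough relative to $c_1$.

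The one point I would be careful about when instantiating this for a concrete $U$ is the balance between the two growth controls: since the right-hand side of \eqref{HypU-lya} genuinely carries $\tfrac\lambda4|x|^2$, one must ensure the outward push $\nabla U(x)\cdot x$ is quadratic with a sufficiently large constant while the upper growth of $U$ does not overwhelm it. This quantitative confinement is exactly what makes the hypothesis, combined with the reflection/synchronous coupling outlined in the introduction, strong enough to yield the announced contraction without any convexity assumption.
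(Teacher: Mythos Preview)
Your diagnosis is correct: the \texttt{proposition} environment in this paper is redefined to print as \textbf{Assumption}, so item~\ref{HypU} is a standing hypothesis on $U$ and the paper offers no proof of it, only the illustrative Example~\ref{double_puits} and the consequence recorded in Lemma~\ref{minU}. Your additional commentary on what \eqref{HypU-lya} encodes and your verification that the class is non-empty are accurate and in the same spirit as the paper's surrounding discussion.
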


 The condition \eqref{HypU-lya} implies that the force $-\nabla U$ has a confining effect, bringing back particles toward some compact set.   It implies the following:
 \begin{lemma}\label{minU}
Provided \eqref{HypU-lya}, there exists $\tilde A\geq 0$ such that for all $x\in\R^d$,
\begin{equation}\label{tilde_A}
 U\left(x\right)\geq\frac{\lambda}{6}|x|^2-\tilde{A}.
\end{equation}
\end{lemma}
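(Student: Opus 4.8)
The plan is to reduce everything to a one–dimensional statement along rays and then integrate a Gr\"onwall-type differential inequality. Fix $x\in\mathbb R^d$, $x\neq 0$ (the case $x=0$ is trivial since $U(0)\ge 0\ge -\tilde A$ for any $\tilde A\ge 0$), and set $G(t):=U(tx)$ for $t>0$, which is $\mathcal C^1$ with $G'(t)=\nabla U(tx)\cdot x$. The crucial observation is that $\nabla U(tx)\cdot(tx)=t\,G'(t)$, so evaluating \eqref{HypU-lya} at the point $tx$ rewrites exactly as the scalar inequality $\tfrac{t}{2}G'(t)\ge \lambda\bigl(G(t)+\tfrac{t^2|x|^2}{4}\bigr)-A$ for all $t>0$, i.e. $G'(t)-\tfrac{2\lambda}{t}G(t)\ge \tfrac{\lambda|x|^2}{2}\,t-\tfrac{2A}{t}$.

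Next I would multiply by the integrating factor $t^{-2\lambda}>0$, turning the left-hand side into $\tfrac{d}{dt}\bigl(t^{-2\lambda}G(t)\bigr)$, so that $\tfrac{d}{dt}\bigl(t^{-2\lambda}G(t)\bigr)\ge \tfrac{\lambda|x|^2}{2}\,t^{1-2\lambda}-2A\,t^{-2\lambda-1}$. Integrating this over the fixed interval $[\tfrac12,1]$ (where everything is smooth and bounded) gives $G(1)-2^{2\lambda}G(\tfrac12)\ge \tfrac{\lambda|x|^2}{2}\int_{1/2}^1 t^{1-2\lambda}\,dt-2A\int_{1/2}^1 t^{-2\lambda-1}\,dt$. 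Since $U\ge 0$ we have $G(\tfrac12)=U(x/2)\ge 0$, so we may drop the term $2^{2\lambda}G(\tfrac12)$ and obtain $U(x)=G(1)\ge \tfrac{\lambda|x|^2}{2}\int_{1/2}^1 t^{1-2\lambda}\,dt-2A\int_{1/2}^1 t^{-2\lambda-1}\,dt$. The second integral is the elementary constant $\tfrac{1}{2\lambda}(2^{2\lambda}-1)$, so it only contributes to $\tilde A:=\tfrac{A}{\lambda}(2^{2\lambda}-1)\ge 0$. For the first integral I would note that $\lambda\mapsto\int_{1/2}^1 t^{1-2\lambda}\,dt$ is nondecreasing (the integrand's $\lambda$-derivative is $-2(\ln t)t^{1-2\lambda}\ge 0$ on $[\tfrac12,1]$), hence $\int_{1/2}^1 t^{1-2\lambda}\,dt\ge \int_{1/2}^1 t\,dt=\tfrac38>\tfrac13$ for every $\lambda>0$; therefore $\tfrac{\lambda|x|^2}{2}\int_{1/2}^1 t^{1-2\lambda}\,dt\ge \tfrac{3\lambda}{16}|x|^2\ge\tfrac{\lambda}{6}|x|^2$, which yields \eqref{tilde_A}.

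The only genuine subtlety — and the step I would treat most carefully — is the choice of the interval of integration. It is tempting to integrate down to $t=0$ (which would be the natural "ray from the origin" argument), but both the integrating factor $t^{-2\lambda}$ and the $2A/t$ term are singular at $0$, and there is no way to control $\lim_{t\to 0}t^{-2\lambda}U(tx)$ without extra information such as $U(0)=0$. Integrating instead over a compact subinterval bounded away from $0$ circumvents this entirely, at the modest cost of carrying the boundary term $2^{2\lambda}U(x/2)$, which is harmless precisely because $U\ge 0$ lets us discard it. Everything else is a routine computation; one should just keep track of constants to confirm the clean $\lambda/6$ (the argument in fact gives the slightly better $3\lambda/16$, so there is room to spare), and, if desired, an entirely equivalent presentation uses the substitution $s\mapsto U(e^{-s}x)$ on $[0,1]$ in place of the ray parametrization.
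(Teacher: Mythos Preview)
Your proof is correct and takes a genuinely different route from the paper's. Both arguments reduce to a one-dimensional statement along rays, but the integration steps diverge. The paper picks the threshold $x_0=\sqrt{4A/\lambda}$ at which the quadratic term in \eqref{HypU-lya} absorbs the constant $A$, writes $U(x_0+x)-U(x_0)=\int_0^1 U'(x_0+tx)x\,dt$, and lower-bounds the integrand directly using $U\ge 0$; this yields the estimate for $|x|\ge x_0$, and the region $|x|\le x_0$ is handled separately by continuity on a compact set (with an extra remark to ensure the bound on $U(x_0)$ is uniform over all directions). Your approach instead treats the hypothesis as a linear ODE inequality in the ray parameter, applies the integrating factor $t^{-2\lambda}$, and integrates over the fixed window $[\tfrac12,1]$; the boundary contribution at $t=\tfrac12$ is discarded thanks to $U\ge 0$. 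What you gain is a single clean computation that covers all $x$ at once, produces an explicit $\tilde A=\tfrac{A}{\lambda}(2^{2\lambda}-1)$, and sidesteps both the compact-set argument and the direction-uniformity check. The paper's version, by contrast, is slightly more elementary in that it avoids the integrating factor, at the cost of the two-region split.
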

The proof is postponed to Appendix~\ref{preuve_min_U}.  In particular,  it implies that $U$ goes to infinity at infinity and is bounded below.  Since only the gradient of $U$ is involved in the dynamics, the condition $U\geqslant 0$ is thus not restrictive as it can be enforced without loss of generality by adding a sufficient large constant to $U$. This condition is added  in order to simplify some calculations.

\begin{proposition}\label{HypU_lip}
There is a constant $L_U>0$ such that 
\begin{equation*}
\forall x,y\in\mathbb{R}^d\times\mathbb{R}^d\,,\qquad |\nabla U\left(x\right)-\nabla U\left(y\right)|\leq L_U|x-y|.
\end{equation*}
\end{proposition}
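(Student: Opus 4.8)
The statement labelled here is an \emph{assumption}: it is a standing hypothesis imposed on the confining potential $U$, not an assertion derived from the previous ones, so, strictly speaking, there is nothing to prove. It is simply postulated — together with Assumption~\ref{HypU} — to delimit the class of potentials for which the contraction and propagation-of-chaos results will hold. It is worth stressing that this hypothesis is compatible with, and complementary to, \eqref{HypU-lya}: the latter forces $U$ to grow at least quadratically (Lemma~\ref{minU}), i.e. it controls $U$ and $\nabla U$ from below at infinity, while the Lipschitz bound controls the \emph{oscillation} of $\nabla U$ from above, uniformly on $\mathbb{R}^d$. In particular it yields the linear growth estimate $|\nabla U(x)|\leq|\nabla U(0)|+L_U|x|$ for all $x$, which is the form in which it will actually be invoked when bounding drift terms in the synchronous and reflection couplings and in the Lyapunov estimates of Sections~\ref{Preuve_1} and~\ref{Preuve_2}.

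If, rather than postulating it, one wanted to \emph{verify} this hypothesis for a concrete potential, the plan would be the following. When $U\in\mathcal{C}^2(\mathbb{R}^d)$: first establish a uniform bound on the Hessian, $\sup_{x\in\mathbb{R}^d}\left\|\nabla^2 U(x)\right\|\leq L_U$; then conclude by the fundamental theorem of calculus along the segment joining $x$ and $y$, namely
\[
\nabla U(x)-\nabla U(y)=\int_0^1\nabla^2 U\left(y+t(x-y)\right)(x-y)\,dt,
\]
so that $|\nabla U(x)-\nabla U(y)|\leq L_U|x-y|$. When $U$ is only $\mathcal{C}^1$, the same argument is run on the open regions where $\nabla U$ is locally Lipschitz and then patched across the gluing locus, using continuity of $\nabla U$ to pass to the closure.

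The only real ``obstacle'' is that a \emph{global} Lipschitz constant is genuinely restrictive: combined with Lemma~\ref{minU} it pins $U$ down to exactly quadratic growth, so super-quadratic confinements are excluded. This is precisely the reason for the variant treated in the appendix, where $U$ is assumed merely locally Lipschitz and the constant $L_U$ is replaced by a radius-dependent one, the confinement \eqref{HypU-lya} being then used to keep the coupled processes in a region where the local constant suffices to close the estimates.
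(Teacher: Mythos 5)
You are correct: the environment is declared by \verb|\newtheorem{proposition}{Assumption}|, so the labelled statement prints as an Assumption and carries no proof in the paper; it is a standing hypothesis on the confining potential, and your reading is exactly right. Your side remarks are accurate as well: together with Lemma~\ref{minU} the global Lipschitz bound on $\nabla U$ pins $U$ to quadratic growth, and the appendix variant (Assumption~\ref{HypU_loc_lip}, with the additive $\psi(x)+\psi(y)$ term dominated by $\sqrt{\lambda|x|^2+24U(x)}$) is precisely how the paper relaxes this to admit superquadratic confinements.
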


\begin{example}\label{double_puits}
The  double-well potential given by 
\begin{equation*}
U\left(x\right)=\left\{
    \begin{array}{ll}
        \left(x^2-1\right)^2&\text{ if }|x|\leq1,\\
        \left(|x|-1\right)^2&\text{ otherwise. }
    \end{array}
\right.
\end{equation*}
satisfies Assumptions~\ref{HypU} and \ref{HypU_lip}.
\end{example}


\begin{proposition}\label{HypW}
The potential $W$ is  even, i.e. $W\left(x\right)=W\left(-x\right)$ for 
all $x\in\R^d$, in particular $\nabla W\left(0\right)=0$. Moreover, there exists $ L_W<\lambda/8$ (where $\lambda$ is given in Assumption~\ref{HypU}) such that
\begin{equation}\label{HypW-lip}
\forall x,y\in\mathbb{R}^d\times\mathbb{R}^d,\qquad |\nabla W\left(x\right)-\nabla W\left(y\right)|\leq L_W |x-y|.
\end{equation}
In particular $|\nabla W\left(x\right)|\leq L_W |x|$ for all $x\in\R^d$.
\end{proposition}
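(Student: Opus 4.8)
The plan is simply to justify the two ``in particular'' assertions contained in the statement, the rest being a hypothesis imposed on the interaction potential $W$. For the first, since $W\in\mathcal C^1(\R^d)$ we may differentiate the evenness relation $W(x)=W(-x)$; the chain rule gives $\nabla W(x)=-\nabla W(-x)$ for every $x\in\R^d$, and evaluating at $x=0$ forces $\nabla W(0)=-\nabla W(0)$, hence $\nabla W(0)=0$. In other words, evenness---the physically natural requirement for a pair interaction---is what we genuinely assume, and the vanishing of $\nabla W$ at the origin is a derived fact.

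For the second assertion, fix $x\in\R^d$ and apply the Lipschitz estimate \eqref{HypW-lip} with $y=0$: using $\nabla W(0)=0$,
\[
|\nabla W(x)|=|\nabla W(x)-\nabla W(0)|\le L_W|x-0|=L_W|x|.
\]

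No real obstacle arises here: both points are immediate once the regularity $W\in\mathcal C^1$ (needed for the chain rule in the first step) is combined with the evenness and Lipschitz continuity already assumed. The only thing worth stressing is that it is precisely this linear growth bound $|\nabla W(x)|\le L_W|x|$, rather than the Lipschitz property itself, that will be invoked repeatedly in the sequel --- in particular to control the nonlinear term $\nabla W\ast\mu_t$ of \eqref{FP} inside the forthcoming Lyapunov and coupling estimates --- which is why it is recorded already at this stage.
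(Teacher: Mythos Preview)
Your proposal is correct. Note that in the paper this statement is an \emph{Assumption} (the environment \texttt{proposition} is declared with display name ``Assumption''), so there is no separate proof in the paper; the two ``in particular'' claims are immediate consequences recorded inline, and your justifications---differentiating the evenness relation at the origin, then applying \eqref{HypW-lip} with $y=0$---are exactly the intended ones.
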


Here we consider an interaction force that is the gradient of a potential 
$W$, as we stick to the formalism of other related works (for instance \cite{Uniform_Prop_Chaos}). Nevertheless, all the results and proofs still holds if $\nabla W$ is replaced by some $F:\R^d\mapsto \R^d$ satisfying the same conditions. The confinement potential may also be non gradient, however the fact that the confinement force $\nabla U$ is a gradient simplifies the construction of a Lyapunov function.

The condition $L_W\leq\lambda/8$ is related to the fact the interaction is considered as a perturbation of the non-interacting process studied in \cite{Eberle_Guillin_Zimmer_Langevin}. Therefore, $\nabla W$ has to be controlled by $\nabla U$ in some sense. Note that we immediately get the following bound  on the non-linear drift:


\begin{lemma}\label{majNablaW}
Under Assumption~\ref{HypW}, for all probability measures $\mu$ and $\nu$ 
on $\mathbb{R}^d$ and $x,\tilde{x}\in \mathbb{R}^d$,
\begin{equation*}
 |\nabla W\ast\mu\left(x\right)-\nabla W\ast\nu\left(\tilde{x}\right)|\leq L_W|x-\tilde{x}|+L_W\mathcal{W}_1(\mu,\nu).
\end{equation*}
\end{lemma}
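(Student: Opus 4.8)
The plan is to reduce the estimate to the Lipschitz hypothesis \eqref{HypW-lip} via the Kantorovich description of $\mathcal{W}_1$. If $\mathcal{W}_1(\mu,\nu)=+\infty$ there is nothing to prove, so I would assume it is finite. Fix $\varepsilon>0$ and pick a coupling $\Gamma\in\Pi(\mu,\nu)$ with $\int_{\R^d\times\R^d}|y-\tilde y|\,\Gamma(dyd\tilde y)\le\mathcal{W}_1(\mu,\nu)+\varepsilon$; such a $\Gamma$ exists directly from the definition of the infimum, so that no appeal to the existence of optimal transport plans is needed.

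Next, using that $\Gamma$ has marginals $\mu$ and $\nu$, I would write
\begin{equation*}
\nabla W\ast\mu(x)-\nabla W\ast\nu(\tilde x)=\int_{\R^d\times\R^d}\bigl(\nabla W(x-y)-\nabla W(\tilde x-\tilde y)\bigr)\,\Gamma(dyd\tilde y),
\end{equation*}
then bound the integrand, using \eqref{HypW-lip} followed by the triangle inequality, by
\begin{equation*}
|\nabla W(x-y)-\nabla W(\tilde x-\tilde y)|\le L_W\bigl|(x-\tilde x)-(y-\tilde y)\bigr|\le L_W|x-\tilde x|+L_W|y-\tilde y|.
\end{equation*}
Integrating this against $\Gamma$ yields $|\nabla W\ast\mu(x)-\nabla W\ast\nu(\tilde x)|\le L_W|x-\tilde x|+L_W(\mathcal{W}_1(\mu,\nu)+\varepsilon)$, and letting $\varepsilon\to0$ gives the claimed bound. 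The ``in particular'' assertion $|\nabla W(x)|\le L_W|x|$ is the special case $\mu=\nu=\delta_0$, $\tilde x=0$, together with $\nabla W(0)=0$.

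There is no real obstacle here: the statement is essentially the Lipschitz hypothesis on $\nabla W$ transported through a (near-)optimal coupling of $\mu$ and $\nu$. The only minor points worth keeping in mind are to separate off the trivial case $\mathcal{W}_1(\mu,\nu)=+\infty$ and to argue with $\varepsilon$-optimal couplings (or, equivalently, to invoke the existence of an optimal plan and take $\varepsilon=0$), so that the argument stays self-contained.
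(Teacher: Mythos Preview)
Your proof is correct and follows the same route as the paper: write the difference as an integral against a coupling of $\mu$ and $\nu$, apply the Lipschitz bound \eqref{HypW-lip} and the triangle inequality, then take the infimum over couplings. The only cosmetic differences are that you phrase the final step with an $\varepsilon$-optimal coupling, and you add the (harmless but unnecessary) remark about the ``in particular'' clause, which in the paper is already part of Assumption~\ref{HypW} rather than of the lemma.
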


See Appendix~\ref{preuve_majNablaW} for the proof.

\begin{example}
Assumption~\ref{HypW} is satisfied for an harmonic interaction $W(x) = \pm L_W|x|^2/2$, or
a  mollified Coulomb interaction for $a,b>0$ and $k\in\mathbb{N}^*$
\begin{equation*}
W\left(x\right)=\pm \frac{a}{\left(|x|^k+b^k\right)^{\frac{1}{k}}},
\ \ \ \text{ i.e }\ \ \ 
\nabla W\left(x\right)=\mp \frac{a x|x|^{k-2}}{\left(|x|^k+b^k\right)^{1+\frac{1}{k}}}.
\end{equation*}
\end{example}

We are also interested in cases where, instead of Assumption~\ref{HypU_lip}, $\nabla U$ is only assumed to be locally Lipschitz continuous.

\begin{proposition}\label{HypU_loc_lip}
There exist  $L_U>0$ and a function a function $\psi : \mathbb{R}^d\mapsto\mathbb{R}$    such that
\begin{equation*}
\forall x,y\in\mathbb{R}^d\times\mathbb{R}^d,\qquad \ |\nabla U\left(x\right)-\nabla U\left(y\right)|\leq\left(L_U+\psi\left(x\right)+\psi\left(y\right)\right)|x-y|,
\end{equation*}
and
\begin{equation*}
\forall x\in\mathbb{R}^d,\qquad 0\leq\psi(x)\leq L_\psi \sqrt{\lambda|x|^2+24U(x)},
\end{equation*}
where  $L_\psi>0$ is sufficiently small in the sense that
\begin{align*}
L_\psi\leq c_\psi(L_U, \lambda,\tilde{A},d,a),
\end{align*}
where $c_\psi$ is an explicit function given below in \eqref{c_psi}, $L_U$ is given in Assumption~\ref{HypU_lip}, $\lambda$ by Assumption~\ref{HypU}, $\tilde{A}$ by Assumption~\ref{HypU}  Lemma~\ref{minU} and $d$ is the dimension. Finally, $a$ is a used to bound an initial moment (see \eqref{init_cdt_loc_lip}).
\end{proposition}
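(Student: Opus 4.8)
Assumption~\ref{HypU_loc_lip} is a hypothesis delimiting the class of confining potentials handled by the locally-Lipschitz version of our results, and as such it carries no proof of its own; what I would establish here is that this class is non-empty, that it is consistent with Assumptions~\ref{HypU} and~\ref{HypW}, and that it strictly contains potentials not covered by the globally-Lipschitz Assumption~\ref{HypU_lip}, together with a description of how the threshold $c_\psi$ is obtained. The first point worth recording is that the bound imposed on $\psi$ is a \emph{growth} constraint on the local Lipschitz modulus of $\nabla U$: by Lemma~\ref{minU} one has $\lambda|x|^2 + 24U(x) \geq 4\lambda|x|^2 - 24\tilde A$, so $L_\psi\sqrt{\lambda|x|^2 + 24U(x)}$ is, up to multiplicative constants, of order $L_\psi(|x| + \sqrt{U(x)})$; thus $\nabla U$ is allowed a local Lipschitz modulus growing no faster than $\sqrt{U}$, and $L_\psi$ quantifies the size of that modulus relative to the confinement.

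For non-emptiness, I would verify the assumption on $U(x) = |x|^2 + \varepsilon|x|^4$, with $\varepsilon>0$ small (mild multi-well variants being analogous). Here $U \geq 0$, and since the quadratic part dominates near the origin and the quartic part at infinity, Assumption~\ref{HypU} holds with constants $\lambda$, $A$, and --- through Lemma~\ref{minU} --- $\tilde A$ that may be chosen \emph{independently of $\varepsilon$}, the crucial point being that the fixed quadratic floor $|x|^2$ pins down the lower bound~\eqref{tilde_A}. On the other hand $\|\nabla^2 U(x)\| \leq 2 + C\varepsilon|x|^2$, so one may take $L_U = 2$ and $\psi(x) = C\varepsilon|x|^2$, while $U(x) \geq \varepsilon|x|^4$ gives $\sqrt{\lambda|x|^2 + 24U(x)} \geq \sqrt{24\varepsilon}\,|x|^2$; comparing, $\sup_{x}\psi(x)/\sqrt{\lambda|x|^2 + 24U(x)} = O(\sqrt{\varepsilon})$ (the supremum being approached as $|x|\to\infty$), so the assumption holds with $L_\psi = O(\sqrt{\varepsilon})$. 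Since $c_\psi$ depends only on the $\varepsilon$-independent data $L_U,\lambda,\tilde A,d,a$, choosing $\varepsilon$ small enough forces $L_\psi \leq c_\psi$. As $\nabla U$ is not globally Lipschitz here, this simultaneously shows that Assumption~\ref{HypU_loc_lip} covers potentials lying outside Assumption~\ref{HypU_lip}.

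The only genuinely delicate point is to pin down the threshold $c_\psi(L_U,\lambda,\tilde A,d,a)$. It is not intrinsic to $U$: it is dictated by the downstream Lyapunov estimate, in which the locally-Lipschitz modulus $\psi$ enters the generator of the dynamics applied to the Lyapunov function $H$ through an error term of order $L_\psi(\lambda|x|^2 + 24U(x))$, which must be absorbed into the strictly negative contribution $-\kappa(\lambda|x|^2 + 24U(x))$ produced by the $\lambda$-Lyapunov inequality of Assumption~\ref{HypU} --- together with the dimensional term $d$ stemming from the Laplacian, the constant $\tilde A$ of Lemma~\ref{minU}, and the control of the initial moment governed by $a$. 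Carrying out this bookkeeping, i.e.\ writing the chain of inequalities carefully and checking that it leaves a strictly positive margin, produces the explicit function $c_\psi$ promised in the statement and is the main, essentially computational, obstacle; I would present it in Section~\ref{Preuve_1} and its appendix, alongside the Lyapunov construction.
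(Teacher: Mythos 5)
You have correctly identified that this ``statement'' is an \emph{assumption} rather than a theorem; the paper contains no proof of it, and none is required. What you offer instead is a sanity check that the class it delimits is non-empty and that the threshold $c_\psi$ is determined downstream, and that discussion is broadly sound: the example $U(x)=|x|^2+\varepsilon|x|^4$ does satisfy Assumption~\ref{HypU} with $\lambda,A,\tilde A$ independent of $\varepsilon$ (one may take $\lambda=4/5$, $A=0$, hence $\tilde A=0$), the Hessian bound gives $\psi(x)=O(\varepsilon|x|^2)$, and $U\geq\varepsilon|x|^4$ forces $L_\psi=O(\sqrt\varepsilon)$, so smallness follows for $\varepsilon$ small. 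One imprecision worth flagging: you say $\psi$ enters through ``the generator of the dynamics applied to $H$''; in the paper's Appendix~\ref{Preuve_loc_lip} the term $(\psi(X_t)+\psi(\tilde X_t))|Z_t|f'(r_t)G_t$ actually comes from the \emph{drift difference} $e_t\cdot(\nabla U(X_t)-\nabla U(\tilde X_t))$ in the coupled evolution of $|Q_t|$, and is absorbed against the Lyapunov decay $-\epsilon\frac\gamma8\bigl(H e^{a\sqrt H}+\tilde H e^{a\sqrt{\tilde H}}\bigr)f(r_t)$ in $K_t$; the resulting conditions $\frac{L_\psi}{2\alpha}\leq\epsilon\frac\gamma{16}$ and $\frac{4L_\psi\epsilon}{a\alpha}\leq\epsilon\frac\gamma{16}$ are what produce \eqref{c_psi}. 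The identity of the mechanism (absorption into the negative Lyapunov term, contingent on the initial exponential moment governed by $a$) is as you describe, only its location in the argument is slightly different.
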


The first of our main results concern the long-time convergence of the non-linear system \eqref{Liouville}. 


\begin{theorem}\label{thm_1}
Let $U\in\mathcal{C}^1\left(\mathbb{R}^{d}\right)$ satisfy Assumption~\ref{HypU} and either Assumption~\ref{HypU_lip} or Assumption~\ref{HypU_loc_lip}). There is an explicit   $c^W>0$ such that, for all $W\in\mathcal{C}^1\left(\mathbb{R}^{d}\right)$ satisfying Assumption~\ref{HypW} with $L_W<c^W$, there is an explicit   $\tau>0$  such that for all probability measures $\nu^1_0$ and $\nu^2_0$ on $\mathbb{R}^{2d}$ with either a finite second moment (if Assumption~\ref{HypU_lip} holds) or a finite Gaussian moment (if only Assumption~\ref{HypU_loc_lip} holds), there are explicit constants  $C_1 ,C_2>0$ such that for all $t\geqslant 0$,
\[  \mathcal{W}_{1}\left(\nu^1_t,\nu^2_t\right) \leq e^{-\tau t}C_1\,,\qquad
 \mathcal{W}_{2}\left(\nu^1_t,\nu^2_t \right)  \leq e^{-\tau t}C_2\]
where $\nu^1_t$ and $\nu^2_t$ are solutions of \eqref{Liouville} with respective initial distributions $\nu^1_0$ and $\nu^2_0$.

In particular, we have existence and unicity of -as well as convergence towards - a stationary solution.

\end{theorem}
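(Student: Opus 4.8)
The plan is to prove the contraction through an explicit coupling of two non-linear diffusions, following \cite{Eberle_Guillin_Zimmer_Langevin}, and then to tune the free parameters $\alpha,\epsilon$ and the profile $f$ so that the functional $\rho_t = f(r_t)\bigl(1+\epsilon H(X_t,V_t)+\epsilon H(X_t',V_t')\bigr)$, with $r_t=\alpha|Z_t|+|Z_t+W_t|$ and $(Z_t,W_t)=(X_t-X_t',V_t-V_t')$, decays exponentially in expectation. I treat in detail the Lipschitz case (Assumption~\ref{HypU_lip}) and indicate the changes for Assumption~\ref{HypU_loc_lip} at the end. Let $(X_t,V_t)$ and $(X_t',V_t')$ solve \eqref{FP} with initial laws $\nu_0^1,\nu_0^2$; I would couple their driving Brownian motions $B,B'$ so that they are synchronous in the directions orthogonal to $e_t=(Z_t+W_t)/|Z_t+W_t|$ and, in the direction $e_t$, interpolate via a Lipschitz switch $\mathrm{rc}(r_t)\in[0,1]$ — equal to $1$ away from the diagonal $\{z+w=0\}$ and vanishing near it — between a reflection ($\mathrm{rc}=1$) and a synchronous coupling ($\mathrm{rc}=0$). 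By Lévy's characterization both marginals still solve \eqref{FP}, and $Z_t+W_t$ then carries a one-dimensional radial noise of variance $8\,\mathrm{rc}(r_t)^2$ while $Z_t$ carries none. As recalled in Section~\ref{sec:step_one}, $dZ_t=W_t\,dt$ and $dW_t=(-W_t-\Delta U_t-\Delta W_t)\,dt+\sqrt{2}(dB_t-dB_t')$ with $\Delta U_t=\nabla U(X_t)-\nabla U(X_t')$ and $\Delta W_t=\nabla W\ast\mu_t(X_t)-\nabla W\ast\mu_t'(X_t')$; using $Z_t\cdot W_t=Z_t\cdot(Z_t+W_t)-|Z_t|^2$, Assumptions~\ref{HypU_lip} and \ref{HypW}, Lemma~\ref{majNablaW}, and handling $\{Z_t=0\}\cup\{Z_t+W_t=0\}$ by the usual approximation arguments, one gets
\[
dr_t\le\Bigl[\alpha|Z_t+W_t|+(L_U+L_W-\alpha)|Z_t|+L_W\,\mathcal{W}_1(\mu_t,\mu_t')\Bigr]dt+2\sqrt{2}\,\mathrm{rc}(r_t)\,d\beta_t
\]
for a one-dimensional Brownian motion $\beta$, with $\mathcal{W}_1(\mu_t,\mu_t')\le\mathbb E|Z_t|$ by the coupling.

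Next I would introduce a Lyapunov function $H(x,v)=U(x)+\tfrac12|v+bx|^2+c_0|x|^2$ (with $b$ small and $c_0$ suitably chosen), for which Assumption~\ref{HypU}, Lemma~\ref{minU} and Assumption~\ref{HypW} give, uniformly in $t$, a coercivity bound $H(x,v)\ge c(|x|^2+|v|^2)-C$ and a Lyapunov bound $\mathcal L_tH\le-c_HH+C_H$ for the time-dependent generator of \eqref{FP} (the interaction term being absorbed thanks to $L_W<\lambda/8$ and uniform-in-time moment bounds on $(X_t,V_t)$, which follow). I would then take $f$ continuous, non-decreasing, concave, $C^2$ on $(0,\infty)$ with $f(0)=0$ and $f'$ bounded above and away from $0$ on the relevant compact, solving an Eberle-type ODE so that, on the reflection region $\{\mathrm{rc}=1\}$, the negative term $4f''(r)$ dominates $\alpha rf'(r)$. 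Applying Itô's formula to $\rho_t$ — combining the bound on $dr_t$ above, the contribution $4f''(r_t)\mathrm{rc}(r_t)^2$ of the radial noise, the bound $\mathcal L_t(H+H')\le-c_H(H+H')+2C_H$, and the (controlled) cross-variation terms — should yield $\frac{d}{dt}\mathbb E[\rho_t]\le-\tau\,\mathbb E[\rho_t]$, provided $\alpha$ is large compared with $L_U+L_W$, $\epsilon$ is small, and $L_W<c^W$ for an explicit threshold $c^W$ (needed in particular to absorb the non-linear term $L_W\mathcal{W}_1(\mu_t,\mu_t')\le L_W\mathbb E|Z_t|$). I expect \emph{this} to be the main obstacle: the inequalities pinning down $\alpha$, $\epsilon$, the shape of $f$ and $c^W$ pull in opposite directions, so the core of the work is to exhibit one explicit admissible choice of all parameters simultaneously — which, as the authors announce, is organised by fixing specific parameters up front and verifying the constraints along the way.

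From $\frac{d}{dt}\mathbb E[\rho_t]\le-\tau\,\mathbb E[\rho_t]$, Grönwall's lemma gives $\mathbb E[\rho_t]\le e^{-\tau t}\mathbb E[\rho_0]$, the latter being finite and controlled by the second (resp.\ Gaussian) moments of $\nu_0^1,\nu_0^2$. Since $f(r)\gtrsim\min(r,1)$ and, by coercivity of $H$, $r_t$ large forces $H(X_t,V_t)+H(X_t',V_t')\gtrsim r_t^2$, one gets $\rho_t\gtrsim r_t$, whence $\mathcal{W}_1(\nu_t^1,\nu_t^2)\le\mathbb E[|Z_t|+|W_t|]\le C\,\mathbb E[r_t]\le C_1e^{-\tau t}$. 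For $\mathcal{W}_2$ I would interpolate using the uniform-in-time higher moments, e.g.\ $\mathbb E[r_t^2]\le(\mathbb E r_t)^{1/2}(\mathbb E r_t^3)^{1/2}\le C\,e^{-\tau t/2}$, so that $\mathcal{W}_2(\nu_t^1,\nu_t^2)\le C_2e^{-\tau t/4}$, after which one relabels the rate. For the stationary solution: applying the $\mathcal{W}_1$ bound to the solution issued from a fixed $\nu_0$ at times $n$ and $n+m$ — the constants remaining bounded by the uniform moment estimates — shows $(\mathrm{Law}(X_n,V_n))_n$ is Cauchy in $\mathcal{W}_1$; its limit $\nu_\infty$ is stationary by passing to the limit in \eqref{Liouville}, and uniqueness and global exponential convergence are then immediate consequences of the contraction.

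Under Assumption~\ref{HypU_loc_lip} alone, the scheme is unchanged except that $|\Delta U_t|\le(L_U+\psi(X_t)+\psi(X_t'))|Z_t|$; the extra $\psi$ contributions are absorbed using $\psi(x)\le L_\psi\sqrt{\lambda|x|^2+24U(x)}$ together with a uniform-in-time Gaussian moment bound for the solution (hence the finite Gaussian moment required of the initial data), valid as soon as $L_\psi\le c_\psi$. These modifications, together with the technical lemmas, are deferred to the appendix.
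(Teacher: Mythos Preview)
Your overall strategy --- reflection/synchronous coupling switched by a Lipschitz cutoff, the twisted distance $r=\alpha|z|+|z+w|$, a quadratic-type Lyapunov function $H$, the semimetric $\rho=f(r)(1+\epsilon H+\epsilon H')$, an Eberle-type concave $f$, a region splitting, and a smallness condition on $L_W$ to absorb the non-linear term --- is exactly the paper's. Two points deserve comment.

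First, a minor one: the switch must be a function of $(Z_t,W_t)$, not of $r_t$ alone. It has to vanish both near $\{z+w=0\}$ (so that $e_t$ is well defined) \emph{and} for $r_t\ge R_1+\xi$; the regularisation parameter $\xi>0$ survives in the bound on $K_t$ and is sent to $0$ only after Gr\"onwall. You will not get the clean differential inequality $\frac{d}{dt}\mathbb E[\rho_t]\le-\tau\mathbb E[\rho_t]$ you announce, but rather an integral inequality carrying a $\xi$-term and transient $e^{-\gamma t}$-terms coming from the moment bounds; this still yields the conclusion.

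Second, and more substantively, your route to the $\mathcal{W}_2$ bound has a gap. Interpolating via $\mathbb E[r_t^2]\le(\mathbb E r_t)^{1/2}(\mathbb E r_t^3)^{1/2}$ requires uniform-in-time \emph{third} moments. Under Assumption~\ref{HypU_lip} the Lyapunov function $H$ is at most quadratic (since $\nabla U$ is globally Lipschitz), so the bound $\mathbb E H(X_t,V_t)\le B/\gamma+\dots$ only controls second moments; third moments are not propagated from an initial second moment. The paper avoids this entirely: by Lemma~\ref{Prop-H} one has $r^2\le C\bigl(H(x,v)+H(x',v')\bigr)$, so for $r\ge 1$ this gives $r^2\le C\epsilon^{-1}G$ and hence $\rho=f(r)G\ge f(1)G\ge c\,r^2$, while for $r\le 1$ one has $r^2\le r\le f(r)/f'_-(R_1)\le\rho/f'_-(R_1)$. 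Thus $|z|^2+|w|^2\le\mathcal{C}_2\,\rho$ \emph{pointwise} (Lemma~\ref{rho_1_2}), and $\mathcal{W}_2^2\le\mathcal{C}_2\mathcal{W}_\rho$ follows directly from the contraction of $\mathcal{W}_\rho$, with no interpolation and no higher moments. This is the structural payoff of \emph{multiplying} $f(r)$ by $1+\epsilon H+\epsilon H'$: the Lyapunov factor $G$ upgrades the control $f(r)\asymp\min(r,1)$ to a control of $r^2$.
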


The second of our main results is a uniform in time convergence as $N\rightarrow +\infty$ of \eqref{FP_part} toward \eqref{FP}. 

\begin{theorem}\label{thm_2}
Let $\tilde{\mathcal{C}}^0>0$ and $\tilde{a}>0$. Let $U\in\mathcal{C}^1\left(\mathbb{R}^{d}\right)$ 
satisfy Assumptions~\ref{HypU} and  \ref{HypU_lip}. There is an explicit  
$c^W>0$ such that, for all $W\in\mathcal{C}^1\left(\mathbb{R}^{d}\right)$ 
satisfying Assumption~\ref{HypW} with $L_W<c^W$, there exist explicit ${B_1,B_2>0}$,  such that for all probability measures $\nu_0$  on $\mathbb{R}^{2d}$ satisfying ${\mathbb{E}_{\nu_0}\left(e^{\tilde{a}\left(|X|+|V|\right)}\right)\leq\tilde{\mathcal{C}^0}}$ , 
\[
 \mathcal{W}_{1}\left(\nu^{k,N}_t,\bar{\nu}_t^{\otimes k}\right)\leq  \frac{k B_1}{\sqrt{N}}  \,,\qquad 
 \mathcal{W}_{2}^2\left(\nu^{k,N}_t,\bar{\nu}_t^{\otimes k}\right)\leq \frac{k B_2}{\sqrt{N}} ,
\]
for all $k\in\mathbb{N}$, where $\nu^{k,N}_t$  is the marginal distribution at time $t$ of the first $k$ particles $\left((X^1_t,V^1_t),....,(X^k_t,V^k_t)\right)$ of an $N$ particle system \eqref{FP_part}  with initial distribution $(\nu_0)^{\otimes N}$, while $\bar \nu_t$ is a solution of \eqref{Liouville} with  initial distribution $\nu_0$.
\end{theorem}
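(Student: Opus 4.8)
The plan is to run the reflection/synchronous coupling of the last section simultaneously on two levels: couple the $N$-particle system \eqref{FP_part} with a system of $N$ \emph{independent} copies $(\bar X^i_t,\bar V^i_t)$ of the nonlinear process \eqref{FP}, all driven by the same Brownian motions $B^i$ (up to the reflection in the relevant direction), with the same initial data drawn from $\nu_0^{\otimes N}$. Writing $Z^i_t = X^i_t - \bar X^i_t$, $W^i_t = V^i_t - \bar V^i_t$, $r^i_t = \alpha|Z^i_t| + |Z^i_t+W^i_t|$, one forms the functional $\sum_{i=1}^N f(r^i_t)\bigl(1+\epsilon H(X^i_t,V^i_t)+\epsilon H(\bar X^i_t,\bar V^i_t)\bigr)$ with the \emph{same} parameters $f,\alpha,\epsilon$ produced in Section~\ref{Preuve_1}. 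Applying Itô's formula to each term, the computation is identical to the nonlinear case except for the drift discrepancy in the interaction term: particle $i$ feels $\frac1N\sum_j \nabla W(X^i_t-X^j_t)$ whereas $\bar X^i_t$ feels $\nabla W\ast\mu_t(\bar X^i_t)$ with $\mu_t=\mathrm{Law}(\bar X^i_t)$. The difference splits as usual into a \emph{coupling} part, bounded via Lemma~\ref{majNablaW} by $L_W|Z^i_t| + \frac{L_W}{N}\sum_j(|Z^j_t|+|W^j_t|)$ — already absorbed by the contraction budget since $L_W<c^W$ — and a \emph{fluctuation} part $\frac1N\sum_j \nabla W(\bar X^i_t-\bar X^j_t) - \nabla W\ast\mu_t(\bar X^i_t)$, which is the error term driving the $1/\sqrt N$ rate.

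The key steps, in order: (i) establish uniform-in-time exponential moment bounds $\sup_{t\ge0}\mathbb E\, e^{\tilde a(|X^i_t|+|V^i_t|)}\le \tilde{\mathcal C}$ and the same for $(\bar X^i_t,\bar V^i_t)$ — these follow from Assumption~\ref{HypU} (the Lyapunov estimate \eqref{HypU-lya}), Assumption~\ref{HypU_lip}, the bound $|\nabla W(x)|\le L_W|x|$, and the hypothesis on $\nu_0$; propagating an exponential moment along \eqref{FP_part} uniformly in $N$ is standard since the interaction is pairwise and Lipschitz. (ii) Control the fluctuation term: conditionally on $\bar X^i_t$ the variables $\nabla W(\bar X^i_t - \bar X^j_t)$, $j\ne i$, are i.i.d.\ with mean $\nabla W\ast\mu_t(\bar X^i_t)$ and second moment controlled by $L_W^2\mathbb E(|\bar X^i_t|+|\bar X^j_t|)^2$, so $\mathbb E\bigl|\frac1N\sum_{j}\nabla W(\bar X^i_t-\bar X^j_t)-\nabla W\ast\mu_t(\bar X^i_t)\bigr|^2 \le C/N$ uniformly in $t$ thanks to step (i). (iii) Combine: the drift of $\mathbb E\sum_i f(r^i_t)(1+\epsilon H+\epsilon H)$ is $\le -2\tau\,\mathbb E\sum_i(\cdots) + \sqrt N\cdot\frac{C}{\sqrt N}$ — i.e.\ a contraction at rate $2\tau$ with an additive source of size $O(\sqrt N)$, where one uses Cauchy–Schwarz on the cross term $f'(r^i_t)(1+\epsilon H(\cdots))\cdot(\text{fluctuation})$ together with boundedness of $f,f'$ and the moment bounds to control $\epsilon H$. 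Grönwall then gives $\mathbb E\sum_i f(r^i_t)(\cdots)\le e^{-2\tau t}\mathbb E\sum_i f(r^i_0)(\cdots) + C\sqrt N$; since the initial difference is zero (same initial data) the first term vanishes, yielding $\frac1N\mathbb E\sum_i f(r^i_t)(1+\cdots)\le C/\sqrt N$. (iv) Convert back: the equivalence between $\rho$ and the Euclidean cost (from Section~\ref{section_2}, using $f'$ bounded above and below away from $0$ and the moment bounds to handle the weight $1+\epsilon H$) gives $\mathbb E(|Z^i_t|^2+|W^i_t|^2)\le C/\sqrt N$, hence by exchangeability and subadditivity of $\mathcal W_2^2$ under product couplings, $\mathcal W_2^2(\nu^{k,N}_t,\bar\nu_t^{\otimes k})\le \sum_{i=1}^k \mathbb E(|Z^i_t|^2+|W^i_t|^2)\le kB_2/\sqrt N$, and similarly $\mathcal W_1(\nu^{k,N}_t,\bar\nu_t^{\otimes k})\le kB_1/\sqrt N$ by Cauchy–Schwarz (or directly from the $\mathcal W_1$ version of the functional).

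The main obstacle I expect is step (iii): unlike the genuinely nonlinear two-process coupling, here the functional carries the unbounded Lyapunov weights $H(X^i_t,V^i_t)$ and $H(\bar X^i_t,\bar V^i_t)$, and the fluctuation error must be paired against $f'(r^i_t)\epsilon H(\cdots)$ — so one cannot simply bound the source term by a constant but must control $\mathbb E\bigl[H(X^i_t,V^i_t)\cdot|\text{fluctuation}_i|\bigr]$ uniformly in time and in $N$. This requires the exponential (Gaussian-type) moment bounds of step (i) to beat the polynomial growth of $H$ via Cauchy–Schwarz, and care that the constants there do not degrade with $N$ — which is exactly why Assumption~\ref{HypW} imposes $L_W<\lambda/8$ and why the theorem assumes an exponential moment on $\nu_0$ rather than merely $\mathcal W_2$-finiteness. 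A secondary technical point is checking that the $c^W$ from Theorem~\ref{thm_1} still suffices once the extra coupling term $\frac{L_W}{N}\sum_j(|Z^j|+|W^j|)$ is summed over $i$: summing gives $L_W\sum_j(|Z^j|+|W^j|)$, which is of the same order as $\sum_i r^i$ and so is absorbed provided $L_W$ is small enough, possibly forcing $c^W$ here to be slightly smaller than in Theorem~\ref{thm_1}.
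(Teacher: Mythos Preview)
Your overall architecture matches the paper's: couple the $N$-particle system with $N$ independent nonlinear copies started from the same data, split the drift discrepancy into a coupling part and a fluctuation part, and control the fluctuation by a conditional i.i.d.\ variance bound giving $O(1/\sqrt N)$. The treatment of the fluctuation via Cauchy--Schwarz against $\mathbb{E}[(G^i_t)^2]$ is also what the paper does.

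The gap is in the absorption of the coupling part. You claim that $\frac{L_W}{N}\sum_j|Z^j_t|$ is ``already absorbed by the contraction budget'' and that summing over $i$ merely gives $L_W\sum_j|Z^j|$, of the same order as $\sum_i r^i$. But the term actually present in the $i$-th differential is $\frac{L_W}{N}\sum_j|Z^j_t|\,f'(r^i_t)\,G^i_t$, with $G^i_t=1+\epsilon H(X^i_t,V^i_t)+\epsilon H(\bar X^i_t,\bar V^i_t)$ \emph{unbounded}. After summing over $i$ and dividing by $N$ you face $\frac{L_W}{N^2}\sum_{i,j}|Z^j_t|\,G^i_t$, and the cross terms $|Z^j_t|\cdot H(X^i_t,V^i_t)$ for $i\neq j$ cannot be absorbed by the available contraction $-c\sum_i f(r^i_t)G^i_t$: since $|Z^j|\lesssim f(r^j)(1+\epsilon\sqrt{H^j})$, the cross term scales like $f(r^j)\sqrt{H^j}\,H^i$, while the contraction only supplies $f(r^j)H^j$ and $f(r^i)H^i$. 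No smallness of $L_W$ repairs this mismatch of powers, and taking expectations does not help either because the particles are correlated.

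The paper closes this by two modifications you omit. First, it replaces $H$ by the exponential Lyapunov function $\tilde H(x,v)=\int_0^{H(x,v)}e^{a\sqrt u}\,du$, for which $\mathcal L\tilde H\le \tilde B-\frac{\gamma}{4}H\,e^{a\sqrt H}$; the contraction term $H\,e^{a\sqrt H}$ is strictly stronger than $\tilde H$ itself, leaving spare dissipation. Second, it augments $G^i_t$ with the empirical means $\frac{\epsilon}{N}\sum_j\tilde H(X^{j,N}_t,V^{j,N}_t)$ and $\frac{\epsilon}{N}\sum_j\tilde H(\bar X^j_t,\bar V^j_t)$. With both changes, the cross term $\frac{L_W}{N^2}\sum_{i,j}|Z^j|G^i$ is dominated \emph{pathwise} by the reserved Lyapunov dissipation, the key algebraic identity being $xe^{a\sqrt y}+ye^{a\sqrt x}\le xe^{a\sqrt x}+ye^{a\sqrt y}$ for $x,y\ge 0$; this same identity is what makes the uniform-in-$N$ bound on $\frac1N\sum_i\tilde H(X^{i,N}_t,V^{i,N}_t)$ (your step (i)) actually work, which is less ``standard'' than you suggest. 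The paper explicitly flags the inclusion of the empirical mean in $G^i_t$ as a new device; without it your step (iii) does not close.
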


\subsection{Comparison to existing works}

Space homogeneous models of diffusive and interacting granular media (see 
\cite{BCCP}) have attracted a lot of attention, usually named McKean-Vlasov diffusions, these past twenty years, by means of a stochastic interpretation and synchronous couplings as in \cite{CGM08} or the recent \cite{Uniform_Prop_Chaos} by reflection couplings enabling to get rid of convexity conditions, but limited to small interactions. Remark however that small interactions are natural to get uniform in time propagation of chaos as 
for large interactions the non linear limit equation may have several stationary measures (see \cite{HT} for example). The granular media equations were interpreted as gradient flows in the space of probability measures in \cite{CMV03}, leading to explicit exponential (or algebraic for non uniformly convex cases) rates of convergence to equilibrium of the non linear equation. Another approach relying on the dissipation of the Wasserstein distance and $WJ$ inequalities was introduced in \cite{BGG13} handling small non convex cases. This approach was implemented in \cite{S20} to 
get propagation of chaos, under roughly the same type of assumptions.

Results on the long time behavior of the non-linear equation \eqref{FP}, i.e. space inhomogeneous, are few, as they combine the difficulty of getting explicit contraction rates for hypoelliptic diffusions as well as  a non linear term. Concerning the uniform in time propagation of chaos, there are no results except in the strictly convex case (with very small perturbation). We however refer to \cite{Villani_hypo} for a result on the torus with $W$ bounded with continuous derivative of all orders, see also \cite{BD95}. Based on functional inequalities (Poincar\'e or logarithmic Sobolev inequalities) for mean field models obtained in \cite{GLWZ19}, other results were obtained provided the confining potential is a small perturbation of a quadratic function as in \cite{M17, Guillin_Liu_kinetic, GM20} which combines the hypocoercivity approach with independent of the number of particles constants appearing in the logarithmic Sobolev inequalities. Our results generalize \cite{GM20}. Indeed, we may consider non gradient interactions whereas it is crucial in their approach to know explicitly the invariant measure of the particles system, and also we may handle only locally Lipschitz confinement potential, whereas they impose at most quadratic growth of the potentials, and non strictly convex at infinity potential. It is however difficult to compare the smallness of the interaction potentials needed in both approaches. Note however that they obtain convergence to equilibrium in entropy whereas we get it in Wasserstein distance (controlled by entropy through a Talagrand inequality). Using a coupling strategy, and more precisely synchronous couplings, results under strict convexity assumption were obtained in \cite{BGM10} for contraction rates in Wasserstein distance, see also \cite{kazeykina2020ergodicity} but only for the nonlinear system.

As we mentioned, we adapt a proof from \cite{Eberle_Guillin_Zimmer_Langevin}, which tackles \eqref{FP} without interaction term. The article uses a Lyapunov condition that guarantees the recurrence of the process on a compact set. This idea is common when proving similar results through a probabilistic lens (see for instance \cite{Talay2002StochasticHS} or \cite{BAKRY}). Lyapunov conditions may also help to implement hypocoercivity techniques {\it \`a la Villani} to handle entropic convergence for non quadratic potentials, see \cite{CGMZ19}. Under the assumption $U$ "greater than a quadratic function" at infinity and  $\nabla W$ Lipschitz continuous, we too consider a Lyapunov function that allows us to construct a specific semimetric improving the convergence speed. But, and this is to our knowledge something new, when proving propagation of chaos we add a form of non linearity in the quantity we consider to tackle a part of the non linearity appearing in the dynamic (see Section~\ref{Preuve_2} below).

%
%

\section{Modified semimetrics}\label{section_2}

As mentioned in the introduction, the proofs rely on the construction of suitable semimetrics on $\R^{2d}$ and $\R^{2dN}$. They are introduced in this 
section, together with some useful properties. In all this section, $\lambda,A,\tilde A,L_U$ and $L_W$ are given by Assumptions \ref{HypU}, \ref{HypU_lip} and \ref{HypW} and Lemma~\ref{minU}. 

Before going into the details, let us highlight the main points of the construction of the semimetrics. It relies on the superposition of three ideas. The first idea is that, in order to deal with the kinetic process \eqref{FP}, the standard Euclidean norm $|x|^2+|v|^2$ is not suitable and one 
should consider a linear change of variables, like $(x,v) \mapsto (x ,x+\beta v)$ for some $\beta\in \R$. This is the case when using coupling methods as in \cite{Eberle_Guillin_Zimmer_Langevin,BGM10} but also when using hypocoercive modified entropies involving mixed derivatives as in \cite{Villani_hypo,Talay2002StochasticHS,B17,CGMZ19}, the link being made in \cite{M19}. This motivates the definition of $r$ below. The second idea is a modification of this distance $r$ by some concave function $f$, which is related to the fact we are using, at least in some parts of the space, a reflection coupling. The concavity is well adapted to It\^o's formula enabling the diffusion to provide a contraction effect (in a compact). This method has been considered for elliptic diffusions in \cite{Eberle_reflection}, see also \cite{Eberle_Guillin_Zimmer_Harris}. Intuitively, the contraction  is produced by the fact that a random decrease in $r$  has more effect on $f(r)$ than a random increase of the same amount. Finally, the third idea is the multiplication of a distance by a Lyapunov function $G$, which has first been used for Wasserstein distances in \cite{HM08}. That way, on average, $f(r)G$ tends to decay because, when $r$ is small, $f(r)$ tends to decay and, when $r$ is large, $G$ tends to decay.

\subsection{A Lyapunov function}

Let
\begin{equation}\label{eq:def_gamma_B}
\gamma=\frac{\lambda}{2\left(\lambda+1\right)}\,,\qquad B=24\left(A+\left(\lambda-\gamma\right)\tilde{A}+d\right)
\end{equation}
and, for $x,v\in \R^d$,
\begin{equation*}
H\left(x,v\right)=24U\left(x\right)+\left(6\left(1-\gamma \right)+\lambda\right)|x|^2+12x\cdot v+12|v|^2\,.
\end{equation*} 
For $\mu$ a probability measure on $\R^d$ with finite first moment, $\nabla W$ being assumed Lipschitz continuous, denote by $\mathcal{L}_\mu$ the generator given by 
\begin{equation*}
\mathcal{L}_\mu\phi\left(x,v\right)=v\cdot\nabla_x\phi\left(x,v\right)-\left(v+\nabla U\left(x\right)+\nabla W\ast\mu\left(x\right)\right)\cdot\nabla_v\phi\left(x,v\right)+\Delta_v\phi\left(x,v\right).
\end{equation*}
The main properties of $H$ are the following.
\begin{lemma}\label{Lya}
Under Assumptions \ref{HypU}, \ref{HypU_lip} and \ref{HypW}, for all $x,v\in\mathbb{R}^d$ and $\mu$,
\begin{align}\label{maj_par_H}
H\left(x,v\right)	\geq&24U(x)+\lambda|x|^2+12\left|v+\frac{x}{2}\right|^2,\\
\label{Lya_H}\mathcal{L}_\mu H\left(x,v\right)\leq &B+L_W(6+8\lambda)\left(\int |y|d\mu(y)\right)^2-\left(\frac{3}{4}\lambda+\lambda^2\right)|x|^2-\gamma  H\left(x,v\right),\\
\mathcal{L}_\mu H\left(x,v\right)\leq &B+\left(\left(\int |y|d\mu(y)\right)^2-|x|^2\right)\left(\frac{3}{4}\lambda+\lambda^2\right)-\gamma  H\left(x,v\right).
\end{align}
In particular $H$ is non-negative and goes to $+\infty$ at infinity.
\end{lemma}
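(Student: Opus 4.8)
The plan is to verify each of the three assertions by direct computation, starting from the explicit formula for $H$. First I would establish the lower bound \eqref{maj_par_H}: expanding $12|v+x/2|^2 = 12|v|^2 + 12 x\cdot v + 3|x|^2$, one sees that $H(x,v) - \left(24U(x) + \lambda|x|^2 + 12|v+x/2|^2\right) = \left(6(1-\gamma) + \lambda - \lambda - 3\right)|x|^2 = \left(3 - 6\gamma\right)|x|^2$, and since $\gamma = \lambda/(2(\lambda+1)) < 1/2$ we have $3 - 6\gamma > 0$, giving the claim. Non-negativity of $H$ then follows from \eqref{maj_par_H} together with $U \geq 0$ (Assumption~\ref{HypU}), and the fact that $H \to +\infty$ at infinity follows from Lemma~\ref{minU} (which gives $U(x) \geq \frac{\lambda}{6}|x|^2 - \tilde A$, so $24U(x) \geq 4\lambda|x|^2 - 24\tilde A$) combined with the coercivity of $|x|^2 + |v+x/2|^2$ in $(x,v)$.

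Next I would compute $\mathcal{L}_\mu H$. Writing $H = 24U(x) + (6(1-\gamma)+\lambda)|x|^2 + 12 x\cdot v + 12|v|^2$ and applying $\mathcal{L}_\mu$ termwise: the transport part $v\cdot\nabla_x$ contributes $24 v\cdot\nabla U(x) + 2(6(1-\gamma)+\lambda) x\cdot v + 12|v|^2$; the drift part $-(v + \nabla U(x) + \nabla W\ast\mu(x))\cdot\nabla_v$ contributes $-(v+\nabla U(x)+\nabla W\ast\mu(x))\cdot(12x + 24v)$; and $\Delta_v$ contributes $24 d$. Collecting terms, the $v\cdot\nabla U(x)$ terms cancel ($24 v\cdot\nabla U - 24 v\cdot\nabla U = 0$), leaving an expression involving $x\cdot\nabla U(x)$, $|x|^2$, $x\cdot v$, $|v|^2$, the constant $24d$, and the interaction terms $-12 x\cdot\nabla W\ast\mu(x) - 24 v\cdot\nabla W\ast\mu(x)$. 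The key input to control the $U$-terms is the Lyapunov-type hypothesis \eqref{HypU-lya}, namely $\frac12\nabla U(x)\cdot x \geq \lambda(U(x) + |x|^2/4) - A$, which converts $-12 x\cdot\nabla U(x)$ into something bounded above by $-24\lambda U(x) - 6\lambda|x|^2 + 24A$; the choice of $B$ in \eqref{eq:def_gamma_B} is precisely engineered to absorb the resulting constants (the $24A$, the $24d$, and a term $24(\lambda-\gamma)\tilde A$ coming from replacing $U$ by its lower bound in the right spots). The interaction terms are handled via Assumption~\ref{HypW}: $|\nabla W\ast\mu(x)| \leq L_W \int|y|\,d\mu(y) + L_W|x|$ — actually more carefully, $\nabla W\ast\mu(x) = \int \nabla W(x-y)\,d\mu(y)$ and $|\nabla W(x-y)| \leq L_W|x-y|$, so one bounds $|x\cdot\nabla W\ast\mu(x)|$ and $|v\cdot\nabla W\ast\mu(x)|$ using Young's inequality, producing the $L_W(6+8\lambda)\left(\int|y|\,d\mu(y)\right)^2$ term and extra quadratic terms in $|x|^2$, $|v|^2$ that must be dominated by the negative quadratic form already present.

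The main obstacle — and the step requiring the most care — is the bookkeeping to show that after all substitutions the remaining negative quadratic form in $(x,v)$ dominates $\gamma H(x,v) + \left(\frac34\lambda + \lambda^2\right)|x|^2$, i.e. that $\mathcal{L}_\mu H + \gamma H + \left(\frac34\lambda+\lambda^2\right)|x|^2 - B - L_W(6+8\lambda)(\int|y|d\mu)^2 \leq 0$. Concretely, one is left with a quadratic form $a|x|^2 + b\,x\cdot v + c|v|^2$ (with coefficients depending on $\lambda, \gamma, L_W$) that one must show is negative semidefinite, which amounts to checking $a \leq 0$, $c \leq 0$ and $b^2 \leq 4ac$; this is where the specific value $\gamma = \lambda/(2(\lambda+1))$ and the smallness condition $L_W < \lambda/8$ get used. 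I would carry this out by first doing the computation with $L_W = 0$ (recovering the non-interacting estimate of \cite{Eberle_Guillin_Zimmer_Langevin}) to fix the structure, then treating the $L_W$ contributions as a perturbation small enough not to destroy definiteness. The third inequality is then immediate from the second, since \eqref{HypW-lip} gives $L_W(6+8\lambda) \leq \frac{\lambda}{8}(6+8\lambda) = \frac{6\lambda}{8} + \lambda^2 < \frac34\lambda + \lambda^2$, so $L_W(6+8\lambda)\left(\int|y|d\mu\right)^2 \leq \left(\frac34\lambda+\lambda^2\right)\left(\int|y|d\mu\right)^2$ and one replaces the coefficient accordingly while keeping the $-\left(\frac34\lambda+\lambda^2\right)|x|^2$ grouped with it.
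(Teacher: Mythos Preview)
Your proposal is correct and follows essentially the same route as the paper: direct computation of $\mathcal{L}_\mu H$, use of \eqref{HypU-lya} to convert $-12x\cdot\nabla U(x)$ into $-24\lambda U(x)-6\lambda|x|^2+24A$, Young-type bounds on the interaction terms, and then algebraic bookkeeping on the resulting quadratic form. The only cosmetic difference is that the paper eliminates the cross term $x\cdot v$ via the specific Young split $|x||v|\leq\tfrac{\lambda}{3}|x|^2+\tfrac{3}{4\lambda}|v|^2$ and then checks the $|x|^2$ and $|v|^2$ coefficients separately (with explicit choices $a_x=1$, $a_v=\tfrac{2}{3}\lambda$), rather than testing the discriminant condition you describe; also note that $\tfrac{\lambda}{8}(6+8\lambda)=\tfrac{3}{4}\lambda+\lambda^2$ exactly, so your last step should read $\leq$ (or invoke the strict $L_W<\lambda/8$), but this does not affect the conclusion.
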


The proof follows from elementary computations and is detailed in Appendix~\ref{preuve_lya}. Notice that  the condition $L_W\leq\lambda/8$ is used 
here.


In the case of particular interest where $\mu=\mu_t$ is given by \eqref{FP}, taking the expectation in \eqref{Lya_H}  and using Gronwall's lemma, we immediately get the following.

\begin{lemma}\label{majMoment}
Under Assumptions \ref{HypU}, \ref{HypU_lip} and \ref{HypW}, let $(X_t,V_t)_{t\geqslant0}$ be a  solution of \eqref{FP} with finite second moment at initial time. For all $t\geq0$,
\begin{align}
\label{E_H_Gronwall}
\frac{d}{dt}\mathbb{E}H\left(X_t,V_t\right)&\leq B-\gamma  \mathbb{E}H\left(X_t,V_t\right),\\
\label{gronwall_H}
\mathbb{E}H\left(X_t,V_t\right)-\frac{B}{\gamma }&\leq\left(\mathbb{E}H\left(X_0,V_0\right)-\frac{B}{\gamma }\right)e^{-\gamma  t}.
\end{align}
\end{lemma}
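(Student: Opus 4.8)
The plan is to deduce Lemma~\ref{majMoment} directly from the differential inequality \eqref{Lya_H} in Lemma~\ref{Lya} applied with $\mu = \mu_t$, the space marginal of the solution $(X_t,V_t)$ of \eqref{FP}. The key observation is that the generator $\mathcal{L}_{\mu_t}$ is precisely the (time-dependent) generator of the Markov process $(X_t,V_t)$ solving \eqref{FP}: since the drift in $v$ is $-(v+\nabla U(x)+\nabla W\ast\mu_t(x))$ and the diffusion in $v$ is $\sqrt 2 \,dB_t$, It\^o's formula gives, for sufficiently regular test functions $\phi$,
\[
\frac{d}{dt}\mathbb{E}\phi(X_t,V_t) = \mathbb{E}\bigl(\mathcal{L}_{\mu_t}\phi(X_t,V_t)\bigr).
\]
Applying this with $\phi = H$ (after justifying the use of It\^o's formula for $H$, which has quadratic-plus-$U$ growth — see below) and then taking expectations in \eqref{Lya_H}, I get
\[
\frac{d}{dt}\mathbb{E}H(X_t,V_t) \leq B + L_W(6+8\lambda)\Bigl(\int|y|\,d\mu_t(y)\Bigr)^2 - \Bigl(\tfrac34\lambda+\lambda^2\Bigr)\mathbb{E}|X_t|^2 - \gamma\,\mathbb{E}H(X_t,V_t).
\]

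The second key step is to absorb the interaction term $L_W(6+8\lambda)\bigl(\int|y|\,d\mu_t(y)\bigr)^2$ using the negative term $-(\tfrac34\lambda+\lambda^2)\mathbb{E}|X_t|^2$. By Jensen's inequality, $\bigl(\int|y|\,d\mu_t(y)\bigr)^2 = \bigl(\mathbb{E}|X_t|\bigr)^2 \leq \mathbb{E}|X_t|^2$, and since $L_W < \lambda/8$ we have $L_W(6+8\lambda) < \tfrac{\lambda}{8}(6+8\lambda) = \tfrac34\lambda + \lambda^2$, so the two terms combine into something $\leq 0$. Alternatively — and this is cleaner — I would use the third inequality of Lemma~\ref{Lya} directly: with $\mu = \mu_t$ it reads $\mathcal{L}_{\mu_t}H(x,v) \leq B + \bigl((\mathbb{E}|X_t|)^2 - |x|^2\bigr)(\tfrac34\lambda+\lambda^2) - \gamma H(x,v)$, and taking expectations the bracketed term becomes $(\mathbb{E}|X_t|)^2 - \mathbb{E}|X_t|^2 \leq 0$ by Jensen. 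Either way one arrives at \eqref{E_H_Gronwall}:
\[
\frac{d}{dt}\mathbb{E}H(X_t,V_t) \leq B - \gamma\,\mathbb{E}H(X_t,V_t).
\]
Then \eqref{gronwall_H} follows from Gr\"onwall's lemma: setting $g(t) = \mathbb{E}H(X_t,V_t) - B/\gamma$, the inequality reads $g'(t) \leq -\gamma g(t)$, hence $g(t) \leq g(0)e^{-\gamma t}$, which is exactly the claimed bound.

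The main obstacle is the rigorous justification that It\^o's formula (equivalently, the Dynkin/Kolmogorov forward identity $\frac{d}{dt}\mathbb{E}\phi = \mathbb{E}\mathcal{L}_{\mu_t}\phi$) applies to $\phi = H$, which is unbounded with unbounded derivatives. The standard remedy is a localization/truncation argument: apply It\^o to $H$ stopped at the exit time $\tau_n$ of the ball of radius $n$, obtain $\mathbb{E}H(X_{t\wedge\tau_n},V_{t\wedge\tau_n}) \leq \mathbb{E}H(X_0,V_0) + \mathbb{E}\int_0^{t\wedge\tau_n}(B - \gamma H(X_s,V_s))\,ds$ (the local martingale part vanishing in expectation after stopping), then pass to the limit $n\to\infty$ using Fatou's lemma on the left, monotone/dominated convergence on the right, together with the a priori finiteness of $\mathbb{E}H(X_t,V_t)$ for all $t$ — which itself follows from the finite-second-moment hypothesis at initial time and the propagation of moments for \eqref{FP} guaranteed by the well-posedness results of \cite{Meleard_Asymptotic}. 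Since the paper states this lemma follows "immediately" by taking expectations and Gr\"onwall, I expect the intended proof simply invokes these standard facts; I would mention the truncation argument in one line and otherwise keep the proof short. Note also that \eqref{gronwall_H} in particular shows $\mathbb{E}H(X_t,V_t) \leq \max\{\mathbb{E}H(X_0,V_0), B/\gamma\}$ is bounded uniformly in time, which is the form in which the estimate will be used later; combined with \eqref{maj_par_H} this yields uniform-in-time bounds on $\mathbb{E}U(X_t)$, $\mathbb{E}|X_t|^2$ and $\mathbb{E}|V_t + X_t/2|^2$.
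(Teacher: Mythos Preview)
Your proposal is correct and matches the paper's approach exactly: the paper does not give a standalone proof but simply states, just before the lemma, that ``taking the expectation in \eqref{Lya_H} and using Gronwall's lemma, we immediately get the following.'' Your treatment of the absorption of the interaction term via Jensen (or via the third inequality of Lemma~\ref{Lya}) and the localization remark are precisely the standard details one fills in behind that sentence.
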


%
%

\subsection{Change of variable and concave modification}\label{subsection_cstes}

We start by fixing the values of some parameters. The somewhat intricate expressions in this section are dictated by the computations arising in the proofs later on. Recall the definition of $\gamma$ and $B$ in \eqref{eq:def_gamma_B}.  Set 
 \[ \alpha=L_U+\frac{\lambda}{4}\,,\quad R_0=\sqrt{\frac{24B}{5\gamma \min\left(3,\frac{\lambda}{3}\right)}} 
\,,\quad R_1= \sqrt{\frac{24\left((1+\alpha)^2+\alpha^2\right) }{5\gamma \min\left(3,\frac{\lambda}{3}\right)}B}.\]
For $x,\tilde x,v,\tilde v\in \R^d$, set
\begin{align*}
r(x,\tilde{x},v,\tilde{v})&=\alpha|x-\tilde{x}|+|x-\tilde{x}+v-\tilde{v}|\,.
\end{align*}


\begin{lemma}\label{Prop-H}
Under Assumptions \ref{HypU}, \ref{HypU_lip} and \ref{HypW}, for all $x,\tilde{x},v,\tilde{v}\in \R^d$,  
\begin{equation}\label{r_min_rho}
r(x,\tilde{x},v,\tilde{v})^2\leq2\frac{\left(1+\alpha\right)^2+\alpha^2}{\min\left(\frac{1}{3}\lambda,3\right)}\left(H\left(x,v\right)+H\left(\tilde{x},\tilde{v}\right)\right)\,,
\end{equation}
so that, in particular,
\begin{equation*}
r(x,\tilde{x},v,\tilde{v})\geq R_1 \qquad \Rightarrow \qquad \gamma  H\left(x,v\right)+\gamma  H\left(\tilde{x},\tilde{v}\right)\geq\frac{12}{5}B.
\end{equation*}
\end{lemma}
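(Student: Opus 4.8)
The plan is to derive \eqref{r_min_rho} from the lower bound \eqref{maj_par_H} on $H$ in Lemma~\ref{Lya}, and then to obtain the implication by plugging in the definition of $R_1$. The key observation is that $r = \alpha|z| + |z+w|$ where $z = x-\tilde x$ and $w = v-\tilde v$, so by the elementary inequality $(a+b)^2 \le 2(a^2+b^2)$ we get
\[
r(x,\tilde x,v,\tilde v)^2 \le 2\alpha^2 |z|^2 + 2|z+w|^2.
\]
Now I need to bound $|z|^2$ and $|z+w|^2$ by the right-hand side of \eqref{maj_par_H} evaluated at $(x,v)$ and $(\tilde x,\tilde v)$.

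First I would establish that $|z|^2 = |x-\tilde x|^2 \le 2|x|^2 + 2|\tilde x|^2$ and, writing $z+w = (x + v) - (\tilde x + \tilde v)$, one has $|z+w|^2 \le 2|x+v|^2 + 2|\tilde x + \tilde v|^2$. The slightly delicate point is matching the coefficients: \eqref{maj_par_H} gives $H(x,v) \ge 24U(x) + \lambda|x|^2 + 12|v + x/2|^2 \ge \lambda |x|^2 + 12|v+x/2|^2$ (dropping the nonnegative $24U(x)$ term using Assumption~\ref{HypU}). Since $|x+v|^2 = |(v + x/2) + x/2|^2 \le 2|v+x/2|^2 + \tfrac12 |x|^2$, I can express $|x+v|^2$ in terms of quantities controlled by $H(x,v)$; combining, both $|x|^2$ and $|x+v|^2$ (hence $|z|^2$ and $|z+w|^2$) are bounded by $\frac{1}{\min(\lambda/3, 3)} H(x,v)$ up to the appropriate constant, after carefully checking that $\lambda/3 \cdot |x|^2 \le \lambda|x|^2$-type comparisons and $3|v+x/2|^2 \le 12 |v+x/2|^2$ go in the right direction. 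Tracking these constants and confirming they assemble into exactly $\frac{(1+\alpha)^2 + \alpha^2}{\min(\lambda/3,3)}$ after the factor-of-2 losses is the main bookkeeping task; I expect this to be the only real obstacle, and it is routine rather than conceptual.

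For the implication, I would simply observe that if $r \ge R_1$ then, by \eqref{r_min_rho} and the definition $R_1^2 = \frac{24((1+\alpha)^2+\alpha^2)}{5\gamma\min(\lambda/3,3)}B$,
\[
H(x,v) + H(\tilde x,\tilde v) \ \ge\ \frac{\min(\lambda/3,3)}{2((1+\alpha)^2+\alpha^2)}\, r^2 \ \ge\ \frac{\min(\lambda/3,3)}{2((1+\alpha)^2+\alpha^2)}\, R_1^2 \ =\ \frac{12}{5\gamma}B,
\]
so multiplying through by $\gamma$ yields $\gamma H(x,v) + \gamma H(\tilde x,\tilde v) \ge \tfrac{12}{5}B$, as claimed. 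This last step is purely a substitution and requires no new ideas; the whole lemma reduces to the constant-chasing in the first inequality.
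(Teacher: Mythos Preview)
Your approach is correct and follows essentially the same strategy as the paper: bound $r^2$ by quadratic quantities in the individual points, then control those by $H$ via \eqref{maj_par_H}, and finally plug in $R_1$. The one cosmetic difference is the initial triangle inequality: you work directly with $r^2 \le 2\alpha^2|z|^2 + 2|z+w|^2$, so the relevant per-point quantities are $|x|^2$ and $|x+v|^2$; the paper instead rewrites $r \le (1+\alpha)|z+w| + \alpha|w|$ (using $|z|\le |z+w|+|w|$), so the per-point quantities become $|v|^2$ and $|x+v|^2$, and then uses the parallelogram identity $|v|^2+|x+v|^2 = 2|v+x/2|^2 + \tfrac12|x|^2$ to compare directly with \eqref{maj_par_H}. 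Both decompositions yield the constant $2((1+\alpha)^2+\alpha^2)/\min(\lambda/3,3)$ once the bookkeeping is done jointly (bounding $|x|^2$ and $|x+v|^2$ separately in your version would give the slightly worse factor $4(\alpha^2+1)$, which does not always sit below $2((1+\alpha)^2+\alpha^2)$; so be sure to combine them before comparing to $\lambda|x|^2+12|v+x/2|^2$). The deduction of the implication from the definition of $R_1$ is identical to the paper's.
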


We refer to Appendix~\ref{preuve_prop_de_H} for the proof. Let 
\begin{align}
c=&\min\Bigg\{\frac{\gamma }{36}\,,\,  \frac{B}{3}\,,\, \frac{1}{7}\min\left(\frac{1}{2}-\frac{L_U+L_W}{2\alpha},2\sqrt{\frac{L_U+L_W}{2\pi\alpha}}\right)\exp\left(-\frac{1}{8}\left(\frac{L_U+L_W}{\alpha}+\alpha+96\max\left(\frac{1}{2\alpha},1\right)\right)R_1^2\right)\Bigg\}\label{c_explicit}\,.
\end{align}
Set
\[\epsilon = \frac{3c}B\,,\qquad  \mathbf{C}=c+2\epsilon B\]
and, for $s\geq 0$,
\[\phi\left(s\right)=\exp\left(-\frac{1}{8}\left(\frac{1}{\alpha}\left(L_U+L_W\right)+\alpha+96\epsilon\max\left(\frac{1}{2\alpha},1\right)\right)s^2\right), \qquad \Phi\left(s\right)=\int_0^{s}\phi\left(u\right)du\]
\[g\left(s\right)=1-\frac{\mathbf{C}}{4}\int_0^{s}\frac{\Phi\left(u\right)}{\phi\left(u\right)} du \,,\qquad
f\left(s\right)=\int_0^{\min\left(s,R_1\right)}\phi\left(u\right)g\left(u\right)du.\]

\begin{remark}
The parameters above are far from being optimal. They are somewhat roughly chosen as we only wish to convey the fact that every constant is explicit.
\end{remark}

The next  lemma, proved in Appendix~\ref{choix_cstes}, gathers the intermediary bounds that will be useful in the proofs of the main results.

\begin{lemma}\label{Choix_constantes}
Under Assumptions \ref{HypU}, \ref{HypU_lip} and \ref{HypW},
\begin{align}
c\leq&\frac{\gamma }{6}\left(1-\frac{\frac{5\gamma }{6}}{2\epsilon B+ \frac{5\gamma }{6}}\right), \label{cond_region_3}\\
L_U+L_W<&\alpha, \label{cond_alpha}\\
c+2\epsilon B\leq&\frac{1}{2}\left(1-\frac{L_U+L_W}{\alpha}\right)\inf_{r\in]0,R_1]}\frac{r\phi\left(r\right)}{\Phi\left(r\right)},\label{cond_region_2_prop}\\
c+2\epsilon B\leq&2\left(\int_0^{R_1}\Phi\left(s\right)\phi\left(s\right)^{-1}ds\right)^{-1}, \label{cond_g_demi_prop}\\
\forall s\geq0,\qquad\ 0=&4 \phi'\left(s\right)+\left(\frac{1}{\alpha}\left(L_U+L_W\right)+\alpha+96\epsilon\max\left(\frac{1}{2\alpha},1\right)\right)s\phi\left(s\right).\label{def-phi}
\end{align}
\end{lemma}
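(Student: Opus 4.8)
The plan is to verify the five displayed inequalities/identities one at a time, exploiting the fact that $c$ was defined in \eqref{c_explicit} precisely as the minimum of several competing bounds, and that $\epsilon = 3c/B$, $\mathbf{C} = c + 2\epsilon B = 7c$. The last relation is the algebraically simplest: since $\mathbf{C} = c + 2\epsilon B$ and $\epsilon B = 3c$, we get $\mathbf{C} = 7c$, a bookkeeping fact that will streamline the other estimates. I would record it first.

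For \eqref{def-phi}: this is a pure calculus identity. Differentiating $\phi(s) = \exp(-\tfrac18 K s^2)$ with $K := \tfrac1\alpha(L_U+L_W) + \alpha + 96\epsilon\max(\tfrac1{2\alpha},1)$ gives $\phi'(s) = -\tfrac14 K s\,\phi(s)$, i.e. $4\phi'(s) + K s\phi(s) = 0$, which is exactly \eqref{def-phi}. For \eqref{cond_alpha}: recall $\alpha = L_U + \lambda/4$ and $L_W < \lambda/8 < \lambda/4$ by Assumption~\ref{HypW}, so $L_U + L_W < L_U + \lambda/4 = \alpha$; this also makes the factor $1 - (L_U+L_W)/\alpha$ appearing later strictly positive. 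For \eqref{cond_region_3}: substitute $\epsilon B = 3c$ and $\tfrac{5\gamma}6 = \tfrac{5\gamma}6$ into the right-hand side, so it becomes $\tfrac\gamma6\bigl(1 - \tfrac{5\gamma/6}{6c + 5\gamma/6}\bigr) = \tfrac\gamma6 \cdot \tfrac{6c}{6c + 5\gamma/6} = \tfrac{\gamma c}{6c + 5\gamma/6}$; since $c \le \gamma/36$ from \eqref{c_explicit}, we have $6c \le \gamma/6$, hence $6c + 5\gamma/6 \le \gamma$, so the right-hand side is $\ge \tfrac{\gamma c}{\gamma} = c$, as required. (One should double-check the denominator manipulation, but the mechanism is clear.)

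The two remaining inequalities, \eqref{cond_region_2_prop} and \eqref{cond_g_demi_prop}, are where the real work is, and I expect \eqref{cond_g_demi_prop} to be the main obstacle because it involves the full integral $\int_0^{R_1}\Phi(s)\phi(s)^{-1}\,ds$ rather than a pointwise ratio. For \eqref{cond_region_2_prop} I would bound $\inf_{r\in(0,R_1]} r\phi(r)/\Phi(r)$ from below: since $\phi$ is decreasing, $\Phi(r) = \int_0^r \phi(u)\,du \le r\phi(0) = r$ near $0$ is too crude; instead use $\Phi(r) \le r$ combined with $\phi(r) \ge \phi(R_1) = \exp(-\tfrac18 K R_1^2)$ on $(0,R_1]$, giving $r\phi(r)/\Phi(r) \ge \phi(R_1)$, so it suffices that $7c \le \tfrac12(1 - (L_U+L_W)/\alpha)\exp(-\tfrac18 K R_1^2)$. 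Comparing with the third term in the defining minimum \eqref{c_explicit} — which contains the factor $\tfrac17\min(\tfrac12 - \tfrac{L_U+L_W}{2\alpha}, \dots)\exp(-\tfrac18(\dots)R_1^2)$ — and noting $\tfrac12 - \tfrac{L_U+L_W}{2\alpha} = \tfrac12(1-(L_U+L_W)/\alpha)$, we see $c$ was chosen so that $7c$ is at most this quantity; one must check the exponent in \eqref{c_explicit} dominates $\tfrac18 K R_1^2$, which it does since $K = \tfrac1\alpha(L_U+L_W) + \alpha + 96\epsilon\max(\tfrac1{2\alpha},1)$ matches the bracket there. For \eqref{cond_g_demi_prop}, I would bound the integral: $\Phi(s)\phi(s)^{-1} = \phi(s)^{-1}\int_0^s\phi(u)\,du$; using $\phi(u) \le \phi(s)^{-1}\cdot$ ... more carefully, since $u \mapsto \phi(u)$ is log-concave and decreasing, $\Phi(s)/\phi(s) \le s\exp(\tfrac18 K s^2)$ is too lossy; a sharper route uses $\int_0^s \phi(u)\,du \le \int_0^\infty \phi(u)\,du = \tfrac12\sqrt{2\pi/(\tfrac14 K)} = \sqrt{2\pi/K}$ is independent of $s$, but then dividing by $\phi(s)^{-1}$... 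Actually the cleaner bound: $\int_0^{R_1}\Phi(s)\phi(s)^{-1}\,ds \le R_1 \cdot \Phi(R_1)\phi(R_1)^{-1} \le R_1 \cdot R_1 \cdot \exp(\tfrac18 K R_1^2) = R_1^2 \exp(\tfrac18 K R_1^2)$, so it suffices that $7c \le 2 R_1^{-2}\exp(-\tfrac18 K R_1^2)$; again this follows from the smallness built into \eqref{c_explicit} provided one carries along the factor $2\sqrt{(L_U+L_W)/(2\pi\alpha)}$ and the $96\max(\tfrac1{2\alpha},1)R_1^2$ term in the exponent there, matching $K$ and absorbing $R_1^2$ into constants. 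The delicate point throughout is that the exponent in the definition of $c$ in \eqref{c_explicit} must be at least $\tfrac18 K R_1^2$ with $K$ as above — which holds because $96\epsilon\max(\tfrac1{2\alpha},1) \le 96\max(\tfrac1{2\alpha},1)$ since $\epsilon \le 1$ (as $\epsilon = 3c/B$ and $c \le B/3$), so $K$ is bounded by the bracket in \eqref{c_explicit}; this monotonicity of the exponent in $\epsilon$ is the single fact that makes all the constant-chasing close.
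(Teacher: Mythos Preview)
Your treatment of \eqref{def-phi}, \eqref{cond_alpha}, \eqref{cond_region_3}, and \eqref{cond_region_2_prop} is correct and matches the paper's approach: you use $\epsilon B = 3c$ (hence $c+2\epsilon B = 7c$), $c\le \gamma/36$, $c\le B/3$ (so $\epsilon\le 1$), and the elementary bound $\Phi(r)\le r$ to reduce \eqref{cond_region_2_prop} to $7c \le \tfrac12(1-(L_U+L_W)/\alpha)\phi(R_1)$, which is precisely what the third branch of the minimum defining $c$ delivers.

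The gap is in \eqref{cond_g_demi_prop}. Your crude bound $\int_0^{R_1}\Phi(s)\phi(s)^{-1}\,ds \le R_1^2\,\phi(R_1)^{-1}$ leads to the sufficient condition $7c \le 2R_1^{-2}\phi(R_1)$, but the definition \eqref{c_explicit} contains \emph{no} factor of $R_1^{-2}$ (or even $R_1^{-1}$) in front of the exponential --- only the constant $2\sqrt{(L_U+L_W)/(2\pi\alpha)}$. So ``absorbing $R_1^2$ into constants'' cannot be done: $R_1$ can be arbitrarily large relative to $\sqrt{2\pi\alpha/(L_U+L_W)}$. You actually had the right idea just before abandoning it. The point is to bound $\Phi$ by a Gaussian integral, but using only the \emph{first} term of $K$: since $\phi(u) \le \exp\!\bigl(-\tfrac{L_U+L_W}{8\alpha}u^2\bigr)$, one gets
\[
\Phi(s)\ \le\ \int_0^\infty \exp\!\Bigl(-\tfrac{L_U+L_W}{8\alpha}u^2\Bigr)\,du\ =\ \sqrt{\tfrac{2\pi\alpha}{L_U+L_W}}\,,
\]
a bound independent of both $s$ and $\epsilon$. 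This is what the paper does; combined with $\phi(s)^{-1}\le \phi(R_1)^{-1}$ it yields $\int_0^{R_1}\Phi/\phi \le \sqrt{2\pi\alpha/(L_U+L_W)}\,R_1\,\phi(R_1)^{-1}$, and the resulting sufficient condition carries exactly the prefactor $2\sqrt{(L_U+L_W)/(2\pi\alpha)}$ that appears in \eqref{c_explicit}. Using the full $K$ in the Gaussian bound (as you first wrote, $\sqrt{2\pi/K}$) would reintroduce $\epsilon$-dependence and the circularity you noticed; dropping to the $\epsilon$-free piece $(L_U+L_W)/\alpha$ is the trick that breaks it.
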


The main properties of $f$ are the following.

\begin{lemma}\label{lem:f}
The function $f$ is 
$\mathcal{C}^2$ on $(0,R_1)$ with $f'_+\left(0\right)=1$ and $f'_-\left(R_1\right)>0$, and constant on $[R_1,\infty)$. Moreover, it is non-negative, non-decreasing and concave, and for all $s\geq 0$,
\begin{equation*}
\min\left(s,R_1\right)f'_-\left(R_1\right)\leq f\left(s\right)\leq\min\left(s,f\left(R_1\right)\right)\leq\min\left(s,R_1\right).
\end{equation*} 
\end{lemma}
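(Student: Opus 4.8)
The plan is to read off each property directly from the explicit formulas defining $\phi$, $\Phi$, $g$ and $f$, using the bounds collected in Lemma~\ref{Choix_constantes}. First I would record the elementary facts about $\phi$ and $\Phi$: the function $\phi$ is smooth, positive, even, decreasing on $[0,\infty)$ with $\phi(0)=1$, and $\Phi$ is its antiderivative vanishing at $0$, hence $\Phi$ is smooth, increasing, concave on $[0,\infty)$, with $\Phi(s)\le s$ and $\Phi(s)\ge s\phi(s)$ (the latter because $\phi$ is decreasing). Consequently the ratio $\Phi/\phi$ is well-defined, nonnegative, nondecreasing, and $\int_0^{R_1}\Phi(u)\phi(u)^{-1}\,du$ is a finite positive number; then $g$, as defined, is $\mathcal C^1$ (indeed smooth) on $[0,R_1]$, nonincreasing (since $g'=-\tfrac{\mathbf C}{4}\Phi/\phi\le 0$), with $g(0)=1$.

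Next I would establish $g\ge \tfrac12$ on $[0,R_1]$, which is exactly where \eqref{cond_g_demi_prop} enters: since $\mathbf C = c+2\epsilon B$ and $g(R_1)=1-\tfrac{\mathbf C}{4}\int_0^{R_1}\Phi(u)\phi(u)^{-1}\,du$, the bound \eqref{cond_g_demi_prop} gives $g(R_1)\ge 1-\tfrac12 = \tfrac12$, and by monotonicity $g(s)\ge\tfrac12>0$ for all $s\in[0,R_1]$. In particular $\phi g>0$ on $[0,R_1]$, so $f(s)=\int_0^{\min(s,R_1)}\phi(u)g(u)\,du$ is strictly increasing on $[0,R_1]$ and constant on $[R_1,\infty)$; it is nonnegative since the integrand is positive; it is $\mathcal C^2$ on $(0,R_1)$ because $\phi g$ is $\mathcal C^1$ there; $f'_+(0)=\phi(0)g(0)=1$; and $f'_-(R_1)=\phi(R_1)g(R_1)>0$. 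Concavity of $f$ on $[0,R_1]$ follows because $f'=\phi g$ is a product of two positive nonincreasing functions, hence nonincreasing, so $f''\le 0$; at $R_1$ the derivative drops to $0$, which only reinforces concavity on $[0,\infty)$.

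For the two-sided bound I would argue as follows. Since $f'=\phi g$ is nonincreasing on $[0,R_1]$, for $s\le R_1$ we have $f(s)=\int_0^s \phi g\ge s\,\phi(R_1)g(R_1)=s f'_-(R_1)$, and for $s\ge R_1$, $f(s)=f(R_1)\ge R_1 f'_-(R_1)$, giving the left inequality $\min(s,R_1)f'_-(R_1)\le f(s)$. For the upper bounds, $f'=\phi g\le \phi(0)g(0)=1$ since both factors are at most $1$ (here $g\le g(0)=1$), so $f(s)\le \min(s,R_1)\le s$; combined with $f$ being constant equal to $f(R_1)$ beyond $R_1$ this yields $f(s)\le\min(s,f(R_1))\le\min(s,R_1)$. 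I do not expect a serious obstacle here; the only point requiring care is making sure the inequality $g\ge\tfrac12$ is genuinely delivered by \eqref{cond_g_demi_prop} with the correct constant $\mathbf C=c+2\epsilon B$, and that the concavity claim is stated on all of $[0,\infty)$ (where one must note $f'$ has a downward jump at $R_1$, compatible with concavity, rather than being $\mathcal C^1$ there).
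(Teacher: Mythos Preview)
Your proposal is correct and follows exactly the same approach as the paper: the paper's proof simply notes that \eqref{cond_g_demi_prop} gives $g\ge\tfrac12$ and then says all points follow from $\phi,g$ being $\mathcal C^2$, positive, decreasing with $\phi(0)=g(0)=1$, and your plan is a careful unpacking of precisely these observations. Your extra remarks (the downward jump of $f'$ at $R_1$ preserving concavity, the explicit inequalities $\Phi(s)\le s$ and $f'\le 1$) are the right details to fill in and contain no gaps.
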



\begin{proof}
First, notice that \eqref{cond_g_demi_prop} ensures that $g(s)\geq\frac{1}{2}$ for all $s\geq 0$. Then, all the points  immediately follow from the fact the functions $\phi$ and $g$ are $\mathcal{C}^2$, positive  and decreasing, with $\phi(0)=g(0)=1$.
\end{proof}

\subsection{The modified semimetrics}

For  $x,\tilde{x},v,\tilde{v}\in \R^d$,   set
\begin{align*}
G\left(x,v,\tilde{x},\tilde{v}\right)&=1+\epsilon H\left(x,v\right)+\epsilon H\left(\tilde{x},\tilde{v}\right),\\
\label{def_rho}
\rho\left(x,v,\tilde{x},\tilde{v}\right)&=f\left(r\left(x,v,\tilde{x},\tilde{v}\right)\right)G\left(x,v,\tilde{x},\tilde{v}\right).
\end{align*}

An immediate corollary of Lemmas~\ref{Prop-H} and \ref{lem:f} is that $\rho$ is a semimetric on $\R^{2d}$ which controls the usual L1 and L2 distances:
\begin{lemma}\label{rho_1_2}
There are explicit constants $\mathcal{C}_1,\mathcal{C}_2,\mathcal{C}_r, \mathcal{C}_z>0$ such that for all $x,x',v,v'\in\mathbb{R}^d$,
\begin{align*}
|x-x'|+|v-v'| & \leq \mathcal{C}_1\rho\left(\left(x,v\right),\left(x',v'\right)\right)\\
|x-x'|^2+|v-v'|^2 &\leq \mathcal{C}_2\rho\left(\left(x,v\right),\left(x',v'\right)\right)\\
r(x,v,x',v')&\leq \mathcal{C}_r\rho\left(\left(x,v\right),\left(x',v'\right)\right)\\
|x-x'|&\leq\mathcal{C}_z f(r(x,v,x',v'))\left(1+\epsilon \sqrt{H(x,v)}+\epsilon \sqrt{H(x',v')}\right).
\end{align*}
\end{lemma}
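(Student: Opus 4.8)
The plan is to produce the four inequalities by combining the lower bound $f(s)\ge \min(s,R_1)f'_-(R_1)$ from Lemma~\ref{lem:f}, the lower bound $G\ge 1$ (since $H\ge 0$ by Lemma~\ref{Lya}), and the coercivity estimate \eqref{maj_par_H} which controls $|x|^2$ and $|v+x/2|^2$ — hence $|x|^2+|v|^2$ up to a constant — by $H(x,v)$. I will treat the $r$-bound first, then the $L^1$ and $L^2$ bounds, and finally the fourth (mixed) inequality, which is the one requiring the most care.

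First, for the third inequality: since $G\ge 1$ we have $\rho \ge f(r)$, and by Lemma~\ref{lem:f}, $f(r)\ge \min(r,R_1)f'_-(R_1)$. When $r\le R_1$ this gives $r \le f(r)/f'_-(R_1)\le \rho/f'_-(R_1)$. When $r>R_1$ I cannot bound $r$ by $f(r)$ alone (since $f$ is constant past $R_1$), so here I use $G$: by Lemma~\ref{Prop-H}, $r\ge R_1$ forces $\gamma(H(x,v)+H(\tilde x,\tilde v))\ge \tfrac{12}{5}B$, hence $\epsilon(H(x,v)+H(\tilde x,\tilde v))\ge \tfrac{12\epsilon B}{5\gamma}=:c_0>0$, so $G\ge 1+c_0$ and also $G \ge \tfrac{c_0}{2}\,\epsilon\,(H(x,v)+H(\tilde x,\tilde v))$ provided we shrink constants; combined with \eqref{r_min_rho}, which bounds $r^2$ by a constant times $H(x,v)+H(\tilde x,\tilde v)$, this yields $r \le r^2/R_1 \le C(H(x,v)+H(\tilde x,\tilde v)) \le C' G \le C'' f(R_1)^{-1} f(r)G = C'' f(R_1)^{-1}\rho$. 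Taking $\mathcal{C}_r$ to be the max of the two regime constants gives the bound in all cases.

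Next, the $L^1$ and $L^2$ bounds follow from the $r$-bound together with \eqref{maj_par_H}. For the $L^1$ estimate, note $|x-\tilde x| \le r/\alpha$ and $|x-\tilde x+v-\tilde v|\le r$, so $|x-\tilde x|+|v-\tilde v| \le (1+2/\alpha) r \le \mathcal{C}_1 \rho$ with $\mathcal{C}_1 = (1+2/\alpha)\mathcal{C}_r$. For the $L^2$ bound I split into $r\le R_1$ and $r>R_1$. When $r\le R_1$, from $|x-\tilde x|+|v-\tilde v|\le (1+2/\alpha)r\le R_1(1+2/\alpha)$ the quantity $r$ is bounded, so $|x-\tilde x|^2+|v-\tilde v|^2 \le (1+2/\alpha)^2 r^2 \le (1+2/\alpha)^2 R_1 \, r \le C\,\rho$ (using $\rho\ge f(r)\ge f'_-(R_1)\,r$ on this range). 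When $r>R_1$, I instead use that $H$ dominates squared norms: by \eqref{maj_par_H}, $|x|^2+|v|^2 \le C(H(x,v)+|x|^2) \le C'H(x,v)$ — more carefully, $\lambda|x|^2\le H(x,v)$ and $12|v+x/2|^2\le H(x,v)$ give $|x|^2+|v|^2 \le C H(x,v)$ — so $|x-\tilde x|^2+|v-\tilde v|^2 \le 2(|x|^2+|v|^2+|\tilde x|^2+|\tilde v|^2) \le C(H(x,v)+H(\tilde x,\tilde v)) \le C' \epsilon(H(x,v)+H(\tilde x,\tilde v)) \le C' G \le C'' \rho$, exactly as in the $r>R_1$ case above. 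Taking $\mathcal{C}_2$ to be the larger of the two constants finishes this.

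Finally, for the fourth inequality, write $\rho = f(r)G = f(r)(1+\epsilon H(x,v)+\epsilon H(\tilde x,\tilde v))$; the target RHS replaces $H$ by $\sqrt{H}$, so this is genuinely weaker than $\rho$ only where $H$ is large, and the content is to control $|x-\tilde x|$ directly. I expect this to be the main obstacle, because one cannot simply bound $|x-\tilde x|$ by $r$ and then by $f(r)$ when $r>R_1$. The idea: $|x-\tilde x| \le r/\alpha$ always, and when $r\le R_1$ we get $|x-\tilde x|\le r/\alpha \le f(r)/(\alpha f'_-(R_1)) \le \mathcal{C}_z f(r)(1+\epsilon\sqrt{H(x,v)}+\epsilon\sqrt{H(\tilde x,\tilde v)})$ since the parenthesis is $\ge 1$. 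When $r > R_1$, then $f(r)=f(R_1)$ is a fixed positive constant, and I bound $|x-\tilde x|\le |x|+|\tilde x|$; by Lemma~\ref{minU}/\eqref{maj_par_H}, $\lambda|x|^2\le H(x,v)$ so $|x|\le \lambda^{-1/2}\sqrt{H(x,v)}$ and likewise for $\tilde x$, giving $|x-\tilde x| \le \lambda^{-1/2}(\sqrt{H(x,v)}+\sqrt{H(\tilde x,\tilde v)}) \le \lambda^{-1/2}\epsilon^{-1} f(R_1)^{-1} f(R_1)\,\epsilon(\sqrt{H(x,v)}+\sqrt{H(\tilde x,\tilde v)}) \le \mathcal{C}_z f(r)(1+\epsilon\sqrt{H(x,v)}+\epsilon\sqrt{H(\tilde x,\tilde v)})$ with $\mathcal{C}_z$ absorbing $\lambda^{-1/2}\epsilon^{-1}f(R_1)^{-1}$. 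Choosing $\mathcal{C}_z$ as the max over the two regimes completes the proof; all constants are explicit in terms of $\alpha,\lambda,\epsilon,R_1,f'_-(R_1),f(R_1),B,\gamma$, which are themselves explicit by the preceding construction.
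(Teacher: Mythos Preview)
Your proof is correct and follows essentially the same strategy as the paper: a case split on the size of $r$, using the lower bound $f(r)\ge f'_-(R_1)\min(r,R_1)$ together with $G\ge 1$ when $r$ is small, and using \eqref{r_min_rho} to pass to $H(x,v)+H(\tilde x,\tilde v)$ and hence to $G$ when $r$ is large. The paper splits at $r=1$ (exploiting $R_1\ge 1$) rather than at $r=R_1$, and for the $L^2$ bound in the large-$r$ regime it invokes \eqref{r_min_rho} directly to control $r^2$ rather than bounding $|x|^2+|v|^2$ by $H$ via \eqref{maj_par_H}; also, the paper's appendix only writes out the $L^1$ and $L^2$ cases explicitly, so your treatment of the $\mathcal{C}_r$ and $\mathcal{C}_z$ inequalities is in fact more complete than what appears there.
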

We also mention a technical lemma, see Appendix~\ref{preuve_dif_H} for proof.
\begin{lemma}\label{lemma_dif_H}
For all $x,v,\tilde{x},\tilde{v}\in\mathbb{R}^d$
\begin{equation}
|H\left(x,v\right)-H\left(\tilde{x},\tilde{v}\right)|\leq C_{dH,1}r(x,\tilde{x},v,\tilde{v})+C_{dH,2}r(x,\tilde{x},v,\tilde{v})\left(\sqrt{H\left(x,v\right)}+\sqrt{H\left(\tilde{x},\tilde{v}\right)}\right),
\end{equation}
where
\begin{align*}
C_{dH,1}:=\frac{24|\nabla U(0)|}{\alpha}\ \ \text{and}\ \ C_{dH,2}:=\frac{24L_U}{\alpha\sqrt{\lambda}}+\frac{6(1-\gamma)+\lambda-3}{\alpha\sqrt{\lambda}}+2\sqrt{3}\max\left(1,\frac{1}{2\alpha}\right).
\end{align*}
\end{lemma}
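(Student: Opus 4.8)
The plan is to estimate $|H(x,v)-H(\tilde x,\tilde v)|$ term by term using the explicit definition
$H(x,v)=24U(x)+(6(1-\gamma)+\lambda)|x|^2+12x\cdot v+12|v|^2$, and to control each difference by $r(x,\tilde x,v,\tilde v)=\alpha|x-\tilde x|+|x-\tilde x+v-\tilde v|$. The key elementary observations are that $|x-\tilde x|\le r/\alpha$ and $|(x-\tilde x)+(v-\tilde v)|\le r$, hence also $|v-\tilde v|\le |x-\tilde x|+|(x-\tilde x)+(v-\tilde v)|\le (1+1/\alpha)r$ or, more conveniently when one wants the cleaner constant $2\sqrt3\max(1,1/(2\alpha))$, one works directly with the combination $x+v$ after completing the square via \eqref{maj_par_H}: $H(x,v)\ge 12|v+x/2|^2$ and $H(x,v)\ge \lambda|x|^2$, so that $|x|\le\sqrt{H(x,v)/\lambda}$ and $|v+x/2|\le\sqrt{H(x,v)/12}$, and similarly for $(\tilde x,\tilde v)$.

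First I would handle the quadratic and bilinear part. Write $Q(x,v)=(6(1-\gamma)+\lambda)|x|^2+12x\cdot v+12|v|^2$; rewriting $12x\cdot v+12|v|^2 = 12|v+x/2|^2-3|x|^2$, we get $Q(x,v)=(6(1-\gamma)+\lambda-3)|x|^2+12|v+x/2|^2$. The difference of the two "$|v+x/2|^2$" terms is, by the identity $|a|^2-|b|^2=(a-b)\cdot(a+b)$, bounded by $|(v-\tilde v)+(x-\tilde x)/2|\,(|v+x/2|+|\tilde v+\tilde v/2|)$; and the vector $(v-\tilde v)+(x-\tilde x)/2 = \tfrac12(x-\tilde x) + \tfrac12\big((x-\tilde x)+(v-\tilde v)\big)$ has norm $\le \tfrac12(r/\alpha + r)=\tfrac12\max(1,1/\alpha)\cdot(\text{something})$, which after plugging the bounds $|v+x/2|\le\sqrt{H(x,v)/12}$ yields a contribution of the form $C\, r\,(\sqrt{H(x,v)}+\sqrt{H(\tilde x,\tilde v)})$ with $C=2\sqrt3\max(1,1/(2\alpha))$ — this is exactly the last summand of $C_{dH,2}$. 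The $|x|^2$-difference is bounded by $|x-\tilde x|(|x|+|\tilde x|)\le (r/\alpha)(\sqrt{H(x,v)/\lambda}+\sqrt{H(\tilde x,\tilde v)/\lambda})$, giving the middle summand $\frac{6(1-\gamma)+\lambda-3}{\alpha\sqrt\lambda}$ of $C_{dH,2}$.

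Next I would handle the potential term $24|U(x)-U(\tilde x)|$. Since $\nabla U$ is $L_U$-Lipschitz (Assumption~\ref{HypU_lip}), $|\nabla U(x)|\le|\nabla U(0)|+L_U|x|$, so by the mean value inequality $|U(x)-U(\tilde x)|\le |x-\tilde x|\sup_{t\in[0,1]}|\nabla U(\tilde x+t(x-\tilde x))|\le |x-\tilde x|\big(|\nabla U(0)|+L_U\max(|x|,|\tilde x|)\big)\le |x-\tilde x|\big(|\nabla U(0)|+L_U(|x|+|\tilde x|)\big)$. Using $|x-\tilde x|\le r/\alpha$ and $|x|\le\sqrt{H(x,v)/\lambda}$ (resp. for $\tilde x$), the $|\nabla U(0)|$ piece contributes $\frac{24|\nabla U(0)|}{\alpha}r = C_{dH,1}r$ and the $L_U$ piece contributes $\frac{24L_U}{\alpha\sqrt\lambda}r(\sqrt{H(x,v)}+\sqrt{H(\tilde x,\tilde v)})$, i.e. the first summand of $C_{dH,2}$. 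Collecting the three contributions and using the triangle inequality $|H-\tilde H|\le 24|U(x)-U(\tilde x)| + |Q(x,v)-Q(\tilde x,\tilde v)|$ gives the claimed bound. There is no real obstacle here: the only mild care needed is the bookkeeping to make the combination $(x-\tilde x)$ and $(x-\tilde x)+(v-\tilde v)$ appear — rather than $v-\tilde v$ alone — so that the constants come out in terms of $r$ with the stated coefficients, and to consistently apply \eqref{maj_par_H} from Lemma~\ref{Lya} to replace $|x|$, $|v+x/2|$ by $\sqrt{H}$. I would simply note that all steps are elementary and defer the full computation to Appendix~\ref{preuve_dif_H}.
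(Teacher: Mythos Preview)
Your proposal is correct and follows essentially the same route as the paper: complete the square so that $H(x,v)=24U(x)+(6(1-\gamma)+\lambda-3)|x|^2+12|v+x/2|^2$, bound each quadratic difference via $|a|^2-|b|^2\le|a-b|(|a|+|b|)$ together with $|x|\le\sqrt{H/\lambda}$ and $|v+x/2|\le\sqrt{H/12}$ from \eqref{maj_par_H}, and treat $U$ by the mean value theorem and the Lipschitz bound on $\nabla U$.

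One small algebraic slip to fix when you write out the details: your identity $(v-\tilde v)+\tfrac12(x-\tilde x)=\tfrac12(x-\tilde x)+\tfrac12\big((x-\tilde x)+(v-\tilde v)\big)$ is wrong (the right-hand side equals $(x-\tilde x)+\tfrac12(v-\tilde v)$). The correct decomposition is
\[
(v-\tilde v)+\tfrac12(x-\tilde x)=\big((x-\tilde x)+(v-\tilde v)\big)-\tfrac12(x-\tilde x),
\]
so that $|(v-\tilde v)+\tfrac12(x-\tilde x)|\le |Q|+\tfrac12|Z|\le \max\!\big(1,\tfrac{1}{2\alpha}\big)\,r$, which after multiplying by $12/\sqrt{12}=2\sqrt3$ gives exactly the constant $2\sqrt3\max(1,1/(2\alpha))$ you stated.
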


Finally, for $\mu$ and $\nu$ two probability measures on $\mathbb{R}^{2d}$ and a measurable function  $h:\mathbb{R}^{2d}\times\mathbb{R}^{2d}\rightarrow\mathbb{R}$, we define
\begin{equation*}
\mathcal{W}_{h}\left(\mu,\nu\right)=\inf_{\Gamma\in\Pi\left(\mu,\nu\right)}\int h\left(x,v,\tilde{x},\tilde{v}\right)\Gamma\left(d\left(x,v\right)d\left(\tilde{x},\tilde{v}\right)\right)\,.
\end{equation*}

%
%

\section{Proof of Theorem~\ref{thm_1}}\label{Preuve_1}

In this section, for the sake of clarity, we only assume the potential $U$ satisfies Assumption~\ref{HypU} and Assumption~\ref{HypU_lip}. We refer 
to Appendix~\ref{Preuve_loc_lip} for the adjustment of the proof in the case $\nabla U$ locally Lipschitz continuous.

Our goal is to prove the following result


\begin{theorem}\label{contraction_rho}
Let $\mathcal{C}^0>0$. Let $U\in\mathcal{C}^2\left(\mathbb{R}^{d}\right)$ satisfy Assumption~\ref{HypU} and Assumption~\ref{HypU_lip}. Let
\begin{align*}
\tilde{\mathcal{C}}_K:=\mathcal{C}_1\left(1+\frac{2\epsilon B}{\gamma }+2\epsilon \mathcal{C}^0\right)+2\epsilon\left(\frac{B}{\gamma }+\mathcal{C}^0\right)\frac{6+8\lambda}{\lambda}.
\end{align*}
For all $W\in\mathcal{C}^2\left(\mathbb{R}^{d}\right)$ satisfying Assumption~\ref{HypW} with $L_W<c/ \tilde{\mathcal{C}}_K$, for all probability measures $\nu^1_0$ and $\nu^2_0$ on $\mathbb{R}^{2d}$ satisfying $\mathbb{E}_{\nu^1_0}H\leq \mathcal{C}^0$ and $\mathbb{E}_{\nu^2_0}H\leq \mathcal{C}^0$
\begin{equation*}
\forall t\geq0,\hspace{0.5cm}\mathcal{W}_\rho\left(\nu^1_t,\nu^2_t\right)\leq e^{-(c-L_W\tilde{\mathcal{C}}_K) t}\mathcal{W}_\rho\left(\nu^1_0,\nu^2_0\right),
\end{equation*}
where $\nu^1_t$ (resp. $\nu^2_t$) is a solution of \eqref{Liouville} with 
initial distribution $\nu^1_0$ (resp. $\nu^2_0$).
\end{theorem}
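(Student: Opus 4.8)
The plan is to couple two non-linear processes by the reflection/synchronous hybrid described in the introduction and to show that $t\mapsto \mathbb E\rho(Z_t)$ (where $Z_t=(X_t,V_t,\tilde X_t,\tilde V_t)$ is the coupled pair) satisfies a differential inequality of the form $\frac{d}{dt}\mathbb E\rho(Z_t)\le -(c-L_W\tilde{\mathcal C}_K)\mathbb E\rho(Z_t)$, whence the conclusion follows by Grönwall and by taking the infimum over couplings of the initial data. First I would fix $(X_0,\tilde X_0,V_0,\tilde V_0)$ distributed according to an (almost) optimal coupling $\Gamma$ of $\nu^1_0,\nu^2_0$ for $\mathcal W_\rho$, and run the two copies of \eqref{FP} driven by Brownian motions $B,B'$ with $dB'_t=(\mathrm{Id}-2e_te_t^T)dB_t$, $e_t$ the unit vector in the direction $Z_t+W_t$ as in the introduction (with the obvious caveat that the marginal laws $\mu_t,\tilde\mu_t$ are frozen inputs, solving \eqref{FP}, not the laws of the coupled processes — this is what makes it a coupling of solutions of the \emph{non-linear} equation rather than a genuine Markovian coupling). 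I would then apply Itô's formula to $\rho(Z_t,W_t)=f(r_t)G_t$ with $r_t=\alpha|Z_t|+|Z_t+W_t|$.

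The core of the argument is the pointwise drift estimate: writing the generator of the coupled diffusion acting on $\rho$, one must show
\[
\mathcal A\rho \le -(c-L_W\tilde{\mathcal C}_K)\,\rho
\]
outside the diagonal, and control the behaviour near/on $\{z+w=0\}$. The plan is to split $\mathbb R^{2d}\times\mathbb R^{2d}$ into three regions according to the size of $r$: (i) a region where $r$ is small, where one exploits the reflection coupling — the second derivative term $2f''(r_t)|e_t\cdot(\cdots)|^2$ coming from the noise is strictly negative and, together with the concavity of $f$ and the specific ODE \eqref{def-phi} satisfied by $\phi$, produces a contraction at rate controlled by $c$; (ii) an intermediate region, handled via \eqref{cond_region_2_prop} and the lower bound on $r\phi(r)/\Phi(r)$; (iii) the far region $r\ge R_1$, where $f$ is constant so $\rho=f(R_1)G_t$ and the contraction comes entirely from the Lyapunov drift of $H$ via \eqref{Lya_H}/\eqref{cond_region_3} and Lemma~\ref{Prop-H}, which guarantees $\gamma H\ge \tfrac{12}{5}B$ there. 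Along the way I would use: the drift of $r_t$ (computed from \eqref{FP}, involving $\nabla U(X_t)-\nabla U(\tilde X_t)$ and $\nabla W\ast\mu_t(X_t)-\nabla W\ast\tilde\mu_t(\tilde X_t)$, the latter bounded by $L_W|Z_t|+L_W\mathcal W_1(\mu_t,\tilde\mu_t)$ via Lemma~\ref{majNablaW}); the decomposition $G_t=1+\epsilon H_t+\epsilon\tilde H_t$ with $\mathcal L_\mu H$ controlled by Lemma~\ref{Lya}; and the product rule contributions mixing $f'(r_t)$ with the $\epsilon H$ terms, which are where the constants $C_{dH,1},C_{dH,2}$ of Lemma~\ref{lemma_dif_H} and the factor $96\epsilon$ in $\phi$ enter.

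The one genuinely non-linear difficulty, absent from the single-process case of \cite{Eberle_Guillin_Zimmer_Langevin}, is the term $L_W\mathcal W_1(\mu_t,\tilde\mu_t)$ produced by Lemma~\ref{majNablaW}: it is not a function of the coupled state, so it cannot be absorbed pointwise. The plan is to bound it using Lemma~\ref{rho_1_2} (which gives $\mathcal W_1\le$ const $\cdot\,\mathcal W_\rho$, with the constant involving $\mathcal C_1$ and the $\epsilon H$ terms, whose expectations are controlled uniformly in time by Lemma~\ref{majMoment} and the hypothesis $\mathbb E_{\nu^i_0}H\le\mathcal C^0$) and then to close the estimate at the level of expectations: after integrating, one gets $\frac{d}{dt}\mathbb E\rho(Z_t)\le -c\,\mathbb E\rho(Z_t)+L_W\tilde{\mathcal C}_K\,\mathbb E\rho(Z_t)$, with $\tilde{\mathcal C}_K$ precisely the constant displayed in the statement (the term $\mathcal C_1(1+\tfrac{2\epsilon B}\gamma+2\epsilon\mathcal C^0)$ coming from controlling $\mathcal W_1\le\mathcal C_1\mathcal W_\rho$ together with the uniform bound on $\mathbb E G$, and the term $2\epsilon(\tfrac B\gamma+\mathcal C^0)\tfrac{6+8\lambda}\lambda$ from the $L_W(6+8\lambda)(\int|y|d\mu)^2$ contribution in \eqref{Lya_H} applied inside the $\epsilon\mathcal L H$ part of $\mathcal A\rho$). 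This is where the hypothesis $L_W<c/\tilde{\mathcal C}_K$ is used, making the net rate $c-L_W\tilde{\mathcal C}_K>0$. I expect the bookkeeping of the three-region drift estimate — verifying that the constants fixed in Section~\ref{subsection_cstes} actually make each regional inequality hold, invoking Lemma~\ref{Choix_constantes} at each step — to be the main obstacle, though it is "only" a careful computation; the conceptual step is recognizing that the non-linear error must and can be closed in expectation rather than pathwise. Finally, taking $\Gamma$ optimal (or $\epsilon$-optimal) at time $0$ and noting $\mathbb E\rho(Z_t)\ge\mathcal W_\rho(\nu^1_t,\nu^2_t)$ yields the claimed contraction; rigour of the Itô/Grönwall step uses the $\mathcal C^2$ regularity of $U$ (hence of the coefficients) and a standard localization/regularization argument to handle the singularity of $e_t$ on the diagonal, exactly as in \cite{Eberle_reflection,Eberle_Guillin_Zimmer_Langevin}.
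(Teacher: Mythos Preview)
Your plan is essentially the paper's proof: the same reflection/synchronous coupling (regularized by a parameter $\xi\to 0$), the same It\^o computation on $e^{ct}f(r_t)G_t$ producing an integrand $K_t$, the same three-region decomposition with Lemma~\ref{Choix_constantes} supplying the needed inequalities in each, and the same closure in expectation of the non-linear term via Lemma~\ref{rho_1_2} and the uniform moment bound from Lemma~\ref{majMoment}, yielding exactly the constant $\tilde{\mathcal C}_K$.

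Two small corrections. First, your description of the regions is slightly off: the split inside $\{r_t\le R_1\}$ is not by the size of $r_t$ but by whether $|Q_t|=|Z_t+W_t|$ is $\ge\xi$ (reflection active, handled via $f''$ and \eqref{def-phi}) or $<\xi$ (synchronous, handled by the deterministic drift and \eqref{cond_region_2_prop}); the third region is $r_t>R_1$ as you say. Second, Lemma~\ref{lemma_dif_H} is not used here --- the cross term from the product rule is the simpler quantity $|\nabla_v H(X_t,V_t)-\nabla_v H(\tilde X_t,\tilde V_t)|=12|2Q_t-Z_t|\le 24\max(1,\tfrac1{2\alpha})r_t$, which is where the factor $96\epsilon\max(\tfrac1{2\alpha},1)$ in $\phi$ comes from; Lemma~\ref{lemma_dif_H} only enters in the propagation-of-chaos proof where the Lyapunov function is the nonlinear $\tilde H$.
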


%
%

\subsection{Step one: Coupling and evolution of the coupling semimetric}
\label{sec:step_one}

Let $\xi>0$, and let $rc,\ sc:\mathbb{R}^{2d}\mapsto[0,1]$ be two Lipschitz continuous functions such that :
\begin{align*}
&rc^2+sc^2=1,\\
&rc\left(z,w\right)=0 \text{ if }|z+w|\leq\frac{\xi}{2}\text{ or }\alpha|z|+|z+w|\geq R_1+\xi,\\
&rc\left(z,w\right)=1 \text{ if }|z+w|\geq\xi\text{ and }\alpha|z|+|z+w|\leq R_1.
\end{align*}
These two functions translate into mathematical terms the regions in which we use a reflection coupling (represented by $rc=1$) and the ones where we use a synchronous coupling (represented by $sc=1$). Finally, $\xi$ is a parameter that will vanish to zero in the end. We therefore consider the following coupling :
\begin{equation*}
\left\{
    \begin{array}{ll}
        dX_t=V_tdt\\
        dV_t=-V_tdt-\nabla U\left(X_t\right)dt-\nabla W\ast\mu_t\left(X_t\right)dt+\sqrt{2}rc\left(Z_t,W_t\right)dB^{rc}_t+\sqrt{2}sc\left(Z_t,W_t\right)dB^{sc}_t\\
        \mu_t=\text{Law}\left(X_t\right)\\
        d\tilde{X}_t=\tilde{V}_tdt\\
        d\tilde{V}_t=-\tilde{V}_tdt-\nabla U(\tilde{X}_t)dt-\nabla W\ast\tilde{\mu}_t(\tilde{X}_t)dt+\sqrt{2}rc\left(Z_t,W_t\right)\left(Id-2e_te^T_t\right)dB^{rc}_t\\\qquad\qquad+\sqrt{2}sc\left(Z_t,W_t\right)dB^{sc}_t\\
        \tilde{\mu}_t=\text{Law}(\tilde{X}_t),
    \end{array}
\right.
\end{equation*}
where $B^{rc}$ and $B^{sc}$ are independent Brownian motions, and
\begin{align*}
Z_t=X_t-\tilde{X}_t,\ \ \ W_t=V_t-\tilde{V}_t,\ \ \ Q_t=Z_t+W_t,\ \ 
\ e_t=\left\{
    \begin{array}{ll}
    \frac{Q_t}{|Q_t|}&\text{ if }Q_t\neq0,\\
    0&\text{ otherwise,}
    \end{array}
\right.
\end{align*}
and $e^T_t$ is the transpose of $e_t$. Then
\begin{equation}\label{dynZ_t}
\frac{dZ_t}{dt}=W_t=Q_t-Z_t.
\end{equation}
So $\frac{d|Z_t|}{dt}=\frac{Z_t}{|Z_t|}\left(Q_t-Z_t\right)$ for every $t$ such that $Z_t\neq0$,
and $ \frac{d|Z_t|}{dt}\leq|Q_t|$  for every $t$ such that $Z_t=0$.
In particular
\begin{equation*}
\frac{d|Z_t|}{dt}\leq|Q_t|-|Z_t|.
\end{equation*}
We start by using It\^o's formula to compute the evolution of $|Q_t|$. The following lemma is Lemma 7 of A.~Durmus \textit{et al.} \cite{Uniform_Prop_Chaos} of which, for the sake of completeness, we give the proof.


\begin{lemma}\label{dyn_Q_t}
Under Assumption~\ref{HypU}, Assumption~\ref{HypU_lip} and Assumption~\ref{HypW}, we have almost surely for all $t\geq0$.
 \begin{align}
 d|Q_t|=&-e_t\cdot \left(\nabla U\left(X_t\right)-\nabla U(\tilde{X}_t)\right)dt-e_t\cdot\left(\nabla W\ast\mu_t\left(X_t\right)-\nabla W\ast\tilde{\mu}_t(\tilde{X}_t)\right)dt\label{|Q_t|}\\
 &+2\sqrt{2}rc\left(Z_t,W_t\right)e_t\cdot dB^{rc}_t\nonumber
  \end{align}
\end{lemma}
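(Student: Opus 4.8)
The statement is a standard computation of the It\^o differential of the norm $|Q_t|$, where $Q_t = Z_t + W_t = (X_t-\tilde X_t) + (V_t-\tilde V_t)$. The plan is to first write down the SDE satisfied by $Q_t$ by subtracting the $V$-equations and adding the (deterministic) $Z$-equation, then apply It\^o's formula to the function $q \mapsto |q|$, being careful about the non-smoothness at $q=0$.

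\textbf{Step 1: the SDE for $Q_t$.} From \eqref{dynZ_t} we have $dZ_t = W_t\,dt = (Q_t - Z_t)\,dt$, which contributes no martingale part. Subtracting the equation for $d\tilde V_t$ from that for $dV_t$ gives
\[
dW_t = -W_t\,dt - \left(\nabla U(X_t) - \nabla U(\tilde X_t)\right)dt - \left(\nabla W\ast\mu_t(X_t) - \nabla W\ast\tilde\mu_t(\tilde X_t)\right)dt + \sqrt 2\, rc(Z_t,W_t)\left(2e_te_t^T\right)dB^{rc}_t,
\]
because the synchronous parts $\sqrt2\,sc\,dB^{sc}_t$ cancel, and the reflection parts combine as $\sqrt2\,rc\,dB^{rc}_t - \sqrt2\,rc\,(Id - 2e_te_t^T)dB^{rc}_t = 2\sqrt2\,rc\,e_te_t^T\,dB^{rc}_t$. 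Adding $dZ_t = (Q_t-Z_t)\,dt = W_t\,dt$ to $dW_t$, the two $\pm W_t\,dt$ terms cancel and we obtain
\[
dQ_t = -\left(\nabla U(X_t) - \nabla U(\tilde X_t)\right)dt - \left(\nabla W\ast\mu_t(X_t) - \nabla W\ast\tilde\mu_t(\tilde X_t)\right)dt + 2\sqrt2\, rc(Z_t,W_t)\,e_te_t^T\,dB^{rc}_t.
\]

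\textbf{Step 2: It\^o's formula for $|Q_t|$.} On the event $\{Q_t \neq 0\}$ the map $q\mapsto|q|$ is smooth with gradient $q/|q| = e_t$ and Hessian $\tfrac1{|q|}(Id - e_te_t^T)$. Applying It\^o's formula, the drift contributes $e_t\cdot(dQ_t)_{\mathrm{drift}}$, giving precisely the first two terms of \eqref{|Q_t|}; the martingale term contributes $2\sqrt2\, rc\, e_t\cdot(e_te_t^T\,dB^{rc}_t) = 2\sqrt2\, rc\, e_t\cdot dB^{rc}_t$ since $e_t^Te_t = 1$. The quadratic variation correction is $\tfrac12\,\mathrm{Tr}\!\left(\tfrac1{|Q_t|}(Id - e_te_t^T)\cdot 8\, rc^2\, e_te_t^Te_te_t^T\right)$, and since $(Id - e_te_t^T)e_te_t^T = 0$ this vanishes exactly — this is the whole point of reflecting only in the direction $e_t$. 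Hence on $\{Q_t\neq 0\}$ the claimed identity holds. The remaining point is the set $\{Q_t = 0\}$: one argues, as in \cite{Uniform_Prop_Chaos}, that the reflection coefficient $rc$ vanishes near $\{z+w=0\}$ (by construction $rc(z,w)=0$ when $|z+w|\le\xi/2$), so near this set $Q_t$ evolves only through a Lipschitz drift and one can show the occupation time of $\{Q_t=0\}$ (or more precisely of a small neighbourhood, letting it shrink) does not contribute, e.g. via a Tanaka-type / local-time argument or by a standard approximation of $|\cdot|$ by smooth functions $\sqrt{|\cdot|^2+\delta^2}$ and passing to the limit $\delta\to0$.

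\textbf{Main obstacle.} The only genuinely delicate point is the behaviour at $Q_t = 0$: one must check that no local time at the origin appears. The key structural fact making this work is that the diffusion coefficient of $Q_t$ is $2\sqrt2\, rc(Z_t,W_t)\,e_te_t^T$, which is degenerate precisely in the directions orthogonal to $e_t$ and, crucially, $rc$ is supported away from $\{z+w=0\}$; so in the region where $Q_t$ can approach $0$ the dynamics is purely a Lipschitz ODE perturbation and the Tanaka correction term (local time of $|Q_t|$ at $0$) is non-positive, hence dropping it only strengthens the inequality — or, if one wants the stated equality, one invokes that the set of times with $Q_t=0$ has zero Lebesgue measure outside the synchronous region. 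Everything else is a routine It\^o computation, and since this is quoted as Lemma 7 of \cite{Uniform_Prop_Chaos}, I would simply reproduce that argument.
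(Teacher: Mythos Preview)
Your proposal is correct and follows essentially the same route as the paper. The paper also first derives $dQ_t$ exactly as you do, then handles the non-smoothness at $0$ by the regularisation $\psi_\eta(r)=(r+\eta)^{1/2}$ applied to $|Q_t|^2$ and passes to the limit $\eta\to0$ by dominated convergence, using precisely the fact you single out, namely that $rc(Z_t,W_t)=0$ when $|Q_t|\le\xi/2$.

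The only cosmetic difference is in how the It\^o correction is seen to vanish: you observe directly that the Hessian of $|\cdot|$ at $q\neq0$ is $\tfrac{1}{|q|}(Id-e e^T)$ and annihilates the diffusion matrix $8\,rc^2\,ee^T$, whereas the paper obtains the same cancellation through the algebraic identity $\lim_{\eta\to0}\bigl(2r\psi_\eta''(r)+\psi_\eta'(r)\bigr)=0$ combined with the bound $8\psi_\eta'(|Q_t|^2)+16\psi_\eta''(|Q_t|^2)|Q_t|^2\le 4\eta/|Q_t|^3$ and $rc=0$ near $|Q_t|=0$. Your geometric phrasing is arguably cleaner; the paper's $\psi_\eta$ approach has the advantage of making the passage through $Q_t=0$ fully rigorous in one sweep rather than as a separate local-time discussion, which in your write-up remains a sketch. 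If you want the stated \emph{equality} (not just an inequality), you should carry out the $\sqrt{|\cdot|^2+\delta^2}$ approximation you mention rather than rely on a Tanaka-type inequality.
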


\begin{proof}
Let $t\geq0$.
We begin by considering the dynamics of $Z_t$, $W_t$ and $Q_t$. We have
 \begin{align*}
 &dZ_t=W_tdt\\
 &dW_t=-W_tdt-\left(\nabla U\left(X_t\right)-\nabla U(\tilde{X}_t)\right)dt-\left(\nabla W\ast\mu_t\left(X_t\right)-\nabla W\ast\tilde{\mu}_t(\tilde{X}_t)\right)dt\\
 &\hspace{1cm}+2\sqrt{2}rc\left(Z_t,W_t\right)e_te_t\cdot dB^{rc}_t\\
 &dQ_t=-\left(\nabla U\left(X_t\right)-\nabla U(\tilde{X}_t)\right)dt-\left(\nabla W\ast\mu_t\left(X_t\right)-\nabla W\ast\tilde{\mu}_t(\tilde{X}_t)\right)dt+2\sqrt{2}rc\left(Z_t,W_t\right)e_te_t\cdot dB^{rc}_t.
 \end{align*}
Therefore
 \begin{align*}
d|Q_t|^2&=-2Q_t\cdot\left(\nabla U\left(X_t\right)-\nabla U(\tilde{X}_t)\right)dt-2Q_t\cdot\left(\nabla W\ast\mu_t\left(X_t\right)-\nabla W\ast\tilde{\mu}_t(\tilde{X}_t)\right)dt\\
&+4\sqrt{2}rc\left(Z_t,W_t\right)\left(Q_t\cdot e_t\right)e_t\cdot dB^{rc}_t+8 rc^2\left(Z_t,W_t\right)dt.
 \end{align*}
We consider, for $\eta>0$, the function $\psi_\eta\left(r\right)=\left(r+\eta\right)^{1/2}$ which is $\mathcal{C}^\infty$ on $]0,\infty[$ and satisfies
 \begin{align*}
 \forall r\geq0,\hspace{0.3cm}&\lim_{\eta\rightarrow0}\psi_\eta\left(r\right)=r^{1/2},\ \ \ \lim_{\eta\rightarrow0}2\psi'_\eta\left(r\right)=r^{-1/2},\ \ \ \lim_{\eta\rightarrow0}4\psi''_\eta\left(r\right)=-r^{-3/2},\\
 &\text{ and thus }\lim_{\eta\rightarrow0}2r\psi''_\eta\left(r\right)+\psi'_\eta\left(r\right)=0.
  \end{align*}
Then
 \begin{align*}
d\psi_\eta\left(|Q_t|^2\right)=&-2\psi'_\eta\left(|Q_t|^2\right)Q_t\cdot\left(\nabla U\left(X_t\right)-\nabla U(\tilde{X}_t)\right)dt\\
&-2\psi'_\eta\left(|Q_t|^2\right)Q_t\cdot\left(\nabla W\ast\mu_t\left(X_t\right)-\nabla W\ast\tilde{\mu}_t(\tilde{X}_t)\right)dt\\
&+4\psi'_\eta\left(|Q_t|^2\right)\sqrt{2}rc\left(Z_t,W_t\right)\left(Q_t\cdot e_t\right)e_t\cdot dB^{rc}_t+8\psi'_\eta\left(|Q_t|^2\right) rc^2\left(Z_t,W_t\right)dt\\
&+16\psi''_\eta\left(|Q_t|^2\right)rc^2\left(Z_t,W_t\right)|Q_t|^2dt.
 \end{align*}
We make sure each individual term converges almost surely as $\eta\rightarrow0$. First, we notice that $$2|Q_t|\psi'_\eta\left(|Q_t|^2\right)=\frac{|Q_t|}{\left(|Q_t|^2+\eta\right)^{1/2}}\leq1.$$ So
  \begin{align*}
 2\psi'_\eta\left(|Q_t|^2\right)Q_t\cdot\left(\nabla U\left(X_t\right)-\nabla U(\tilde{X}_t)\right)\leq|\nabla U\left(X_t\right)-\nabla U(\tilde{X}_t)|\leq L_U|Z_t|.
 \end{align*}
 Then, by dominated convergence, for all $T\geq0$  almost surely
  \begin{align*}
  \lim_{\eta\rightarrow0}\int_{0}^T2\psi'_\eta\left(|Q_t|^2\right)Q_t\cdot\left(\nabla U\left(X_t\right)-\nabla U(\tilde{X}_t)\right)dt&=\int_{0}^T\frac{Q_t}{|Q_t|}\cdot\left(\nabla U\left(X_t\right)-\nabla U(\tilde{X}_t)\right)dt\\
  &=\int_{0}^Te_t \cdot\left(\nabla U\left(X_t\right)-\nabla U(\tilde{X}_t)\right)dt.
  \end{align*}
  Likewise for all  $T\geq0$
 \begin{align*}
 2\psi'_\eta\left(|Q_t|^2\right)Q_t\cdot\left(\nabla W\ast\mu_t\left(X_t\right)-\nabla W\ast\tilde{\mu}_t(\tilde{X}_t)\right)&\leq\left|\nabla W\ast\mu_t\left(X_t\right)-\nabla W\ast\tilde{\mu}_t(\tilde{X}_t)\right|\\&
 \leq L_W|Z_t|+L_W\mathbb{E}|Z_t|,
 \end{align*}
 hence
\begin{align*}
&\lim_{\eta\rightarrow0}\int_{0}^T2\psi'_\eta\left(|Q_t|^2\right)Q_t\cdot\left(\nabla W\ast\mu_t\left(X_t\right)-\nabla W\ast\tilde{\mu}_t(\tilde{X}_t)\right)dt\\&\qquad\qquad=\int_{0}^Te_t\cdot\left(\nabla W\ast\mu_t\left(X_t\right)-\nabla W\ast\tilde{\mu}_t(\tilde{X}_t)\right)dt.
\end{align*}   
Then, since $rc\left(Z_t,W_t\right)=0$ for $|Q_t|\leq\frac{\xi}{2}$ and 

  \begin{align*}
8\psi'_\eta\left(|Q_t|^2\right)+16\psi''_\eta\left(|Q_t|^2\right)|Q_t|^2&=4\left(\frac{1}{\left(|Q_t|^2+\eta\right)^{1/2}}-\frac{|Q_t|^2}{\left(|Q_t|^2+\eta\right)^{3/2}}\right)=4\frac{\eta}{\left(|Q_t|^2+\eta\right)^{3/2}}\leq\frac{4\eta}{|Q_t|^3},
  \end{align*}
we have by dominated convergence
\begin{align*}
\lim_{\eta\rightarrow0}\int_{0}^T\left(8d\psi'_\eta\left(|Q_t|^2\right) rc^2\left(Z_t,W_t\right)+16\psi''_\eta\left(|Q_t|^2\right)rc^2\left(Z_t,W_t\right)|Q_t|^2\right)dt=0.
\end{align*}
Finally, by Theorem 2.12 chapter 4 of \cite{Revuz_Yor}
\begin{align*}
\lim_{\eta\rightarrow0}\int_{0}^T4\sqrt{2}\psi'_\eta\left(|Q_t|^2\right)rc\left(Z_t,W_t\right)\left(Q_t\cdot e_t\right)e_t\cdot dB^{rc}_t=\int_{0}^T2\sqrt{2} rc\left(Z_t,W_t\right)e_t\cdot dB^{rc}.
\end{align*}
For any $t$, we obtain the desired result almost surely. The continuity of $t\mapsto|Q_t|$ then allows us to conclude that \eqref{|Q_t|} is almost surely true for all $t$.
\end{proof}

We denote
\begin{align}
r_t&:=\alpha|X_t-\tilde{X}_t|+|X_t-\tilde{X}_t+V_t-\tilde{V}_t|=\alpha|Z_t|+|Q_t|,\\
\rho_t&:=f\left(r_t\right)G_t \text{ where }G_t=1+\epsilon H\left(X_t,V_t\right)+\epsilon  H(\tilde{X}_t,\tilde{V}_t).
\end{align}
Since $H\left(x,v\right)\geq0$ we have $G_t\geq1$. We now state the main lemma of this section.


\begin{lemma}\label{lemme-maj-rho}
Under Assumption~\ref{HypU}, Assumption~\ref{HypU_lip} and Assumption~\ref{HypW}, let $c\in]0,\infty[$. Then almost surely for all $t\geq0$
\begin{equation}\label{maj-rho}
\forall t\geq0,\ e^{ct}\rho_t\leq \rho_0+\int_0^te^{cs}K_sds+M_t,
\end{equation}
where $\left(M_t\right)_t$ is a continuous local martingale and
\begin{align*}
K_t&=4  f''\left(r_t\right)rc\left(Z_t,W_t\right)^2G_t+c f\left(r_t\right)G_t+\left(\alpha\frac{d|Z_t|}{dt}+ \left(L_U+L_W\right)|Z_t|+L_W\mathbb{E}|Z_t|\right)f'\left(r_t\right)G_t\\
&+\epsilon\left(2B-\gamma  H\left(X_t,V_t\right)-\gamma H(\tilde{X}_t,\tilde{V}_t)\right)f\left(r_t\right)+\epsilon L_W\left(6+8\lambda\right)\left(\mathbb{E}\left(|X_t|\right)^2+\mathbb{E}(|\tilde{X}_t|)^2\right)f\left(r_t\right)\\
&+96\epsilon\max\left(1,\frac{1}{2\alpha}\right) r_tf'\left(r_t\right)rc\left(Z_t,W_t\right)^2.
\end{align*}
\end{lemma}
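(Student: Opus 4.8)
The plan is to apply It\^o's formula to the semimetric $\rho_t = f(r_t) G_t$ and to organize the resulting drift terms so that everything is bounded by $K_t - c\rho_t$, which after multiplying by $e^{ct}$ and integrating yields \eqref{maj-rho}. First I would decompose $\rho_t$ as a product and use the It\^o product rule: $d\rho_t = f'(r_t)\,G_t\,dr_t + \tfrac12 f''(r_t) G_t\,d\langle r\rangle_t + f(r_t)\,dG_t + f'(r_t)\,d\langle r, G\rangle_t$. Since $G_t$ has finite variation in the sense that $dG_t = \epsilon(\mathcal{L}_{\mu_t} H(X_t,V_t) + \mathcal{L}_{\tilde\mu_t} H(\tilde X_t,\tilde V_t))\,dt$ plus a martingale part coming from the Brownian terms in $H$, I need to be careful: the cross-variation $d\langle r, G\rangle_t$ is not obviously zero because both $r_t$ (through $|Q_t|$) and $H(X_t,V_t)$ contain the same Brownian motion $B^{rc}$. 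However, $G_t$ depends on $(X_t,V_t)$ and $(\tilde X_t, \tilde V_t)$ \emph{separately}, while $r_t$ depends on the differences; the term $96\epsilon \max(1,\tfrac1{2\alpha}) r_t f'(r_t) rc^2$ in $K_t$ is precisely what absorbs this cross term plus the quadratic-variation contribution to $H$ — so I would compute $d\langle r, H(X_\cdot,V_\cdot)\rangle_t$ explicitly using $\langle B^{rc}\rangle$ and the gradient of $H$ in $v$, bound $|\nabla_v H(x,v)| = |12x + 24v|$ in terms of $r_t$-related quantities, and check the constant $96\epsilon\max(1,\tfrac1{2\alpha})$ works.

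The key steps in order: (1) write $dr_t = \alpha\,d|Z_t| + d|Q_t|$, using \eqref{dynZ_t} for the (absolutely continuous) evolution of $|Z_t|$ and Lemma~\ref{dyn_Q_t} for $d|Q_t|$, noting that $d\langle r\rangle_t = d\langle |Q|\rangle_t = 8\, rc(Z_t,W_t)^2\,dt$ since the reflection part contributes $2\sqrt2\, rc\, e_t\cdot dB^{rc}$ whose quadratic variation is $8\,rc^2\,dt$; this produces the leading term $4 f''(r_t) rc^2 G_t$ (with $f''\le 0$, the crucial contraction term). (2) Bound the finite-variation part of $dr_t$: the drift of $|Q_t|$ is $-e_t\cdot(\nabla U(X_t)-\nabla U(\tilde X_t)) - e_t\cdot(\nabla W\ast\mu_t(X_t) - \nabla W\ast\tilde\mu_t(\tilde X_t))$, which by Assumption~\ref{HypU_lip} and Lemma~\ref{majNablaW} is at most $(L_U + L_W)|Z_t| + L_W\mathbb{E}|Z_t|$ in absolute value (recalling $\mathbb{E}|Z_t| = \mathbb{E}|X_t - \tilde X_t|$ since the two marginal laws are $\mu_t,\tilde\mu_t$), giving the $(\alpha\,d|Z_t|/dt + (L_U+L_W)|Z_t| + L_W\mathbb{E}|Z_t|) f'(r_t) G_t$ term. (3) Handle $f(r_t)\,dG_t$: its drift is $\epsilon f(r_t)(\mathcal{L}_{\mu_t}H(X_t,V_t) + \mathcal{L}_{\tilde\mu_t}H(\tilde X_t,\tilde V_t))$, bounded above using \eqref{Lya_H} by $\epsilon f(r_t)(2B - \gamma H(X_t,V_t) - \gamma H(\tilde X_t,\tilde V_t) + L_W(6+8\lambda)(\mathbb{E}|X_t|^2 + \mathbb{E}|\tilde X_t|^2))$, recovering the corresponding lines of $K_t$. (4) Add the trivial $c f(r_t) G_t$ from $d(e^{ct}\rho_t) = e^{ct}(d\rho_t + c\rho_t\,dt)$ and collect every remaining martingale contribution — the $e_t\cdot dB^{rc}$ piece weighted by $f'(r_t)G_t$, and the $\nabla_v H\cdot dB$ pieces weighted by $f(r_t)$ — into $M_t$, a continuous local martingale.

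The main obstacle I expect is step (1)–(4)'s bookkeeping of the cross-variation $d\langle r, G\rangle_t$ together with the fact that $f$ is only $\mathcal{C}^2$ on $(0,R_1)$ and constant beyond $R_1$, not globally $\mathcal{C}^2$, and $|Q_t| \mapsto $ its square root is singular at $0$; the paper already circumvented the latter at the level of $|Q_t|$ via the $\psi_\eta$ regularization in Lemma~\ref{dyn_Q_t}, but I would need an analogous care (or an appeal to the occupation-time/It\^o--Tanaka formula, or simply the fact that $rc = 0$ near $|Q_t| = 0$ so no reflection noise acts there) to justify applying It\^o to $f(r_t)$ across the non-smooth points $r_t = 0$ and $r_t = R_1$. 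Concretely, one notes $f'$ is Lipschitz and $f'' $ exists a.e.\ and is bounded, $f$ is concave so $f''\le 0$ in the distributional sense, and since $f$ is constant on $[R_1,\infty)$ the contribution there degenerates harmlessly; the rigorous statement is that the inequality \eqref{maj-rho} holds with $f''(r_t)$ interpreted via the (nonpositive) second-derivative measure, and the genuinely delicate verification is that the constant $96\epsilon\max(1,\tfrac1{2\alpha})$ indeed dominates $\epsilon$ times the quadratic-variation and cross-variation terms generated by $H$'s $v$-dependence along the reflection direction — a computation that uses $|\nabla_v H| \le C(|x| + |v|)$ and the bound relating $|x|+|v|$ back to $\sqrt{H}$ and to $r_t$ up to the factor already built into $G_t$.
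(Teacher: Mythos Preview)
Your approach is essentially the paper's: It\^o's product rule on $e^{ct}f(r_t)G_t$, with the drift of $r_t$ bounded via Assumption~\ref{HypU_lip} and Lemma~\ref{majNablaW}, the drift of $G_t$ via Lemma~\ref{Lya}, and the cross-variation producing the last term of $K_t$. One correction to your final paragraph, which as written would not close: the cross-variation $d\langle f(r_\cdot),G_\cdot\rangle_t$ is $4\epsilon f'(r_t)\,rc^2\,\big(\nabla_vH(X_t,V_t)-\nabla_vH(\tilde X_t,\tilde V_t)\big)\cdot e_t\,dt$, because under the reflection coupling the $e_te_t^T dB^{rc}_t$ contributions to $dV_t$ and $d\tilde V_t$ carry opposite signs, so only the \emph{difference} of the $v$-gradients survives (the orthogonal and synchronous pieces of $dG_t$ have zero covariation with $e_t\cdot dB^{rc}_t$). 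Since $\nabla_vH(x,v)=12x+24v$ is linear, this difference equals $12(Z_t+2W_t)=12(2Q_t-Z_t)$ and is bounded by $24\max(1,\tfrac1{2\alpha})\,r_t$ directly; there is no separate ``quadratic-variation contribution to $H$'' to absorb, and no detour through $|\nabla_vH|\lesssim\sqrt H$ or an extra $G_t$ factor is needed (indeed, such a bound would produce a term not present in $K_t$). The regularity issue you raise for $f$ at $r_t=R_1$ is handled in the paper simply by the fact that $f'$ and $f''$ vanish beyond $R_1$ and $rc=0$ there as well, while near $|Q_t|=0$ the vanishing of $rc$ already removes the singularity, as you note.
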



\begin{proof}

Using \eqref{|Q_t|}
\begin{align*}
|Q_t|&=|Q_0|+A_t^Q+M_t^Q\ \text{ with }\\
&dA_t^Q=-e_t\cdot\left(\nabla U\left(X_t\right)-\nabla U(\tilde{X}_t)\right)dt-e_t\cdot\left(\nabla W\ast\mu_t\left(X_t\right)-\nabla W\ast\tilde{\mu}_t(\tilde{X}_t)\right)dt\\
&dM_t^Q=2\sqrt{2}rc\left(Z_t,W_t\right)e_t\cdot dB^{rc}_t.
\end{align*}
Therefore
$r_t=|Q_0|+\alpha|Z_t|+A_t^Q+ M_t^Q.
$ Let $c>0$. By It\^o's formula
\begin{align*}
d\left(e^{ct}f\left({r_t}\right)\right)=ce^{ct}f\left(r_t\right)dt+e^{ct}f'\left(r_t\right)dr_t+\frac{1}{2}e^{ct}f''\left(r_t\right)8rc^2\left(Z_t,W_t\right)dt.
\end{align*}
Hence
\begin{align*}
e^{ct}f\left(r_t\right)=&f\left(r_0\right)+\hat{A}_t+\hat{M}_t\text{ with }\\
d\hat{A}_t=&\Big(c f\left(r_t\right)+\alpha f'\left(r_t\right)\frac{d|Z_t|}{dt}- f'\left(r_t\right)e_t\cdot\left(\nabla U\left(X_t\right)-\nabla 
U(\tilde{X}_t)\right)\\
&-f'\left(r_t\right)e_t\cdot\left(\nabla W\ast\mu_t\left(X_t\right)-\nabla W\ast\tilde{\mu}_t(\tilde{X}_t)\right)+4 f''\left(r_t\right)rc^2\left(Z_t,W_t\right)\Big)e^{ct}dt\\
d\hat{M}_t=&e^{ct}2\sqrt{2} f'\left(r_t\right)rc\left(Z_t,W_t\right)e_t\cdot dB^{rc}_t.
\end{align*}
We now consider the evolution of $G_t=1+\epsilon H\left(X_t,V_t\right)+\epsilon  H(\tilde{X}_t,\tilde{V}_t)$
\begin{align*}
dG_t=&\epsilon\left(\mathcal{L}_{\mu_t}H\left(X_t,V_t\right)+\mathcal{L}_{\tilde{\mu}_t} H(\tilde{X}_t,\tilde{V}_t)\right)dt\\
&+\epsilon\sqrt{2}rc\left(Z_t,W_t\right)\left(\nabla_vH\left(X_t,V_t\right)-\nabla_v H(\tilde{X}_t,\tilde{V}_t)\right)\cdot e_te_t^T dB^{rc}_t\\
&+\epsilon\sqrt{2}rc\left(Z_t,W_t\right)\left(\nabla_vH\left(X_t,V_t\right)+\nabla_v H(\tilde{X}_t,\tilde{V}_t)\right)\cdot\left(Id-e_te_t^T\right)dB^{rc}_t\\
&+\epsilon\sqrt{2}sc\left(Z_t,W_t\right)\left(\nabla_vH\left(X_t,V_t\right)+\nabla_v H(\tilde{X}_t,\tilde{V}_t)\right)\cdot dB^{sc}_t.
\end{align*}
Therefore
$e^{ct}\rho_t=e^{ct}f\left(r_t\right)G_t=\rho_0+A_t+M_t,
$ where
\begin{align*}
dA_t=&G_td\hat{A}_t+\epsilon e^{ct}f\left(r_t\right)\left(\mathcal{L}_{\mu_t}H\left(X_t,V_t\right)+\mathcal{L}_{\tilde{\mu}_t}H(\tilde{X}_t,\tilde{V}_t)\right)dt\\
&+4\epsilon e^{ct}f'\left(r_t\right)rc^2\left(Z_t,W_t\right)\left(\nabla_vH\left(X_t,V_t\right)-\nabla_v H(\tilde{X}_t,\tilde{V}_t)\right)\cdot e_tdt,
\end{align*}
and $M_t$ is a continuous local martingale. This last equality uses the fact that $B^{rc}$ and $B^{sc}$ are independent Brownian motion and that $e_t\cdot\left(Id-e_te_t^T\right)=0$. Furthermore
\begin{align*}
|\nabla_vH\left(X_t,V_t\right)-\nabla_v H(\tilde{X}_t,\tilde{V}_t)|&=12|X_t+2V_t-\tilde{X}_t-2\tilde{V}_t|=12|2Q_t-Z_t|\\
&\leq24\left(\frac{1}{2}|Z_t|+|Q_t|\right)\\
&\leq24\max\left(1,\frac{1}{2\alpha}\right)r_t,
\end{align*}
so that
$dA_t\leq e^{ct}\tilde{K}_tdt,
$ where
\begin{align*}
\tilde{K}_t=&\Big(c f\left(r_t\right)+\alpha f'\left(r_t\right)\frac{d|Z_t|}{dt}-f'\left(r_t\right)e_t\cdot\left(\nabla U\left(X_t\right)-\nabla 
U(\tilde{X}_t)\right)\\
&- f'\left(r_t\right)e_t\cdot\left(\nabla W\ast\mu_t\left(X_t\right)-\nabla W\ast\tilde{\mu}_t(\tilde{X}_t)\right)+4f''\left(r_t\right)rc^2\left(Z_t,W_t\right)\Big)G_t\\
&+\epsilon\left(\mathcal{L}_tH\left(X_t,V_t\right)+\mathcal{L}_t H(\tilde{X}_t,\tilde{V}_t)\right)f\left(r_t\right)+96\epsilon\max\left(1,\frac{1}{2\alpha}\right) r_tf'\left(r_t\right)rc^2\left(Z_t,W_t\right).
\end{align*}
And we conclude using Lemma~\ref{majNablaW}, and Lemma~\ref{Lya}.
\end{proof}

%
%

\subsection{Step two : Contractivity in various regions of space}\label{contraction}

At this point, we have
\begin{equation*}
\forall t\geq0,\ e^{ct}\rho_t\leq \rho_0+\int_0^te^{cs}K_sds+M_t,
\end{equation*}
where $M_t$ is a continuous local martingale and, by regrouping the terms 
according to how we will use them
\begin{align}
K_t&=\left(c f\left(r_t\right)+\left(\alpha \frac{d|Z_t|}{dt}+ (L_U+ L_W)|Z_t|\right)f'\left(r_t\right)\right)G_t\label{K_t_lip_1}\\
&+4\left( f''\left(r_t\right)G_t+24\epsilon\max\left(1,\frac{1}{2\alpha}\right) r_tf'\left(r_t\right)\right)rc\left(Z_t,W_t\right)^2\label{K_t_lip_1_bis}\\
&+\epsilon\left(2B-\gamma  H\left(X_t,V_t\right)-\gamma  H(\tilde{X}_t,\tilde{V}_t)\right)f\left(r_t\right)\label{K_t_lip_2}\\
&+L_Wf'\left(r_t\right)\mathbb{E}\left(|Z_t|\right)G_t+\epsilon L_W\left(6+8\lambda\right)\left(\mathbb{E}\left(|X_t|\right)^2+\mathbb{E}(|\tilde{X}_t|)^2\right)f\left(r_t\right).\label{K_t_lip_3}
\end{align}
Briefly,
\begin{itemize}
\item  lines \eqref{K_t_lip_1} and \eqref{K_t_lip_1_bis} will be non positive thanks to the construction of the function $f$ when using the reflection coupling,
\item  when only using the synchronous coupling, i.e when the deterministic drift is contracting, line \eqref{K_t_lip_1} alone will be sufficiently small,
\item  line \eqref{K_t_lip_2} translates the effect the Lyapunov function 
has in bringing back processes that would have ventured at infinity,
\item  finally, line \eqref{K_t_lip_3} contains the non linearity and will be tackled by taking $L_W$ sufficiently small.
\end{itemize}

In this section, we thus prove the following lemma
\begin{lemma}\label{maj_K}
Assume the confining potential $U$ satisfies Assumption~\ref{HypU} and Assumption~\ref{HypU_lip}, there is a constant $c^W>0$ such that for all interaction potential $W$ satisfying Assumption~\ref{HypW} with $L_W<c^W$, the following holds for $K_t$ defined in \eqref{K_t_lip_1}-\eqref{K_t_lip_3}
\begin{equation*}
\mathbb{E}K_t\leq\left(1+\alpha\right)\xi \mathbb{E}G_t+L_W\left(\mathcal{C}_K+\mathcal{C}^0_Ke^{-\gamma t}\right)\mathbb{E}\rho_t,
\end{equation*}
with
\begin{equation}\label{C_k}
\mathcal{C}_K=\mathcal{C}_1\left(1+\frac{2\epsilon B}{\gamma }\right)+\frac{2\epsilon B}{\gamma\lambda }\left(6+8\lambda\right),
\end{equation}
\begin{equation*}
\mathcal{C}^0_K=\epsilon\left(\mathcal{C}_1+\frac{6+8\lambda}{\lambda}\right)\left(\mathbb{E}H\left(X_0,V_0\right)+\mathbb{E}H(\tilde{X}_0,\tilde{V}_0)\right).
\end{equation*}
The constant $c^W$ is explicit, as it will be shown in Appendix~\ref{choix_cstes}.
\end{lemma}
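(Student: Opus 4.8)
The plan is to bound the expectation of $K_t$ line by line, following the grouping in \eqref{K_t_lip_1}--\eqref{K_t_lip_3}, and then to split the analysis of \eqref{K_t_lip_1}--\eqref{K_t_lip_2} into the three regions encoded by the cutoff functions $rc,sc$: the \emph{reflection region} $\{|Q_t|\geq\xi,\ r_t\leq R_1\}$ where $rc=1$, the \emph{synchronous region} where $sc=1$ (either $|Q_t|$ small or $r_t$ large), and the transition zone of width $\xi$. The key deterministic inequalities are $\frac{d|Z_t|}{dt}\leq |Q_t|-|Z_t|$, $|e_t\cdot(\nabla U(X_t)-\nabla U(\tilde X_t))|\leq L_U|Z_t|$, and, from Lemma~\ref{majNablaW}, $|e_t\cdot(\nabla W\ast\mu_t(X_t)-\nabla W\ast\tilde\mu_t(\tilde X_t))|\leq L_W|Z_t|+L_W\mathbb{E}|Z_t|$, which is why the terms $L_W f'(r_t)\mathbb{E}|Z_t|\,G_t$ get pushed into \eqref{K_t_lip_3}.

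First I would treat lines \eqref{K_t_lip_1}--\eqref{K_t_lip_1_bis}. On the reflection region, $\alpha\frac{d|Z_t|}{dt}\leq \alpha(|Q_t|-|Z_t|)$ so that $\alpha\frac{d|Z_t|}{dt}+(L_U+L_W)|Z_t|\leq \alpha|Q_t|-(\alpha-L_U-L_W)|Z_t|\leq r_t - (\alpha-L_U-L_W)|Z_t|$; combined with $rc=1$, lines \eqref{K_t_lip_1} and \eqref{K_t_lip_1_bis} together are bounded by $G_t$ times
\[
cf(r_t) + \Big(r_t-(\alpha-L_U-L_W)|Z_t|\Big)f'(r_t) + 4f''(r_t) + 96\epsilon\max\!\big(1,\tfrac{1}{2\alpha}\big)r_tf'(r_t),
\]
and the point is that the defining ODE \eqref{def-phi} for $\phi$ together with the construction of $f=\int_0^{\min(\cdot,R_1)}\phi g$ and the bound $g\geq 1/2$ (Lemma~\ref{lem:f}, via \eqref{cond_g_demi_prop}) was rigged precisely so that $4f''(r_t)+\big(\tfrac{1}{\alpha}(L_U+L_W)+\alpha+96\epsilon\max(\tfrac{1}{2\alpha},1)\big)r_tf'(r_t)\leq -\tfrac{\mathbf C}{4}\,\text{(something)}$, yielding $cf(r_t)+r_tf'(r_t)+4f''(r_t)+\dots\leq 0$ once $c\leq \gamma/36$ and the $\epsilon$-terms are controlled; the leftover $-(\alpha-L_U-L_W)|Z_t|f'(r_t)G_t$ is $\leq 0$ by \eqref{cond_alpha}. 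On the synchronous region I would use instead $f''\equiv 0$ outside $[0,R_1]$ and the contraction $\frac{d|Z_t|}{dt}\leq |Q_t|-|Z_t|$: when $|Q_t|\leq\xi$, $\alpha\frac{d|Z_t|}{dt}+(L_U+L_W)|Z_t|\leq \alpha\xi - (\alpha-L_U-L_W)|Z_t|$, so \eqref{K_t_lip_1}$\leq (cf(r_t)+\alpha\xi f'(r_t))G_t\leq (cf(r_t)+\alpha\xi)G_t$ since $f'\leq 1$; and when $r_t\geq R_1$ the bound $f(r_t)=f(R_1)$ is constant and the factor $c$ is absorbed by the Lyapunov term \eqref{K_t_lip_2} as in the next step, using $cf(R_1)\leq c R_1$. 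The constant $(1+\alpha)\xi$ in the statement collects the $\alpha\xi$ from the synchronous region and a $\xi$ from the transition zone.

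Next I would handle the Lyapunov line \eqref{K_t_lip_2}. When $r_t\geq R_1$, Lemma~\ref{Prop-H} gives $\gamma H(X_t,V_t)+\gamma H(\tilde X_t,\tilde V_t)\geq \tfrac{12}{5}B$, so $\epsilon(2B-\gamma H-\gamma H)f(r_t)\leq -\tfrac{2\epsilon B}{5}f(R_1)$, which dominates the residual $cf(R_1)$ from the synchronous far region provided $c\leq \tfrac{2\epsilon B}{5}=\tfrac{6c}{5}$, automatically true — more carefully one uses \eqref{cond_region_3} to balance $cf(r_t)G_t$ against $-\tfrac{2\epsilon B}{5}f(R_1)$ uniformly in $H$. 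When $r_t\leq R_1$, $2B-\gamma H-\gamma H\leq 2B$ so \eqref{K_t_lip_2}$\leq 2\epsilon B f(r_t)\leq 2\epsilon B f(R_1)$, a bounded quantity that is absorbed into the $L_W\mathcal{C}_K\mathbb{E}\rho_t$ term because $f(r_t)\leq \mathcal{C}_1\rho_t/(\text{something})$... actually here the cleanest route is: on $r_t\le R_1$ the sum of \eqref{K_t_lip_1}--\eqref{K_t_lip_2} is $\le 0$ by the $f$-construction (the $+2\epsilon B f$ is exactly the $2\epsilon B$ absorbed into choosing $\epsilon=3c/B$, i.e. $\mathbf C = c+2\epsilon B$, and the ODE bound gives $cf+r_tf'+4f''+96\epsilon(\dots)r_tf'+2\epsilon B f\le 0$). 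Finally, line \eqref{K_t_lip_3}: using $\mathbb{E}|Z_t|\leq \mathcal{C}_1\mathbb{E}\rho_t$ (Lemma~\ref{rho_1_2}), $f'\leq 1$, $G_t\geq 1$, and the moment bound \eqref{gronwall_H} together with $\lambda\mathbb{E}|X_t|^2\le 6\,\mathbb{E}H(X_t,V_t)$ (via Lemma~\ref{minU}/\eqref{maj_par_H}) and $\mathbb{E}H(X_t,V_t)\le \tfrac B\gamma + (\mathbb{E}H(X_0,V_0)-\tfrac B\gamma)e^{-\gamma t}$, one gets $\mathbb{E}$\eqref{K_t_lip_3}$\leq L_W\big(\mathcal{C}_K+\mathcal{C}^0_K e^{-\gamma t}\big)\mathbb{E}\rho_t$ with exactly the stated $\mathcal{C}_K,\mathcal{C}^0_K$, after also bounding $f(r_t)\mathbb{E}|X_t|^2\le (\text{const})\,\mathbb{E}\rho_t$ via $f(r_t)\le f(R_1)\le R_1$ and $\mathbb{E}\rho_t\ge \mathbb{E}f(r_t)\ge \dots$ — more precisely one uses $\epsilon f(r_t)\mathbb{E}|X_t|^2\le \tfrac{6\epsilon}{\lambda}f(r_t)\mathbb{E}H(X_t,V_t)$ and then that $f(r_t)\mathbb{E}H(X_t,V_t)$ is controlled by $\mathbb{E}\rho_t$ times the moment factor, the time-dependent part producing $\mathcal{C}^0_K e^{-\gamma t}$.

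Then I would assemble the three regions: taking expectations and using that the pointwise bounds on disjoint regions patch together (the transition zone contributing only an extra $O(\xi)$), $\mathbb{E}K_t\leq (1+\alpha)\xi\,\mathbb{E}G_t + L_W(\mathcal{C}_K+\mathcal{C}^0_K e^{-\gamma t})\mathbb{E}\rho_t$, and setting $c^W$ to whatever threshold on $L_W$ was needed so that the negative $f$-construction terms actually dominate the $L_W$-dependent contributions on the reflection/synchronous regions (this is the source of the condition $L_W<c/\tilde{\mathcal C}_K$ appearing in Theorem~\ref{contraction_rho}). The main obstacle is the bookkeeping in the reflection region: verifying that the specific $\phi,g,f,\epsilon,c$ chosen in Section~\ref{subsection_cstes} make the combination $4f''(r_t)G_t + \big(cf(r_t)+r_tf'(r_t)\big)G_t + 96\epsilon\max(1,\tfrac{1}{2\alpha})r_tf'(r_t) + 2\epsilon B f(r_t)$ (plus the $-(\alpha-L_U-L_W)|Z_t|f'$ slack) genuinely $\le -(c-L_W\tilde{\mathcal C}_K)\rho_t$, i.e. that all the constraints \eqref{cond_region_3}--\eqref{def-phi} of Lemma~\ref{Choix_constantes} combine as designed; this is where the concavity of $f$, the ODE \eqref{def-phi}, and the lower bound $g\geq 1/2$ all get used simultaneously, and it is the only step that is genuinely delicate rather than a routine moment estimate.
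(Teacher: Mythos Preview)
Your overall architecture matches the paper's: split into the three geometric regions $\{|Q_t|\geq\xi,\ r_t\leq R_1\}$, $\{|Q_t|<\xi,\ r_t\leq R_1\}$, and $\{r_t>R_1\}$, use the $f$--construction in the first, the Lyapunov function in the third, and bound the nonlinear line \eqref{K_t_lip_3} via $\mathbb{E}|Z_t|\leq\mathcal{C}_1\mathbb{E}\rho_t$, $f(r_t)\leq\rho_t$, $\mathbb{E}|X_t|^2\leq\lambda^{-1}\mathbb{E}H(X_t,V_t)$, and \eqref{gronwall_H}. That part is essentially right.

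The genuine gap is your treatment of the second region $\{|Q_t|<\xi,\ r_t\leq R_1\}$. You write $\alpha\tfrac{d|Z_t|}{dt}+(L_U+L_W)|Z_t|\leq \alpha\xi-(\alpha-L_U-L_W)|Z_t|$ and then \emph{discard} the negative term to get $\eqref{K_t_lip_1}\leq(cf(r_t)+\alpha\xi)G_t$. But that leaves $(c+2\epsilon B)f(r_t)G_t=\mathbf{C}f(r_t)G_t$ (once you add \eqref{K_t_lip_2}) with nothing to cancel it: in this region $rc$ may vanish, so the concavity term $4f''(r_t)rc^2$ disappears and the ODE argument you invoke for ``$r_t\leq R_1$'' does not apply. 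The paper's mechanism here is different from Region~1: one keeps the drift term, uses $\alpha|Z_t|\geq r_t-\xi$ (since $|Q_t|<\xi$) to obtain
\[
\alpha\tfrac{d|Z_t|}{dt}+(L_U+L_W)|Z_t|\leq (1+\alpha)\xi - \big(1-\tfrac{L_U+L_W}{\alpha}\big)r_t,
\]
and then, since $f'\geq\tfrac12\phi$ and $f\leq\Phi$, condition \eqref{cond_region_2_prop} is precisely what forces $\mathbf{C}\Phi(r_t)\leq\tfrac12\big(1-\tfrac{L_U+L_W}{\alpha}\big)r_t\phi(r_t)$, cancelling $\mathbf{C}f(r_t)G_t$ against the deterministic contraction. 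This is also where the full $(1+\alpha)\xi$ comes from (both the $\alpha\xi$ and the extra $\xi$), not from a separate ``transition zone''; the paper's regions are purely geometric and there is no fourth region to analyze.

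Two minor slips: in Region~1 the drift bound is $\alpha\tfrac{d|Z_t|}{dt}+(L_U+L_W)|Z_t|\leq\big(\tfrac{L_U+L_W}{\alpha}+\alpha\big)r_t$, not $r_t-(\dots)$; and the moment bound $\mathbb{E}|X_t|^2\leq\lambda^{-1}\mathbb{E}H(X_t,V_t)$ follows directly from \eqref{maj_par_H} with no factor~$6$.
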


To this end, we divide the space into three regions 
\begin{align*}
\mbox{Reg}_1=&\left\{(X_t,V_t,\tilde{X}_t,\tilde{V}_t)\text{ s.t. }|Q_t|\geq\xi \text{ and } r_t\leq R_1\right\},\\
\mbox{Reg}_2=&\left\{(X_t,V_t,\tilde{X}_t,\tilde{V}_t)\text{ s.t. }|Q_t|<\xi \text{ and } r_t\leq R_1\right\},\\
\mbox{Reg}_3=&\left\{(X_t,V_t,\tilde{X}_t,\tilde{V}_t)\text{ s.t. }r_t> 
R_1\right\},
\end{align*}
and consider
\begin{align*}
\mathbb{E}K_t=\mathbb{E}(K_t\mathds{1}_{\mbox{Reg}_1})+\mathbb{E}(K_t\mathds{1}_{\mbox{Reg}_2})+\mathbb{E}(K_t\mathds{1}_{\mbox{Reg}_3}).
\end{align*}


\subsubsection{First region : $|Q_t|\geq\xi$ and $r_t\leq R_1$}\label{first_region}

In this region of space, we use the Brownian motion through the reflection coupling and the construction of the function $f$ to bring the processes closer together.  Here we have $rc\left(Z_t,W_t\right)=1$. Recall $\alpha|Z_t|+|Q_t|=r_t$ and $G_t\geq1$.

\begin{itemize}
\item  We have
\begin{align*}
\alpha\frac{d|Z_t|}{dt}+ \left(L_U+L_W\right)|Z_t|&\leq\alpha|Q_t|-\alpha|Z_t|+\left(L_U+L_W\right)|Z_t|\\
&=\alpha r_t-\alpha^2|Z_t|-\alpha|Z_t|+  \left(L_U+L_W\right)|Z_t|\\
&\leq\left( \frac{1}{\alpha}\left(L_U+L_W\right)+\alpha \right)r_t.
\end{align*}

\item Since $G_t=1+\epsilon H\left(X_t,V_t\right)+\epsilon H(\tilde{X}_t,\tilde{V}_t)\geq1$,
\begin{equation}\label{def_C}
c G_t+\epsilon\left(2B-\gamma  H\left(X_t,V_t\right)-\gamma H(\tilde{X}_t,\tilde{V}_t)\right)\leq cG_t+2\epsilon BG_t=\mathbf{C}G_t.
\end{equation}
\item We then have, by \eqref{def-phi},
\begin{align*}
4 \phi'\left(r_t\right)+\left(\frac{1}{\alpha}\left(L_U+L_W\right)+\alpha+96\epsilon\max\left(\frac{1}{2\alpha},1\right)\right)r_t\phi\left(r_t\right)=0.
\end{align*}
Hence
\begin{align*}
4f''\left(r_t\right)&+\left(\frac{1}{\alpha}\left(L_U+L_W\right)+\alpha+96\epsilon\max\left(\frac{1}{2\alpha},1\right)\right)r_tf'\left(r_t\right)\\
&=4 \phi'\left(r_t\right)g\left(r_t\right)+4 \phi\left(r_t\right)g'\left(r_t\right)+\left( \frac{1}{\alpha}\left(L_U+L_W\right)+\alpha+96\epsilon\max\left(\frac{1}{2\alpha},1\right)\right)r_t\phi\left(r_t\right)g\left(r_t\right)\\
&=4\phi\left(r_t\right)g'\left(r_t\right),
\end{align*}
and
\begin{align*}
4 \phi\left(r_t\right)g'\left(r_t\right)+\mathbf{C}f\left(r_t\right)\leq-4 \frac{\mathbf{C}}{4}\Phi\left(r_t\right)+\mathbf{C}\Phi\left(r_t\right)=0.
\end{align*}
\item At this point, through this choice of function $f$, we are left with
\begin{align*}
K_t\mathds{1}_{\mbox{Reg}_1}\leq L_Wf'\left(r_t\right)\mathbb{E}\left(|Z_t|\right)G_t+\epsilon L_W\left(6+8\lambda\right)\left(\mathbb{E}\left(|X_t|\right)^2+\mathbb{E}(|\tilde{X}_t|)^2\right)f\left(r_t\right).
\end{align*}
Using Lemma~\ref{rho_1_2}, $f'\left(r_t\right)\leq1$ and $G_t\geq1$,
\begin{align*}
\mathbb{E}\left(K_t\mathds{1}_{\mbox{Reg}_1}\right)\leq L_W\mathcal{C}_1\mathbb{E}\left(\rho_t\right)\mathbb{E}\left(G_t\right)+\epsilon L_W\left(6+8\lambda\right)\left(\mathbb{E}\left(|X_t|\right)^2+\mathbb{E}(|\tilde{X}_t|)^2\right)\mathbb{E}\left(\rho_t\right).
\end{align*}
Recall \eqref{gronwall_H}
\begin{align*}
\mathbb{E}\left(G_t\right)=&1+\epsilon \mathbb{E}H\left(X_t,V_t\right)+\epsilon \mathbb{E}H(\tilde{X}_t,\tilde{V}_t),\\
\leq&1+\frac{2\epsilon B}{\gamma }+\epsilon\left(\mathbb{E}H\left(X_0,V_0\right)+\mathbb{E}H(\tilde{X}_0,\tilde{V}_0)\right)e^{-\gamma  t},
\end{align*}
and, since $H(x,v)\geq\lambda|x|^2$,
\begin{align*}
\mathbb{E}\left(|X_t|\right)^2+\mathbb{E}(|\tilde{X}_t|)^2\leq&\frac{1}{\lambda}\mathbb{E}H\left(X_t,V_t\right)+\frac{1}{\lambda}\mathbb{E}H(\tilde{X}_t,\tilde{V}_t),\\
\leq&\frac{2 B}{\gamma\lambda }+\frac{1}{\lambda}\left(\mathbb{E}H\left(X_0,V_0\right)+\mathbb{E}H(\tilde{X}_0,\tilde{V}_0)\right)e^{-\gamma  t}.
\end{align*}
Hence
\begin{align*}
\mathbb{E}\left(K_t\mathds{1}_{\mbox{Reg}_1}\right)\leq& L_W\left(\mathcal{C}_1\left(1+\frac{2\epsilon B}{\gamma }\right)+\frac{2\epsilon B}{\gamma\lambda }\left(6+8\lambda\right)\right)\mathbb{E}\left(\rho_t\right)\\
&+L_W\epsilon\left(\mathcal{C}_1+\frac{6+8\lambda}{\lambda}\right)\left(\mathbb{E}H\left(X_0,V_0\right)+\mathbb{E}H(\tilde{X}_0,\tilde{V}_0)\right)\mathbb{E}\left(\rho_t\right)e^{-\gamma  t}.
\end{align*}
We thus obtain
$\mathbb{E}\left(K_t\mathds{1}_{\mbox{Reg}_1}\right)\leq L_W\left(\mathcal{C}_K+\mathcal{C}^0_Ke^{-\gamma t}\right)\mathbb{E}\rho_t.
$
\end{itemize}


\subsubsection{Second region : $|Q_t|<\xi$ and $r_t\leq R_1$}

In this region of space, we use the naturally contracting deterministic drift thanks to a synchronous coupling. Here $R_1\geq r_t\geq \alpha|Z_t|\geq r_t-\xi$ so that
\begin{align*}
K_t&\leq\mathbf{C} f\left(r_t\right)G_t+\left(\alpha\xi-r_t+\xi+ \frac{1}{\alpha}\left(L_U+L_W\right)r_t\right)f'\left(r_t\right)G_t \\
&\quad+\left(4 f''\left(r_t\right)G_t+96\epsilon\max\left(\frac{1}{2\alpha},1\right) r_tf'\left(r_t\right)\right)rc\left(Z_t,W_t\right)^2\\
&\quad+L_Wf'\left(r_t\right)\mathbb{E}\left(|Z_t|\right)G_t+\epsilon L_W\left(6+8\lambda\right)\left(\mathbb{E}\left(|X_t|\right)^2+\mathbb{E}(|\tilde{X}_t|)^2\right)f\left(r_t\right),
\end{align*}
where we use \eqref{def_C}. First
\begin{align*}
4 f''\left(r_t\right)G_t+96\epsilon\max\left(\frac{1}{2\alpha},1\right) r_tf'\left(r_t\right)\leq0.
\end{align*}
We use \eqref{cond_alpha} to obtain, since $f\left(r_t\right)\leq\Phi\left(r_t\right)$ and $\frac{1}{2}\phi\left(r_t\right)\leq f'\left(r_t\right)=\phi\left(r_t\right)g\left(r_t\right)\leq \phi\left(r_t\right)$ by \eqref{cond_g_demi_prop},
\begin{align*}
K_t\leq&\xi\left(1+\alpha\right)\phi\left(r_t\right)g\left(r_t\right)G_t +G_t\left(\mathbf{C}\Phi\left(r_t\right)+\frac{1}{2}\left(\frac{1}{\alpha 
}\left(L_U+L_W\right)-1\right)r_t\phi\left(r_t\right)\right)\\
&+L_Wf'\left(r_t\right)\mathbb{E}\left(|Z_t|\right)G_t+\epsilon L_W\left(6+8\lambda\right)\left(\mathbb{E}\left(|X_t|\right)^2+\mathbb{E}(|\tilde{X}_t|)^2\right)f\left(r_t\right).
\end{align*}
Then, thanks to \eqref{cond_region_2_prop}, like in the first region of space
\begin{align*}
K_t\mathds{1}_{\mbox{Reg}_2}\leq \xi\left(1+\alpha\right)\phi\left(r_t\right)g\left(r_t\right)G_t+ L_Wf'\left(r_t\right)\mathbb{E}\left(|Z_t|\right)G_t+\epsilon L_W\left(6+8\lambda\right)\left(\mathbb{E}\left(|X_t|\right)^2+\mathbb{E}(|\tilde{X}_t|)^2\right)f\left(r_t\right).
\end{align*}
Hence, since $\phi\left(r_t\right)g\left(r_t\right)\leq1$

\begin{equation*}
\mathbb{E}K_t\mathds{1}_{\mbox{Reg}_2}\leq \xi\left(1+\alpha\right)\mathbb{E}\left(G_t\right)+L_W\left(\mathcal{C}_K+\mathcal{C}^0_Ke^{-\gamma t}\right)\mathbb{E}\rho_t.
\end{equation*}


\subsubsection{Third region : $r_t> R_1$}\label{region_3}

In this region, we use the Lyapunov function. Here $f'\left(r_t\right)=f''\left(r_t\right)=0$ so that 
\begin{align*}
K_t\mathds{1}_{\mbox{Reg}_3}=&\left(cG_t +\epsilon\left(2B-\gamma  H\left(X_t,V_t\right)-\gamma  H(\tilde{X}_t,\tilde{V}_t)\right)\right)f\left(r_t\right)\mathds{1}_{\mbox{Reg}_3}\\
&+\epsilon L_W\left(6+8\lambda\right)\left(\mathbb{E}\left(|X_t|\right)^2+\mathbb{E}(|\tilde{X}_t|)^2\right)f\left(r_t\right)\mathds{1}_{\mbox{Reg}_3}\\
=&\Big[\epsilon \left(c-\gamma \right)\left(H\left(X_t,V_t\right)+H(\tilde{X}_t,\tilde{V}_t)\right)+2\epsilon B+ c \\
&+\epsilon L_W\left(6+8\lambda\right)\left(\mathbb{E}\left(|X_t|\right)^2+\mathbb{E}(|\tilde{X}_t|)^2\right)\Big]f\left(r_t\right)\mathds{1}_{\mbox{Reg}_3}
\end{align*}
Since $c-\gamma <0$ as a consequence of \eqref{cond_region_3}, and using Lemma~\ref{Prop-H}
\begin{align*}
K_t&\leq\left( \left(c-\gamma \right)\epsilon\frac{12}{5}\frac{B}{\gamma }+2\epsilon B +c \right)f\left(r_t\right)+\epsilon L_W\left(6+8\lambda\right)\left(\mathbb{E}\left(|X_t|\right)^2+\mathbb{E}(|\tilde{X}_t|)^2\right)f\left(r_t\right)\\
&\leq\left(c \left(\frac{12\epsilon}{5}\frac{B}{\gamma }+1\right) -\frac{2}{5}\epsilon B\right)f\left(r_t\right)+\epsilon L_W\left(6+8\lambda\right)\left(\mathbb{E}\left(|X_t|\right)^2+\mathbb{E}(|\tilde{X}_t|)^2\right)f\left(r_t\right).
\end{align*}
Then, using \eqref{cond_region_3}, $\mathbb{E}K_t\mathds{1}_{\mbox{Reg}_3}\leq L_W\mathcal{C}_K\mathbb{E}\rho_t+L_W\mathcal{C}^0_K\mathbb{E}\rho_te^{-\gamma  t}$.

%
%

\subsection{Step three : Convergence}

Let $\Gamma$ be a coupling of $\nu^1_0$ and $\nu^2_0$ such that $\mathbb{E}_{\Gamma}\rho\left((x,v),(\tilde{x},\tilde{v}))\right)<\infty$. We consider the coupling of $\left(X_t,V_t\right)$ and $(\tilde{X}_t,\tilde{V}_t)$, with initial distribution $(\left(X_0,V_0\right),(\tilde{X}_0,\tilde{V}_0))\sim\Gamma$, previously introduced. 
Using Lemma~\ref{lemme-maj-rho} and Lemma~\ref{maj_K}, by taking the expectation in \eqref{maj-rho} at stopping times $\tau_n$ increasingly converging to $t$ , we have by Fatou's lemma for $n\rightarrow\infty$, $\forall 
\xi>0, \forall t\geq0$,
\begin{align}
 e^{ct}\mathbb{E}\rho_t\leq \mathbb{E}\rho_0+\left(1+\alpha\right)\xi \int_0^te^{cs}\mathbb{E}\left(G_s\right)ds\label{maj-esp-avant-xi}
+L_W\mathcal{C}_K \int_0^te^{cs}\mathbb{E}\rho_sds+L_W\mathcal{C}^0_K \int_0^te^{(c-\gamma  )s}\mathbb{E}\rho_sds.
\end{align}
Moreover, using Lemma~\ref{majMoment} and the fact $\gamma >c$, for all $t\geqslant 0$,
\begin{align*}
\mathbb{E}\left(G_t\right)\leq& \left(1+\epsilon\mathcal{C}^0_H+\epsilon\mathcal{C}^0_{\tilde{H}}\right),\qquad
\int_0^te^{(c-\gamma )s}\mathbb{E}\rho_s ds \leq \frac{f(R_1)\left(1+\epsilon\mathcal{C}^0_H+\epsilon\mathcal{C}^0_{\tilde{H}}\right)}{\gamma -c},\\
\int_0^te^{cs}ds=&\frac{c}{c-L_W\mathcal{C}_K}\frac{e^{ct}-1}{c}-\frac{L_W\mathcal{C}_K}{c-L_W\mathcal{C}_K}\int_0^te^{cs}ds,
\end{align*}
we get
\begin{align*}
 e^{ct}&\left(\mathbb{E}\rho_t-\frac{\left(1+\alpha\right)\xi}{c-L_W\mathcal{C}_K}\left(1+\epsilon\mathcal{C}^0_H+\epsilon\mathcal{C}^0_{\tilde{H}}\right)\right)\\
 \leq& \mathbb{E}\rho_0-\frac{\left(1+\alpha\right)\xi}{c-L_W\mathcal{C}_K}\left(1+\epsilon\mathcal{C}^0_H+\epsilon\mathcal{C}^0_{\tilde{H}}\right)+ L_W\mathcal{C}^0_K\frac{f(R_1)\left(1+\epsilon\mathcal{C}^0_H+\epsilon\mathcal{C}^0_{\tilde{H}}\right)}{\gamma -c}\\
 &+L_W\mathcal{C}_K \int_0^te^{cs}\left(\mathbb{E}\rho_s-\frac{\left(1+\alpha\right)\xi}{c-L_W\mathcal{C}_K}\left(1+\epsilon\mathcal{C}^0_H+\epsilon\mathcal{C}^0_{\tilde{H}}\right)\right)ds.
\end{align*}
Gronwall's lemma yields, for all $t\geq 0$
\begin{equation*}
e^{ct}\left(\mathbb{E}\left(\rho_t\right)-\frac{\left(1+\alpha\right)\xi}{c-L_W\mathcal{C}_K}\left(1+\epsilon\mathcal{C}^0_H+\epsilon\mathcal{C}^0_{\tilde{H}}\right)\right)\leq  \left( \mathbb{E}\left(\rho_0\right)+L_W\mathcal{C}^0_K\frac{f(R_1)\left(1+\epsilon\mathcal{C}^0_H+\epsilon\mathcal{C}^0_{\tilde{H}}\right)}{\gamma -c}\right) e^{L_W \mathcal{C}_K t}.
\end{equation*}
Since $\mathcal{W}_\rho\left(\mu_t,\nu_t\right)\leq\mathbb{E}\left(\rho_t\right)$, we have thus obtained for all $t\geq 0$
\begin{equation*}
\mathcal{W}_\rho\left(\nu^1_t,\nu^2_t\right)\leq \frac{\left(1+\alpha\right)\xi}{c-L_W\mathcal{C}_K}\left(1+\epsilon\mathcal{C}^0_H+\epsilon\mathcal{C}^0_{\tilde{H}}\right)+ \left( \mathbb{E}\left(\rho_0\right)+L_W \mathcal{C}^0_K\frac{f(R_1) \left(1+\epsilon\mathcal{C}^0_H+\epsilon\mathcal{C}^0_{\tilde{H}}\right)}{\gamma -c}\right) e^{(L_W \mathcal{C}_K-c) t}
\end{equation*}
Taking the infimum over all couplings $\Gamma$ of the initial conditions and using the fact that the left hand side does not depend on $\xi$, so that we may take $\xi=0$, we get finally that for all $t\geqslant 0$,
\begin{equation}\label{contrac_rho}
\mathcal{W}_\rho\left(\nu^1_t,\nu^2_t\right)\leq  \left( \mathcal{W}_\rho\left(\nu^1_0,\nu^2_0\right)+ L_W \mathcal{C}^0_K\frac{f(R_1) \left(1+\epsilon\mathcal{C}^0_H+\epsilon\mathcal{C}^0_{\tilde{H}}\right)}{\gamma -c}
\right) e^{(L_W \mathcal{C}_K-c) t},
\end{equation}
and since, by Lemma~\ref{rho_1_2}, $\mathcal{C}_1\mathcal{W}_\rho\left(\nu^1_t,\nu^2_t\right)\geq\mathcal{W}_{1}\left(\nu^1_t,\nu^2_t\right)$ and $\mathcal{C}_2\mathcal{W}_\rho\left(\nu^1_t,\nu^2_t\right)\geq\mathcal{W}_{2}^2\left(\nu^1_t,\nu^2_t\right)$, 
\begin{align*}
\mathcal{W}_{1}\left(\nu^1_t,\nu^2_t\right)\leq& e^{-(c-L_W\mathcal{C}_K)t}C^1_{\nu^1_0,\nu^2_0},\\
\mathcal{W}_{2}^2\left(\nu^1_t,\nu^2_t\right)\leq& e^{-(c-L_W\mathcal{C}_K)t}C^2_{\nu^1_0,\nu^2_0}.
\end{align*}
Then, for all $W$ such that $L_W<c/\mathcal{C}_K$, there will be contraction at rate $\tau:=c-L_W\mathcal{C}_K>0$.  So, it only remains for $L_W$ to satisfy
\begin{equation}\label{maj_L_W}
L_W\leq\frac{c}{\mathcal{C}_1\left(1+\frac{2\epsilon B}{\gamma }\right)+\frac{2\epsilon B}{\gamma\lambda }\left(6+8\lambda\right)},
\end{equation}
with
\begin{align*}
\mathcal{C}_1=\max\left(\frac{2}{\alpha},1\right)\max\left(\frac{4\left(\left(1+\alpha\right)^2+\alpha^2\right)}{\epsilon\min\left(\frac{2}{3}\lambda,6\right)f\left(1\right)},\frac{1}{\phi\left(R_1\right)g\left(R_1\right)}\right).
\end{align*}
\begin{remark}
We draw the reader's attention to the fact that Theorem~\ref{contraction_rho} is then a consequence of everything we have done so far : if we have an upper bound on $\mathbb{E}H\left(X_0,V_0\right)+\mathbb{E}H(\tilde{X}_0,\tilde{V}_0)$, the constant $\mathcal{C}_K^0$ in Lemma~\ref{maj_K} can be chosen equal to 0 provided we modify $\mathcal{C}_K$.
\end{remark}
Let us now show that there is existence and uniqueness of a stationary measure. Let $\mathcal{C}^0>\frac{B}{\gamma}$ and $\mu_t$ a solution of \eqref{Liouville} such that $\mathbb{E}_{\mu_0}H\leq\mathcal{C}^0$. Using \eqref{gronwall_H}, for all $t\geq0$, $\mathbb{E}_{\mu_t}H\leq\mathcal{C}^0$. Thanks to Theorem~\ref{contraction_rho}, for $L_W$ sufficiently small, there is $\tau>0$ such that for all $t\geq s\geq0$
\begin{align*}
\mathcal{W}_{\rho}\left(\mu_t,\mu_s\right)\leq e^{-\tau s}\mathcal{W}_{\rho}\left(\mu_{t-s},\mu_0\right)\leq f(R_1)\left(1+2\epsilon\mathcal{C}^0 \right)e^{-\tau s},
\end{align*}
and thus
\begin{align*}
\mathcal{W}_1\left(\mu_t,\mu_s\right)\leq \mathcal{C}_1f(R_1)\left(1+2\epsilon\mathcal{C}^0 \right)e^{-\tau s}.
\end{align*}
The space of probability measure with first moments, equipped with the $\mathcal{W}_1$ distance, being a complete metric space (see for instance \cite{bolley_separability}), and $\mu_t$ being a Cauchy sequence, there exists $\mu_\infty$ such that $${\mathcal{W}_1\left(\mu_t,\mu_\infty\right)\rightarrow 0} \text{ as }t\rightarrow \infty,$$and $\mu_\infty$ stationary. Theorem~\ref{thm_1} then ensures uniqueness and convergence towards this stationary measure.

%
%
%
%

\section{Proof of Theorem~\ref{thm_2}}\label{Preuve_2}

In this section, we show how we obtain similar results for the convergence of the particle system to the non-linear kinetic Langevin diffusion using the same tools. We start by introducing the coupling, the new Lyapunov function, we give a new definition for the various quantities we consider, and then prove contraction of the coupling semimetric.

%
%

\subsection{Coupling}

We consider the following coupling
\begin{equation*}
\left\{
    \begin{array}{ll}
        d\bar{X}^i_t=\bar{V}^i_tdt\\
        d\bar{V}^i_t=-\bar{V}^i_tdt-\nabla U\left(\bar{X}^i_t\right)dt-\nabla W\ast\bar{\mu}_t\left(\bar{X}^i_t\right)dt+\sqrt{2}\left(rc\left(Z^i_t,W^i_t\right)dB^{rc,i}_t+sc\left(Z^i_t,W^i_t\right)dB^{sc,i}_t\right)\\
        \bar{\mu}_t=\mathcal{L}\left(\bar{X}^i_t\right)\\
        dX^{i,N}_t=V^{i,N}_tdt\\
        dV^{i,N}_t=-V^{i,N}_tdt-\nabla U(X^{i,N}_t)dt-\frac{1}{N}\sum_{j=1}^N\nabla W(X^{i,N}_t-X^{j,N}_t)dt\\
        \hspace{2cm}+\sqrt{2}\left(rc\left(Z^i_t,W^i_t\right)\left(Id-2e^i_te^{i,T}_t\right)dB^{rc,i}_t+sc\left(Z^i_t,W^i_t\right)dB^{sc,i}_t\right),
        \end{array}
\right.
\end{equation*}
with, similarly as before,
\begin{align*}
\begin{array}{cccc}
&Z^i_t=\bar{X}^i_t-X^{i,N}_t,\hspace{0.3cm} &W^i_t=\bar{V}^i_t-V^{i,N}_t, \hspace{0.3cm}&Q^i_t=Z^i_t+W^i_t,\hspace{0.3cm}\\
\end{array}
&e^i_t=\left\{
    \begin{array}{ll}
    \frac{Q^i_t}{|Q^i_t|}\text{ if }Q^i_t\neq0,\\
    0\text{ otherwise}.
    \end{array}
\right.
\end{align*}
Let $\mu^N_t:=\frac{1}{N}\sum_{i=1}^N\delta_{X^{i,N}_t}$ be the empirical distribution of the particle system, with i.i.d initial conditions $X^{i,N}_0\sim\nu_0$. We first notice that the particles are exchangeable.
The generator of the process given by the particle system \eqref{FP_part} 
is, for a function $\phi$ of $(x_1,...,x_N,v_1,...,v_N)$ 
\begin{align*}
\mathcal{L}^N\phi=\sum_{i=1}^N\mathcal{L}^{i,N}\phi,
\end{align*}
with
\begin{align*}
\mathcal{L}^{i,N}\phi=v_i\cdot\nabla_{x_i}\phi-v_i\cdot\nabla_{v_i}\phi-\nabla U\left(x_i\right)\cdot\nabla_{v_i}\phi-\frac{1}{N}\sum_{j=1}^N\nabla W\left(x_i-x_j\right)\cdot\nabla_{v_i}\phi+\Delta_{v_i}\phi.
\end{align*}
We define
\begin{align}
r^i_t=&\alpha|Z^i_t|+|Q^i_t|,\\
\tilde{H}(x,v)=&\int_{0}^{H(x,v)}\exp\left(a\sqrt{u}\right)du\\
\rho_t=&\frac{1}{N}\sum_{i=1}^Nf\left(r^i_t\right)\Big(1+\epsilon \tilde{H}\left(\bar{X}^i_t,\bar{V}^i_t\right)+\epsilon  \tilde{H}(X^{i,N}_t,V^{i,N}_t)\nonumber\\
&\hspace{2.5cm}+\frac{\epsilon}{N}\sum_{j=1}^N  \tilde{H}\left(\bar{X}^j_t,\bar{V}^j_t\right)+\frac{\epsilon}{N}\sum_{j=1}^N  \tilde{H}(X^{j,N}_t,V^{j,N}_t)\Big),\label{def_rho_part}\\
:=&\frac{1}{N}\sum_{i=1}^Nf\left(r^i_t\right)G^i_t.
\end{align}

%
%

\subsection{A modified Lyapunov function}

Notice how in the expression of $G^i$ above we did not consider the Lyapunov function $H$, but instead $\tilde{H}$. Let us assume there exist $\mathcal{C}_0, a>0$ such that $\mathbb{E}_{\nu_0}\left(\tilde{H}(X,V)^2\right)\leq \left(\mathcal{C}_0\right)^2$ (which is equivalent to the existence of $\tilde{\mathcal{C}^0}, \tilde{a}>0$ such that ${\mathbb{E}_{\nu_0}\left(e^{\tilde{a}\left(|X|+|V|\right)}\right)\leq\tilde{\mathcal{C}^0}}$, as it was stated in Theorem~\ref{thm_2} ).
First, notice
\begin{align*}
\tilde{H}(x,v)=\int_{0}^{H(x,v)}\exp\left(a\sqrt{u}\right)du=\frac{2}{a^2}\exp\left(a\sqrt{H(x,v)}\right)\left(a\sqrt{H(x,v)}-1\right)+\frac{2}{a^2}.
\end{align*}
Direct calculations yield the following technical lemma.
\begin{lemma}
We have, for all $x,v\in\mathbb{R}^d$
\begin{align}
H(x,v)\exp\left(a\sqrt{H(x,v)}\right)\geq \tilde{H}(x,v)\geq& \exp\left(a\sqrt{H(x,v)}\right) -\frac{2}{a^2}\left(\exp\left(\frac{a^2}{2}\right)-1\right)\label{control_tilde_H},\\
\frac{2}{a}\sqrt{H(x,v)}\exp\left(a\sqrt{H(x,v)}\right)\geq \tilde{H}(x,v)\geq&\frac{1}{a}\sqrt{H(x,v)}\exp\left(a\sqrt{H(x,v)}\right)-\frac{1}{a^2}\left(e-2\right)\label{control_tilde_H_2},\\
\tilde{H}(x,v)\geq& H(x,v)\label{control_tilde_H_3}
\end{align}
\end{lemma}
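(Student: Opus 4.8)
The plan is to reduce every one of the six inequalities to elementary single–variable calculus, starting from the closed form displayed just above,
\[
\tilde H(x,v)=\frac{2}{a^2}\Bigl(e^{a\sqrt{H(x,v)}}\bigl(a\sqrt{H(x,v)}-1\bigr)+1\Bigr),
\]
which is obtained from $\tilde H(x,v)=\int_0^{H(x,v)}e^{a\sqrt u}\,du$ by the substitution $t=\sqrt u$ and one integration by parts. Writing $h=H(x,v)$, which is non‑negative by Lemma~\ref{Lya}, and $s=\sqrt h\geqslant 0$, each claimed bound becomes a comparison between explicit functions of $s$ (or of $t:=as\geqslant 0$).

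The upper bounds follow merely from the monotonicity of $u\mapsto e^{a\sqrt u}$. Since $1\leqslant e^{a\sqrt u}\leqslant e^{a\sqrt h}$ on $[0,h]$, integrating gives at once $h\leqslant\tilde H\leqslant h\,e^{a\sqrt h}$, which is \eqref{control_tilde_H_3} together with the upper bound in \eqref{control_tilde_H}. For the upper bound $\tilde H\leqslant\frac2a\sqrt H\,e^{a\sqrt H}$ in \eqref{control_tilde_H_2}, I would insert the closed form and check that the assertion simplifies to $e^{as}(as-1)+1\leqslant as\,e^{as}$, i.e.\ to $e^{as}\geqslant1$, which is immediate.

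The two lower bounds are the only place where an optimization is needed. For the lower bound in \eqref{control_tilde_H}, subtracting $e^{a\sqrt H}-\frac2{a^2}(e^{a^2/2}-1)$ from $\tilde H$, using the closed form and clearing the common factor $1/a^2$, the claim becomes $\varphi(s)\leqslant 2e^{a^2/2}$ with $\varphi(s):=e^{as}(a^2+2-2as)$. One then computes $\varphi'(s)=a^2e^{as}(a-2s)$, so $\varphi$ increases on $[0,a/2]$ and decreases afterwards, whence $\max_{s}\varphi=\varphi(a/2)=2e^{a^2/2}$, which is exactly the required bound. Similarly, the lower bound in \eqref{control_tilde_H_2} reduces, after the same manipulation and the substitution $t=as$, to $g(t):=e^t(t-2)\geqslant -e$; since $g'(t)=e^t(t-1)$, the minimum of $g$ is attained at $t=1$ with $g(1)=-e$, giving the claim.

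There is no genuine obstacle here: the statement is a bundle of sharp but elementary estimates. The only care needed is bookkeeping — correctly carrying out the integration by parts that yields the closed form, and recognizing the two auxiliary functions $\varphi$ and $g$ whose global extrema reproduce precisely the constants $\frac2{a^2}(e^{a^2/2}-1)$ and $\frac1{a^2}(e-2)$ appearing in the statement. As a side remark, all the resulting one‑variable inequalities in fact hold for every real $s$, so no case distinction on the range of $H$ is required.
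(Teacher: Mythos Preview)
Your proof is correct and supplies precisely the ``direct calculations'' that the paper invokes but omits: the closed form for $\tilde H$ via the substitution $t=\sqrt u$ and integration by parts, the monotonicity argument for the three upper bounds and \eqref{control_tilde_H_3}, and the two one-variable optimizations of $\varphi(s)=e^{as}(a^2+2-2as)$ and $g(t)=e^t(t-2)$ that recover the sharp constants in the lower bounds of \eqref{control_tilde_H} and \eqref{control_tilde_H_2}. There is nothing to compare against, as the paper gives no proof beyond that phrase.
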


We may calculate, using \eqref{maj_par_H} and \eqref{Lya_H}
\begin{align}
\mathcal{L}_\mu\left(\tilde{H}\right)&=\exp\left(a\sqrt{H}\right)\mathcal{L}_\mu H+\frac{a}{2\sqrt{H}}\exp\left(a\sqrt{H}\right)|\nabla_vH|^2\nonumber\\
&=\exp\left(a\sqrt{H}\right)\mathcal{L}_\mu H+24^2\frac{a}{2\sqrt{H}}\exp\left(a\sqrt{H}\right)\left|\frac{x}{2}+v\right|^2\nonumber\\
&\leq \exp\left(a\sqrt{H}\right)\left(B+L_W\left(6+8\lambda\right)\mathbb{E}_{\mu}\left(|x|\right)^2-\left(\frac{3}{4}\lambda+\lambda^2\right)|x|^2-\gamma H\right)+24a\sqrt{H}\exp\left(a\sqrt{H}\right)\nonumber\\
&\leq \exp\left(a\sqrt{H}\right)\left(B+\frac{288a^2}{\gamma}+L_W\left(6+8\lambda\right)\mathbb{E}_{\mu}\left(|x|\right)^2-\frac{\gamma}{2} H\right),\label{dyn_exp_H}
\end{align}
where for this last inequality we used Young's inequality $24a\sqrt{H}\leq \frac{\gamma}{2}H+288\frac{a^2}{\gamma}$.

Notice that \eqref{dyn_exp_H} ensures that this new Lyapunov function also tends to bring back particle which ventured at infinity, and at an even greater rate. This new rate $H\exp(\sqrt{H})$ however comes at a cost : the initial condition must have a finite exponential moment, and not just a finite second moment as in Section~\ref{Preuve_1}.

First, by \eqref{gronwall_H} and  \eqref{control_tilde_H_3}, $$\mathbb{E}(|\bar{X}^i_t|)^2\leq \frac{1}{\lambda} \mathbb{E}\left(H\left(\bar{X}^i_t,\bar{V}^i_t\right)\right)\leq \frac{1}{\lambda}\left(\frac{B}{\gamma}+\mathbb{E}H\left(\bar{X}^i_0,\bar{V}^i_0\right)\right) \leq\frac{1}{\lambda}\left(\frac{B}{\gamma}+\mathbb{E}\tilde{H}\left(\bar{X}^i_0,\bar{V}^i_0\right)\right)\leq \frac{1}{\lambda}\left(\frac{B}{\gamma}+\mathcal{C}^0\right).$$
Furthermore, the function $h\mapsto \exp\left(a\sqrt{h}\right)\left(\tilde{B}-\frac{\gamma}{4}h\right)$ is bounded from above for $h\geq0$ and $\tilde{B}\in\mathbb{R}$. We therefore obtain from \eqref{dyn_exp_H} the existence of $\tilde{B}$ such that 
\begin{align}
\mathcal{L}_{\bar{\mu}_t^{\otimes N}} \tilde{H}\left(x_i,v_i\right)\leq& \tilde{B}-\frac{\gamma}{4}\left(H\left(x_i,v_i\right)\exp\left(a\sqrt{H\left(x_i,v_i\right)}\right)\right)\label{dyn_tilde_H_non_lin}\\
\frac{d}{dt}\mathbb{E} \tilde{H}\left(\bar{X}^i_t,\bar{V}^i_t\right)\leq& \tilde{B}-\frac{\gamma}{4}\mathbb{E}\left(H\left(\bar{X}^i_t,\bar{V}^i_t\right)\exp\left(a\sqrt{H\left(\bar{X}^i_t,\bar{V}^i_t\right)}\right)\right)\label{dyn_esp_exp_H}\\
\text{and}\ \ \ \frac{d}{dt}\mathbb{E} \tilde{H}\left(\bar{X}^i_t,\bar{V}^i_t\right)\leq&\tilde{B}-\frac{\gamma }{4}\mathbb{E} \tilde{H}\left(\bar{X}^i_t,\bar{V}^i_t\right),\label{Gronwall_exp_H}
\end{align}
where for this last inequality, we used \eqref{control_tilde_H}. While \eqref{dyn_tilde_H_non_lin} and \eqref{dyn_esp_exp_H} will be useful in ensuring a sufficient restoring force, Equation~\eqref{Gronwall_exp_H} give us a uniform in time bound on $\mathbb{E} \tilde{H}\left(\bar{X}^i_t,\bar{V}^i_t\right)$, provided we have an initial bound.
\vspace{0.5cm}

Now, for the system of particle, we have, using \eqref{dyn_exp_H}, $\forall i\in\{1,...,N\},\ \forall x_i,v_i\in\mathbb{R}^d,$
\begin{align*}
\mathcal{L}^{i,N}\tilde{H}\left(x_i,v_i\right)\leq& \exp\left(a\sqrt{H\left(x_i,v_i\right)}\right)\left(B+\frac{288a^2}{\gamma}+L_W\left(6+8\lambda\right)\left(\frac{\sum_{j=1}^N|x_j|}{N}\right)^2-\frac{\gamma}{2}  H\left(x_i,v_i\right)\right).
\end{align*}
Summing over $i\in\{1,..,N\}$, we may calculate
\begin{align}
&L_W\left(6+8\lambda\right)\sum_{j=1}^N\left(\frac{\sum_{j=1}^N|x_j|}{N}\right)^2\sum_{i=1}^N\frac{\exp\left(a\sqrt{H\left(x_i,v_i\right)}\right)}{N}
-\frac{\gamma}{8}\sum_{i=1}^N\frac{H\left(x_i,v_i\right)\exp\left(a\sqrt{H\left(x_i,v_i\right)}\right)}{N}\nonumber\\
\leq&\frac{\gamma}{8}\Big(\sum_{i,j=1}^N\frac{H\left(x_i,v_i\right)}{N}\frac{\exp\left(a\sqrt{H\left(x_j,v_j\right)}\right)}{N} -\sum_{i=1}^N\frac{H\left(x_i,v_i\right)\exp\left(a\sqrt{H\left(x_i,v_i\right)}\right)}{N}\Big)\
\leq\  0\label{annuler_somme_non_line}.
\end{align}
Here, we used \eqref{maj_par_H}, the fact that $\forall x,y\geq0\ \ xe^{a\sqrt{y}}+ye^{a\sqrt{x}}-xe^{a\sqrt{x}}-ye^{a\sqrt{y}}=(e^{a\sqrt{x}}-e^{a\sqrt{y}})(y-x)\leq0$ and assumed $$6\frac{L_W}{\lambda}\left(1+\frac{4}{3}\lambda\right)\leq \frac{\gamma}{8}\ \ \text{ i.e }\ \ L_W\leq\frac{\gamma\lambda}{16(3+4\lambda)}.$$
Likewise, there is a constant, which for the sake of clarity we will also denote $\tilde{B}$ (as we may take the maximum of the previous constants), such that we get 
\begin{align}
\mathcal{L}^{i,N}\tilde{H}(x_i,v_i)\leq&\tilde{B}+L_W\left(6+8\lambda\right)\left(\frac{\sum_{j=1}^N|x_i|}{N}\right)^2\exp\left(a\sqrt{H\left(x_i,v_i\right)}\right)\nonumber\\
&-\frac{\gamma}{4} H\left(x_i,v_i\right)\exp\left(a\sqrt{H\left(x_i,v_i\right)}\right)\label{dyn_part_tilde_H}\\
\mathcal{L}^N\left(\frac{1}{N}\sum_{i=1}^N\tilde{H}(x_i,v_i)\right)\leq& \tilde{B}-\frac{\gamma}{4}\left(\frac{1}{N}\sum_{i=1}^NH(x_i,v_i)\exp\left(a\sqrt{H\left(x_i,v_i\right)}\right)\right)\label{gen_part_sum}
\end{align}
and
\begin{align}
\mathcal{L}^N&\left(\frac{1}{N}\sum_{i=1}^N\tilde{H}(x_i,v_i)\right)\leq \tilde{B}-\frac{\gamma}{4}\left(\frac{1}{N}\sum_{i=1}^N\tilde{H}(x_i,v_i)\right)\label{gron_part_exp}
\end{align}
Once again, \eqref{dyn_part_tilde_H} and \eqref{gen_part_sum} will be ensure a sufficient restoring force, and \eqref{gron_part_exp} ensures a uniform in time bound on the expectation of $\tilde{H}(X^{i,N}_t,V^{i,N}_t)$, since $\mathbb{E}\left(\frac{1}{N}\sum_{j=1}^N\tilde{H}(X^{j,N}_t,V^{j,N}_t)\right)=\mathbb{E}\left(\tilde{H}(X^{i,N}_t,V^{i,N}_t)\right)$ by exchangeability of the particles.

More precisely, we obtain from \eqref{Gronwall_exp_H} and \eqref{gron_part_exp} the direct corollary

\begin{lemma}\label{control_G}
Provided the initial expectations $\mathbb{E}\left(G^1_0\right)$ and $\mathbb{E}\left(\left(G^1_0\right)^2\right)$  are finite, there are constants  $\mathcal{C}_{G,1}$ and $\mathcal{C}_{G,2}$, depending on initial conditions, such that for all $t\geq0$, for all $N\geq0$, and all $i$
\begin{align*}
\mathbb{E}\left(G^i_t\right)\leq\mathcal{C}_{G,1}
\ \ \text{ 
and
 }\ \ 
\mathbb{E}\left(\left(G^i_t\right)^2\right)\leq\mathcal{C}_{G,2}.
\end{align*}
\end{lemma}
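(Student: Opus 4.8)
The plan is to reduce everything, via the exchangeability of the particles noted above and two elementary convexity inequalities, to a control of the first and second moments of $\tilde H$ along the non-linear copies $(\bar X^i_t,\bar V^i_t)$ and along the particle system $(X^{i,N}_t,V^{i,N}_t)$ that is uniform in $t$, $N$ and $i$. Expanding \eqref{def_rho_part} and taking expectations, exchangeability gives $\mathbb{E}(G^i_t)=1+2\epsilon\,\mathbb{E}\tilde H(\bar X^1_t,\bar V^1_t)+2\epsilon\,\mathbb{E}\tilde H(X^{1,N}_t,V^{1,N}_t)$, so the first assertion only needs those two means bounded. For the second, applying $\big(\sum_{k=1}^5 a_k\big)^2\le 5\sum_{k=1}^5 a_k^2$ to the five summands of $G^i_t$, then Jensen's inequality $\big(\frac1N\sum_j a_j\big)^2\le\frac1N\sum_j a_j^2$ to the two empirical averages and exchangeability again, one obtains $\mathbb{E}\big((G^i_t)^2\big)\le 5+10\epsilon^2\big(\mathbb{E}\tilde H(\bar X^1_t,\bar V^1_t)^2+\mathbb{E}\tilde H(X^{1,N}_t,V^{1,N}_t)^2\big)$, so one is left with the corresponding second moments. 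Finiteness of $\mathbb{E}(G^1_0)$ and $\mathbb{E}((G^1_0)^2)$ precisely furnishes finite initial data for all four quantities, since $\epsilon\,\tilde H(\bar X^1_0,\bar V^1_0)\le G^1_0-1$ and $\epsilon\,\tilde H(X^{1,N}_0,V^{1,N}_0)\le G^1_0-1$ pointwise.

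For the first moments I would integrate the Gronwall-type estimates already at hand. Inequality \eqref{Gronwall_exp_H} directly gives $\frac{d}{dt}\mathbb{E}\tilde H(\bar X^1_t,\bar V^1_t)\le\tilde B-\frac\gamma4\mathbb{E}\tilde H(\bar X^1_t,\bar V^1_t)$, and \eqref{gron_part_exp}, after passing from the generator inequality to the ODE inequality by Dynkin's formula along a localizing sequence of stopping times and Fatou's lemma (exactly as in Step three of Section~\ref{Preuve_1}), gives the same inequality for $\mathbb{E}\tilde H(X^{1,N}_t,V^{1,N}_t)=\mathbb{E}\big(\frac1N\sum_i\tilde H(X^{i,N}_t,V^{i,N}_t)\big)$ (exchangeability for the last identity). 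Gronwall's lemma then bounds both by $\frac{4\tilde B}\gamma+\epsilon^{-1}(\mathbb{E}(G^1_0)-1)$ uniformly in $t,N,i$, which yields $\mathcal{C}_{G,1}$.

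The second moments rest on the fact that $\tilde H^2$ is again a Lyapunov function, with an even stronger restoring drift. I would introduce $\tilde H_2(x,v):=\int_0^{H(x,v)}\sqrt u\,e^{2a\sqrt u}\,du$, which dominates $\tilde H^2$: viewing $\tilde H$ as the function $h\mapsto\int_0^h e^{a\sqrt u}\,du$ of $h=H(x,v)$, one has $\frac{d}{dh}\tilde H^2=2e^{a\sqrt h}\int_0^h e^{a\sqrt u}\,du\le\frac4a\sqrt h\,e^{2a\sqrt h}=\frac4a\tilde H_2'$ by the upper bound in \eqref{control_tilde_H_2}, and both $\tilde H^2$ and $\tilde H_2$ vanish at $h=0$, hence $\tilde H^2\le\frac4a\tilde H_2$. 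I would then redo, with $\tilde H_2$ in place of $\tilde H$, the computation leading to \eqref{dyn_exp_H}--\eqref{gron_part_exp}: using \eqref{Lya_H} and its particle counterpart, the bound $|\nabla_v H|^2\le 48H$ coming from \eqref{maj_par_H}, a Young inequality to absorb the extra $\sqrt H$-type terms produced by the $\Delta_v$ part, the uniform-in-$t$ moment bound $\big(\mathbb{E}_{\bar\mu_t}|x|\big)^2\le\frac1\lambda\big(\frac B\gamma+\mathbb{E}_{\nu_0}\tilde H\big)$ (from \eqref{gronwall_H}, \eqref{maj_par_H}, \eqref{control_tilde_H_3}, as already done in the excerpt) to handle the mean-field term in the non-linear generator, and — crucially — the monotonicity of the weight $u\mapsto\sqrt u\,e^{2a\sqrt u}$, so that the rearrangement cancellation of \eqref{annuler_somme_non_line} still swallows the mean-field contribution $\big(\frac1N\sum_j|x_j|\big)^2\le\frac1{\lambda N}\sum_j H_j$ in the particle generator, provided $L_W$ obeys a smallness condition of the same nature as the one imposed before \eqref{dyn_part_tilde_H}. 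This produces $\mathcal{L}_{\bar\mu_t}\tilde H_2\le\hat B-\hat\gamma\,\tilde H_2$ and $\mathcal{L}^N\big(\frac1N\sum_i\tilde H_2(x_i,v_i)\big)\le\hat B-\hat\gamma\,\frac1N\sum_i\tilde H_2(x_i,v_i)$ for suitable $\hat B,\hat\gamma>0$. Gronwall (as in the previous paragraph) then bounds $\mathbb{E}\tilde H_2(\bar X^1_t,\bar V^1_t)$ and $\mathbb{E}\tilde H_2(X^{1,N}_t,V^{1,N}_t)$ uniformly in $t,N,i$ — their initial values being finite because $\tilde H_2\le C(\tilde H^2+1)$ — and $\tilde H^2\le\frac4a\tilde H_2$ delivers the sought second-moment bounds, whence $\mathcal{C}_{G,2}$.

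I expect the main obstacle to be precisely this last step: establishing an $N$-uniform Lyapunov inequality for $\tilde H_2$ (equivalently for $\tilde H^2$) for the interacting system, i.e. verifying that the mean-field term stays dominated once the heavier exponential weight has been inserted — which is where the monotonicity of the new weight, the elementary inequality $(\sqrt x\,e^{2a\sqrt x}-\sqrt y\,e^{2a\sqrt y})(y-x)\le 0$ for $x,y\ge 0$, and a possibly adjusted smallness threshold on $L_W$, absorbed into the constant $c^W$ of Theorem~\ref{thm_2}, come into play. The remaining ingredients — the Dynkin/Fatou passage, the convexity inequalities, the repeated use of exchangeability — are routine bookkeeping.
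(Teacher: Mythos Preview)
Your argument is correct and in fact supplies more than the paper does: the paper simply declares the lemma a ``direct corollary'' of \eqref{Gronwall_exp_H} and \eqref{gron_part_exp} and gives no further details. For the first moment $\mathcal{C}_{G,1}$ this is indeed immediate and your reduction via exchangeability to those two Gronwall inequalities is exactly what the paper has in mind. For the second moment $\mathcal{C}_{G,2}$ the paper leaves the argument implicit; your introduction of the auxiliary Lyapunov function $\tilde H_2(x,v)=\int_0^{H(x,v)}\sqrt u\,e^{2a\sqrt u}\,du$, the domination $\tilde H^2\le\frac4a\tilde H_2$, and the observation that the rearrangement cancellation \eqref{annuler_somme_non_line} goes through unchanged for the monotone weight $u\mapsto\sqrt u\,e^{2a\sqrt u}$ (so that the same computations leading to \eqref{dyn_exp_H}--\eqref{gron_part_exp} produce a Gronwall inequality for $\tilde H_2$) is a clean and legitimate way to fill this gap. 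The extra smallness condition on $L_W$ that this generates is of the same form as the one already imposed and is harmlessly absorbed into $c^W$, as you note.
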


Finally, since $\tilde{H}(x,v)\geq H(x,v)$, Lemma~\ref{rho_1_2} still holds for our new semimetric.

%
%

\subsection{New parameters}

For the sake of completeness, and since this is similar to Section~\ref{subsection_cstes}, we quickly give some explicit parameters that satisfy the various conditions arising from calculation. T hese parameters are far from optimal, and are just given to show that every constant is explicit. Let $\tilde{B}$ be given by \eqref{dyn_tilde_H_non_lin}-\eqref{Gronwall_exp_H}, and \eqref{dyn_part_tilde_H}-\eqref{gron_part_exp}. Define
$$\alpha=L_U+\frac{\lambda}{4}, \ \  R_0=\sqrt{\frac{160\tilde{B}}{\gamma\min\left(\frac{\lambda}{3},3\right)}}\ \  \text{ and }\ \ R_1=\sqrt{(1+\alpha)^2+\alpha^2}R_0.$$
Recall the definition of $C_{dH,1}$ and $C_{dH,2}$ in \eqref{lemma_dif_H}. Denoting
\begin{align*}
C_{f,1}&=8\left(\left(\frac{96}{a^2}\max\left(1,\frac{1}{2\alpha}\right)+\frac{16\sqrt{3}}{a}C_{dH,1}\right)\left(\exp\left(\frac{a^2}{2}\right)-1\right)+16\sqrt{3}(e-2)C_{dH,2}\right)\\
C_{f,2}&=8\left(24\max\left(1,\frac{1}{2\alpha}\right)+4\sqrt{3}C_{dH,1}a+8\sqrt{3}C_{dH,2}a^2\right)
\end{align*}
we set 
\begin{align*}
c=&\left\{ \frac{1}{12}\min\left(2\sqrt{\frac{L_U+L_W}{2\pi\alpha R_1^2}}, \frac{1}{2}\left(1-\frac{L_U+L_W}{\alpha}\right)\right)\exp\left(-\frac{1}{8}\left(\frac{L_U+L_W}{\alpha}+\alpha+C_{f,1}+C_{f,2}\right)R_1^2\right),\right.\\&\hspace{1cm}\left.\frac{2\tilde{B}}{5}, \frac{\gamma}{800}\right\},
\end{align*}
and $\epsilon=\frac{5c}{2\tilde{B}}$. For $s\geq 0$,
\[\phi\left(s\right)=\exp\left(-\frac{1}{8}\left(\frac{1}{\alpha}\left(L_U+L_W\right)+\alpha+\epsilon C_{f,1}+C_{f,2}\right)s^2\right), \qquad \Phi\left(s\right)=\int_0^{s}\phi\left(u\right)du\]
\[g\left(s\right)=1-\frac{c+2\epsilon\tilde{B}}{2}\int_0^{s}\frac{\Phi\left(u\right)}{\phi\left(u\right)} du \,,\qquad
f\left(s\right)=\int_0^{\min\left(s,R_1\right)}\phi\left(u\right)g\left(u\right)du.\]
This way we satisfy the following conditions
\begin{align*}
c\leq&\frac{\gamma}{160}\left(1-\frac{\gamma}{80\epsilon\tilde{B}+\gamma}\right)\\
\alpha>&L_U+L_W\\
\epsilon\leq&1\\
2c+4\epsilon\tilde{B}\leq&2\left(\int_0^{R_1}\frac{\Phi(u)}{\phi(u)}du\right)^{-1}\\
2c+4\epsilon\tilde{B}\leq&\frac{1}{2}\left(1-\frac{L_U+L_W}{\alpha}\right)\inf_{r\in]0,R_1]}\frac{r\phi(r)}{\Phi(r)}\\
\forall s\geq0, 0=&4\phi'(s)+\left(\frac{1}{\alpha}\left(L_U+L_W\right)+\alpha+\epsilon C_{f,1}+C_{f,2}\right)s\phi(s)
\end{align*}

%
%

\subsection{Convergence}

The goal of the section is to prove the following result
\begin{theorem}\label{thm_2_int}
Let $U\in\mathcal{C}^1\left(\mathbb{R}^{d}\right)$ satisfy Assumption~\ref{HypU} and Assumption~\ref{HypU_lip}. For all $W\in\mathcal{C}^1\left(\mathbb{R}^{d}\right)$ satisfying Assumption~\ref{HypW} with
\begin{align}
L_W\leq\min\left(\frac{\gamma\lambda}{16(3+4\lambda)},\frac{c}{\mathcal{C}_1},\frac{\gamma}{64\mathcal{C}_z},\frac{\gamma a}{256\mathcal{C}_z\epsilon }\right),\label{cond_c_0}
\end{align}
and for all probability measures $\bar{\nu}_0$ on $\mathbb{R}^{2d}$ such that $\mathbb{E}_{\bar{\nu}_0}\tilde{H}^2(X,V)\leq (\mathcal{C}^0)^2$, for all $N$, $ \xi>0$, and $t\geq0, $
\begin{align*}
 e^{ct}\mathbb{E}\left(\rho_t\right)\leq&\mathbb{E}\left(\rho_0\right)+\xi\left(1+\alpha\right)\mathcal{C}_{G,1}\int_{0}^te^{cs}ds+L_W\frac{\mathcal{C}^0\mathcal{C}_{G,2}^{1/2}}{\lambda}\sqrt{\frac{8}{N}}\int_{0}^te^{cs}ds.
\end{align*}
\end{theorem}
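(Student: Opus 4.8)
The plan is to transpose to the coupled system of $N$ nonlinear processes $(\bar X^i,\bar V^i)$ and the $N$-particle system $(X^{i,N},V^{i,N})$ the three-step argument of Section~\ref{Preuve_1}. The one genuinely new feature is that the nonlinear drift $\nabla W\ast\bar\mu_t(\bar X^i_t)$ must be compared with the empirical drift $\frac1N\sum_j\nabla W(X^{i,N}_t-X^{j,N}_t)$; this difference splits into a \emph{coupling} part, Lipschitz in the particle differences $Z^j_t=\bar X^j_t-X^{j,N}_t$, and a \emph{fluctuation} part measuring the gap between the empirical measure of the i.i.d.\ samples $\bar X^j_t$ and their common law $\bar\mu_t$, and it is only the latter that produces the $1/\sqrt N$ rate.

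First I would apply It\^o's formula to $e^{ct}\rho_t=e^{ct}\frac1N\sum_{i=1}^N f(r^i_t)G^i_t$. For each $i$ the computation of Lemma~\ref{dyn_Q_t} applies verbatim and gives $d|Q^i_t|=-e^i_t\cdot(\nabla U(\bar X^i_t)-\nabla U(X^{i,N}_t))\,dt-e^i_t\cdot\big(\nabla W\ast\bar\mu_t(\bar X^i_t)-\frac1N\sum_j\nabla W(X^{i,N}_t-X^{j,N}_t)\big)\,dt+2\sqrt2\,rc(Z^i_t,W^i_t)e^i_t\cdot dB^{rc,i}_t$; combined with $\frac{d}{dt}|Z^i_t|\le|Q^i_t|-|Z^i_t|$, It\^o applied to the concave function $f$, and then to the product with $G^i_t$, whose drift and cross-variation terms are controlled by the Lyapunov estimates \eqref{dyn_tilde_H_non_lin}, \eqref{dyn_part_tilde_H}, \eqref{gen_part_sum}, the cancellation \eqref{annuler_somme_non_line} and the bound of Lemma~\ref{lemma_dif_H} — this yields $e^{ct}\rho_t\le\rho_0+\int_0^te^{cs}\frac1N\sum_iK^i_s\,ds+M_t$ with $M_t$ a continuous local martingale and $K^i_t$ of the same shape as in Lemma~\ref{lemme-maj-rho}, the interaction term now being $-f'(r^i_t)G^i_t\,e^i_t\cdot\big(\nabla W\ast\bar\mu_t(\bar X^i_t)-\frac1N\sum_j\nabla W(X^{i,N}_t-X^{j,N}_t)\big)$. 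Here one also uses, exactly as in \eqref{annuler_somme_non_line}, that the terms $\epsilon L_W(6+8\lambda)\big(\frac1N\sum_j|X^{j,N}_t|\big)^2\exp(a\sqrt{H(X^{i,N}_t,V^{i,N}_t)})$ and their nonlinear analogues are absorbed into the dissipation $-\frac{\gamma}{4} H\exp(a\sqrt H)$ of $\tilde H$, which forces $L_W\le\frac{\gamma\lambda}{16(3+4\lambda)}$.

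I would then write $\nabla W\ast\bar\mu_t(\bar X^i_t)-\frac1N\sum_j\nabla W(X^{i,N}_t-X^{j,N}_t)=\frac1N\sum_j\big(\nabla W(\bar X^i_t-\bar X^j_t)-\nabla W(X^{i,N}_t-X^{j,N}_t)\big)+I^i_t$, where $I^i_t:=\nabla W\ast\bar\mu_t(\bar X^i_t)-\frac1N\sum_j\nabla W(\bar X^i_t-\bar X^j_t)$. The first bracket has norm at most $L_W|Z^i_t|+\frac{L_W}{N}\sum_j|Z^j_t|$, the exact analogue of the $L_W|Z_t|+L_W\mathbb E|Z_t|$ of Section~\ref{Preuve_1}. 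For $I^i_t$, since the $\bar X^j_t$ are i.i.d.\ with law $\bar\mu_t$, $\nabla W(\bar X^i_t-\cdot)$ has $\bar\mu_t$-mean $\nabla W\ast\bar\mu_t(\bar X^i_t)$ and $\nabla W(0)=0$, conditioning on $\bar X^i_t$ and using $|\nabla W(z)|\le L_W|z|$ together with the uniform-in-time moment bound $\mathbb E|\bar X^i_t|^2\le\frac1\lambda(\frac{B}{\gamma}+\mathcal C^0)$ (from \eqref{gronwall_H} and \eqref{control_tilde_H_3}) gives $\mathbb E|I^i_t|^2=O\big(L_W^2\mathcal C^0/(\lambda N)\big)$; combined with Lemma~\ref{control_G}, exchangeability and Cauchy--Schwarz this bounds $\mathbb E\big(\frac1N\sum_i f'(r^i_t)G^i_t|I^i_t|\big)$ by $L_W\frac{\mathcal C^0\mathcal C_{G,2}^{1/2}}{\lambda}\sqrt{\frac{8}{N}}$.

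Finally I would carry out, for each index $i$, the region decomposition of Section~\ref{contraction}, using Lemma~\ref{lem:f} and the constraints on the parameters listed above: in $\mbox{Reg}_1$ and $\mbox{Reg}_2$ the $f''$, $\mathbf{C}f$ and reflection terms cancel by the choice of $\phi$ and $g$, and in $\mbox{Reg}_3$ one uses $f'=f''=0$ and the strong dissipation of $\tilde H$, so that there remain only the $\xi(1+\alpha)\frac1N\sum_iG^i_t$ term of $\mbox{Reg}_2$, the fluctuation contribution estimated above, and the coupling terms $L_W f'(r^i_t)G^i_t\big(|Z^i_t|+\frac1N\sum_j|Z^j_t|\big)$. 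The latter are handled through the fourth inequality of Lemma~\ref{rho_1_2}, namely $|Z^i_t|\le\mathcal C_z f(r^i_t)\big(1+\epsilon\sqrt{H(\bar X^i_t,\bar V^i_t)}+\epsilon\sqrt{H(X^{i,N}_t,V^{i,N}_t)}\big)$: multiplying by $G^i_t$ creates terms such as $\epsilon^2\sqrt{H^i_t}\,\tilde H^i_t\le\frac{2\epsilon^2}{a}H^i_t\exp(a\sqrt{H^i_t})$ (by \eqref{control_tilde_H_2}) and inter-particle cross terms which, thanks to the symmetric averages $\frac{\epsilon}{N}\sum_j\tilde H^j$ present in $G^i_t$ in \eqref{def_rho_part} and the Chebyshev sum inequality $\big(\frac1N\sum_i\sqrt{H^i_t}\big)\big(\frac1N\sum_j\tilde H^j_t\big)\le\frac1N\sum_i\sqrt{H^i_t}\,\tilde H^i_t$, are all absorbed into the $\tilde H$-dissipation provided $L_W\le\min\big(\frac{\gamma}{64\mathcal C_z},\frac{\gamma a}{256\mathcal C_z\epsilon}\big)$. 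Taking expectations, reducing sums to a single representative index by exchangeability and using the bound $\mathbb E(G^i_t)\le\mathcal C_{G,1}$ from Lemma~\ref{control_G}, the resulting differential inequality integrates to the claim. The main obstacle is this last step: unlike in Section~\ref{Preuve_1}, where the nonlinearity only leaves a linear-in-$\rho_t$ remainder treated by Gronwall, here the empirical-average coupling term $\frac{L_W}{N}\sum_j|Z^j_t|$ weighted by $G^i_t$ is genuinely quadratic in $\tilde H$, and making it absorbable is precisely what dictates the use of the exponential-type Lyapunov function $\tilde H$ (so that its dissipation $H\exp(a\sqrt H)$ dominates the $\sqrt H\,\tilde H$ cross terms) and the insertion of the symmetric averages in \eqref{def_rho_part} (so that inter-particle cross terms can be reorganized by the Chebyshev sum inequality), at the cost of the several smallness conditions on $L_W$ in \eqref{cond_c_0}.
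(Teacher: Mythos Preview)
Your proposal is correct and follows essentially the same route as the paper: It\^o on $e^{ct}\rho_t$, the split of the interaction discrepancy into a Lipschitz coupling part and a fluctuation part $I^i_t$ handled by Cauchy--Schwarz and the conditional variance bound, and the absorption of the empirical coupling term $\frac{L_W}{N}\sum_j|Z^j_t|\,G^i_t$ into the $\tilde H$-dissipation, which is exactly the mechanism that forces the exponential Lyapunov function and the extra averages in $G^i_t$.

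Two minor remarks on the details. First, the paper groups the $L_W|Z^i_t|$ piece with $L_U|Z^i_t|$ in line \eqref{K_t_1} and kills it through the choice $\alpha>L_U+L_W$ and the construction of $f$, rather than through Lemma~\ref{rho_1_2}; only the averaged term $\frac{L_W}{N}\sum_j|Z^j_t|$ goes to line \eqref{K_t_4} and requires the $\tilde H$-absorption. Second, the paper does not invoke Chebyshev's sum inequality; it uses the pointwise elementary bounds $xe^{a\sqrt y}+ye^{a\sqrt x}\le xe^{a\sqrt x}+ye^{a\sqrt y}$ and $\sqrt{H_i}\sqrt{H_j}e^{a\sqrt{H_j}}\le H_ie^{a\sqrt{H_i}}+H_je^{a\sqrt{H_j}}$, which have the advantage of applying term-by-term without any ordering hypothesis and of working uniformly across the mixed nonlinear/particle cross terms weighted by $f(r^i_t)$. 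Your Chebyshev argument is valid when the two sequences are monotone functions of the same $H^i$, but the actual cross terms involve both $\bar H_i$ and $H^N_j$, for which similar ordering is not available; the paper's direct inequalities avoid this issue. None of this affects the overall correctness of your outline.
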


%
%

\subsubsection{Proof of Theorem~\ref{thm_2} using Theorem~\ref{thm_2_int}}

We first show how Theorem~\ref{thm_2} is a consequence of Theorem~\ref{thm_2_int}.
Let $\Gamma$ be a coupling of $\nu_0^{\otimes N}$ and $\bar{\nu}_0^{\otimes N}$, such that $\mathbb{E}\rho_0<\infty$. We consider the coupling previously introduced. 
For clarity, let us denote
\begin{align*}
\frac{A}{\sqrt{N}}=L_W\frac{\mathcal{C}^0\mathcal{C}_{G,2}^{1/2}}{\lambda}\sqrt{\frac{8}{N}},\ \ \ &B=(1+\alpha)\mathcal{C}_{G,1},
\end{align*}
i.e
\begin{align*}
e^{ct}\mathbb{E}\left(\rho_t\right)\leq&\mathbb{E}\left(\rho_0\right)+\xi 
B\int_{0}^te^{cs}ds+\frac{A}{\sqrt{N}}\int_{0}^te^{cs}ds.
\end{align*}
Let us consider
\begin{align*}
u(t)=e^{ct}&\left(\mathbb{E}\left(\rho_t\right)-\frac{A}{c}\frac{1}{\sqrt{N}}-\xi\frac{B}{c}\right)
\end{align*}
Then $u(t)\leq u(0)$ i.e
\begin{align*}
\mathbb{E}\left(\rho_t\right)\leq&\mathbb{E}\left(\rho_0\right)e^{-ct}+\frac{A}{c}\frac{1}{\sqrt{N}}\left(1-e^{-ct}\right)+\xi\frac{B}{c}\left(1-e^{-ct}\right).
\end{align*}
We thus obtain the desired result, by taking the limit as $\xi\rightarrow0$ uniformly in time, and by using the exchangeability of the particles to have $\mathbb{E}\left(\rho_t\right)=\mathbb{E}\left(\frac{1}{N}\sum_{i=1}^N\rho^i_t\right)=\mathbb{E}\left(\frac{1}{k}\sum_{i=1}^k\rho^i_t\right)$ for all $k\in\mathbb{N}$.

%
%

\subsubsection{Evolution of the coupling semimetric for the particle system}

We thus need to start by considering the dynamic of $\rho_t$. Like in Lemma~\ref{dyn_Q_t}, we have almost surely for all $t\geq 0$
 \begin{align*}
 d|Q^i_t|=-e^{i,T}_t&\left(\nabla U\left(\bar{X}^i_t\right)-\nabla U(X^{i,N}_t)\right)dt-e^{i,T}_t\left(\nabla W\ast\bar{\mu}_t\left(\bar{X}^i_t\right)-\nabla W\ast\bar{\mu}^N_t(X^{i,N}_t)\right)dt\\
 &+2\sqrt{2}rc\left(Z^i_t,W^i_t\right)e^{i,T}_tdB^{rc,i}_t.
  \end{align*}
 Hence $e^{ct}f\left(r^i_t\right)=f\left(r_0\right)+\hat{A}^i_t+\hat{M}^i_t\text{ with }$
\begin{align*}
d\hat{A}^i_t=&\Big(c f\left(r^i_t\right)+\alpha f'\left(r^i_t\right)\frac{d|Z^i_t|}{dt}- f'\left(r^i_t\right){e^i_t}^T\left(\nabla U\left(X^i_t\right)-\nabla U(X^{i,N}_t)\right)\\
&-f'\left(r^i_t\right){e^i_t}^T\left(\nabla W\ast\mu_t\left(X^i_t\right)-\frac{1}{N}\sum_{j=1}^N\nabla W(X^{i,N}_t-X^{j,N}_t)\right)+4 f''\left(r^i_t\right)rc^2\left(Z^i_t,W^i_t\right)\Big)e^{ct}dt,\\
d\hat{M}^i_t=&e^{ct}2\sqrt{2} f'\left(r^i_t\right)rc\left(Z^i_t,W^i_t\right){e^i_t}^TdB^{rc,i}_t.
\end{align*}
We now consider the evolution of
\begin{align*}
G^i_t=1+\epsilon \tilde{H}\left(\bar{X}^i_t,\bar{V}^i_t\right)+\epsilon  \tilde{H}(X^{i,N}_t,V^{i,N}_t)+\frac{\epsilon}{N}\sum_{j=1}^N \tilde{H}\left(\bar{X}^j_t,\bar{V}^j_t\right)+\frac{\epsilon}{N}\sum_{j=1}^N  \tilde{H}(X^{j,N}_t,V^{j,N}_t).
\end{align*}
Notice how we have added new terms in $G^i_t$. Those additional quantities will help us in dealing with the non linearity, as will be shown later.
\begin{align*}
dG^i_t&=\epsilon\left(\mathcal{L}_{\bar{\mu}_t^{\otimes N}}\tilde{H}\left(\bar{X}^i_t,\bar{V}^i_t\right)+\mathcal{L}^N\tilde{H}(X^{i,N}_t,V^{i,N}_t)\right)dt\\
&+\epsilon\sqrt{2}rc\left(Z^i_t,W^i_t\right)\left(\nabla_v\tilde{H}\left(\bar{X}^i_t,\bar{V}^i_t\right)-\nabla_v\tilde{H}(X^{i,N}_t,V^{i,N}_t)\right) \cdot e^i_t{e^i_t}^TdB^{rc,i}_t\\
&+\epsilon\sqrt{2}rc\left(Z^i_t,W^i_t\right)\left(\nabla_v\tilde{H}\left(\bar{X}^i_t,\bar{V}^i_t\right)+\nabla_v\tilde{H}(X^{i,N}_t,V^{i,N}_t)\right) \cdot \left(Id-e^i_t{e^i_t}^T\right)dB^{rc,i}_t\\
&+\epsilon\sqrt{2}sc\left(Z^i_t,W^i_t\right)\left(\nabla_v\tilde{H}\left(\bar{X}^i_t,\bar{V}^i_t\right)+\nabla_v \tilde{H}(X^{i,N}_t,V^{i,N}_t)\right) \cdot dB^{sc,i}_t\\
&+\frac{\epsilon}{N}\sum_{j=1}^N\left(\mathcal{L}_{\bar{\mu}_t^{\otimes N}}\tilde{H}\left(\bar{X}^j_t,\bar{V}^j_t\right)+\mathcal{L}^N\tilde{H}(X^{j,N}_t,V^{j,N}_t)\right)dt\\
&+\frac{\epsilon\sqrt{2}}{N}\sum_{j=1}^Nrc\left(Z^j_t,W^j_t\right)\left(\nabla_v\tilde{H}\left(\bar{X}^j_t,\bar{V}^j_t\right)-\nabla_v\tilde{H}(X^{j,N}_t,V^{j,N}_t)\right) \cdot e^j_t{e^j_t}^TdB^{rc,j}_t\\
&+\frac{\epsilon\sqrt{2}}{N}\sum_{j=1}^Nrc\left(Z^j_t,W^j_t\right)\left(\nabla_v\tilde{H}\left(\bar{X}^j_t,\bar{V}^j_t\right)+\nabla_v \tilde{H}(X^{j,N}_t,V^{j,N}_t)\right) \cdot \left(Id-e^j_t{e^j_t}^T\right)dB^{rc,j}_t\\
&+\frac{\epsilon\sqrt{2}}{N}\sum_{j=1}^Nsc\left(Z^j_t,W^j_t\right)\left(\nabla_v\tilde{H}\left(\bar{X}^j_t,\bar{V}^j_t\right)+\nabla_v\tilde{H}(X^{j,N}_t,V^{j,N}_t)\right) \cdot dB^{sc,j}_t.
\end{align*}
Therefore
\begin{equation}\label{dyn_rho_prop_1}
e^{ct}\rho^i_t=e^{ct}f\left(r^i_t\right)G^i_t=\rho_0+A^i_t+M^i_t,
\end{equation}
with
\begin{align*}
dA^i_t=&G^i_td\hat{A}^i_t+\epsilon e^{ct}f\left(r^i_t\right)\Big(\mathcal{L}_{\bar{\mu}_t^{\otimes N}}\tilde{H}\left(\bar{X}^i_t,\bar{V}^i_t\right)+\mathcal{L}^N\tilde{H}(X^{i,N}_t,V^{i,N}_t)\\
&+\frac{1}{N}\sum_{j=1}^N\mathcal{L}_{\bar{\mu}_t^{\otimes N}}\tilde{H}\left(\bar{X}^j_t,\bar{V}^j_t\right)+\frac{1}{N}\mathcal{L}^N\sum_{j=1}^N\tilde{H}(X^{j,N}_t,V^{j,N}_t)\Big)dt\\
&+4\epsilon\left(1+\frac{1}{N}\right) e^{ct}f'\left(r^i_t\right)rc^2\left(Z^i_t,W^i_t\right)\left(\nabla_v\tilde{H}\left(\bar{X}^i_t,\bar{V}^i_t\right)-\nabla_v\tilde{H}(X^{i,N}_t,V^{i,N}_t)\right) \cdot e^i_tdt
\end{align*}
and $M^i_t$ is a continuous local martingale. Let us deal with this last line. 
For the sake of conciseness, from now on we denote for all $i$
\begin{align*}
\bar{H}_i:=H\left(\bar{X}^i_t,\bar{V}^i_t\right), \ \ \text{ and }\ \ H^N_i:=H(X^{i,N}_t,V^{i,N}_t)
\end{align*}
We have
\begin{align*}
|\nabla_v&\tilde{H}\left(\bar{X}^i_t,\bar{V}^i_t\right)-\nabla_v \tilde{H}(X^{i,N}_t,V^{i,N}_t)|\\
=&\left|\nabla_v\bar{H}_i\exp\left(a\sqrt{\bar{H}_i}\right)-\nabla_vH^N_i\exp\left(a\sqrt{H^N_i}\right)\right|\\
\leq& \left|12\bar{X}^i_t+24\bar{V}^i_t-12X^{i,N}_t-24V^{i,N}_t\right|\left(\exp\left(a\sqrt{\bar{H}_i}\right)+\exp\left(a\sqrt{H^N_i}\right)\right)\\
&+a\left|12\bar{X}^i_t+24\bar{V}^i_t\right|\left|\sqrt{\bar{H}_i}-\sqrt{H^N_i}\right|\left(\exp\left(a\sqrt{\bar{H}_i}\right)+\exp\left(a\sqrt{H^N_i}\right)\right)\\
\leq&24\max\left(1,\frac{1}{2\alpha}\right)r^i_t\left(\exp\left(a\sqrt{\bar{H}_i}\right)+\exp\left(a\sqrt{H^N_i}\right)\right)\\
&+4a\sqrt{3}\left|\bar{H}_i-H^N_i\right|\left(\exp\left(a\sqrt{\bar{H}_i}\right)+\exp\left(a\sqrt{H^N_i}\right)\right).
\end{align*}
Now, using Lemma~\ref{lemma_dif_H}, we get
\begin{align*}
|\nabla_v&\tilde{H}\left(\bar{X}^i_t,\bar{V}^i_t\right)-\nabla_v \tilde{H}(X^{i,N}_t,V^{i,N}_t)|\\
\leq&\left(24\max\left(1,\frac{1}{2\alpha}\right)+4\sqrt{3}C_{dH,1}a\right)r^i_t\left(\exp\left(a\sqrt{\bar{H}_i}\right)+\exp\left(a\sqrt{H^N_i}\right)\right)\\
&+4\sqrt{3}C_{dH,2}ar^i_t\left(\sqrt{\bar{H}_i}+\sqrt{H^N_i}\right)\left(\exp\left(a\sqrt{\bar{H}_i}\right)+\exp\left(a\sqrt{H^N_i}\right)\right)\\
\leq&\left(24\max\left(1,\frac{1}{2\alpha}\right)+4\sqrt{3}C_{dH,1}a\right)r^i_t\left(\exp\left(a\sqrt{\bar{H}_i}\right)+\exp\left(a\sqrt{H^N_i}\right)\right)\\
&+8\sqrt{3}C_{dH,2}ar^i_t\left(\sqrt{\bar{H}_i}\exp\left(a\sqrt{\bar{H}_i}\right)+\sqrt{H^N_i}\exp\left(a\sqrt{H^N_i}\right)\right).
\end{align*}
Hence why, using \eqref{control_tilde_H} and \eqref{control_tilde_H_2}, we get
\begin{align*}
|\nabla_v&\tilde{H}\left(\bar{X}^i_t,\bar{V}^i_t\right)-\nabla_v \tilde{H}(X^{i,N}_t,V^{i,N}_t)|\\
\leq&\left(24\max\left(1,\frac{1}{2\alpha}\right)+4\sqrt{3}C_{dH,1}a\right)r^i_t\left(\frac{4}{a^2}\left(\exp\left(\frac{a^2}{2}\right)-1\right)+\tilde{H}\left(\bar{X}^i_t,\bar{V}^i_t\right)+\tilde{H}(X^{i,N}_t,V^{i,N}_t)\right)\\
&+8\sqrt{3}C_{dH,2}a^2r^i_t\left(\frac{2}{a^2}(e-2)+\tilde{H}\left(\bar{X}^i_t,\bar{V}^i_t\right)+\tilde{H}(X^{i,N}_t,V^{i,N}_t)\right)
\end{align*}
and thus
\begin{align*}
4\epsilon&\left(1+\frac{1}{N}\right) e^{ct}f'\left(r^i_t\right)rc^2\left(Z^i_t,W^i_t\right)\left(\nabla_v\tilde{H}\left(\bar{X}^i_t,\bar{V}^i_t\right)-\nabla_v\tilde{H}(X^{i,N}_t,V^{i,N}_t)\right) \cdot e^i_tdt\\
\leq&8\epsilon r^i_tf'\left(r^i_t\right)e^{ct}rc^2\left(Z^i_t,W^i_t\right)\left(\left(\frac{96}{a^2}\max\left(1,\frac{1}{2\alpha}\right)+\frac{16\sqrt{3}}{a}C_{dH,1}\right)\left(\exp\left(\frac{a^2}{2}\right)-1\right)+16\sqrt{3}(e-2)C_{dH,2}\right)\\
&+8 r^i_tf'\left(r^i_t\right)e^{ct}rc^2\left(Z^i_t,W^i_t\right)\left(24\max\left(1,\frac{1}{2\alpha}\right)+4\sqrt{3}C_{dH,1}a+8\sqrt{3}C_{dH,2}a^2\right)\left(\epsilon\tilde{H}\left(\bar{X}^i_t,\bar{V}^i_t\right)+\epsilon\tilde{H}(X^{i,N}_t,V^{i,N}_t)\right)\\
\leq& \left(\epsilon C_{f,1}+C_{f,2}\right)r^i_tf'\left(r^i_t\right)rc^2\left(Z^i_t,W^i_t\right)G^i_t.
\end{align*}
Then we use
\begin{align*}
&\left|\nabla W\ast\bar{\mu}_t\left(\bar{X}^i_t\right)-\frac{1}{N}\sum_{j=1}^N\nabla W(X^{i,N}_t-X^{j,N}_t)\right|\\
&\leq\left|\nabla W\ast\bar{\mu}_t\left(\bar{X}^i_t\right)-\frac{1}{N}\sum_{j=1}^N\nabla W\left(\bar{X}^i_t-\bar{X}^j_t\right)\right|+\left|\frac{1}{N}\sum_{j=1}^N\nabla W\left(\bar{X}^i_t-\bar{X}^j_t\right)-\frac{1}{N}\sum_{j=1}^N\nabla W(X^{i,N}_t-X^{j,N}_t)\right|,\\
&\leq\left|\nabla W\ast\bar{\mu}_t\left(\bar{X}^i_t\right)-\frac{1}{N}\sum_{j=1}^N\nabla W\left(\bar{X}^i_t-\bar{X}^j_t\right)\right|+\frac{1}{N}\sum_{j=1}^N\left|\nabla W\left(\bar{X}^i_t-\bar{X}^j_t\right)-\nabla W(X^{i,N}_t-X^{j,N}_t)\right|,\\
&\leq\left|\nabla W\ast\bar{\mu}_t\left(\bar{X}^i_t\right)-\frac{1}{N}\sum_{j=1}^N\nabla W\left(\bar{X}^i_t-\bar{X}^j_t\right)\right|+\frac{L_W}{N}\sum_{j=1}^N\left(\left|\bar{X}^i_t-X^{i,N}_t\right|+\left|\bar{X}^j_t-X^{j,N}_t\right|\right).
\end{align*} 
Thus
\begin{align*}
&\left|\nabla W\ast\bar{\mu}_t\left(\bar{X}^i_t\right)-\frac{1}{N}\sum_{j=1}^N\nabla W(X^{i,N}_t-X^{j,N}_t)\right|\\
&\hspace{3cm}\leq\left|\nabla W\ast\bar{\mu}_t\left(\bar{X}^i_t\right)-\frac{1}{N}\sum_{j=1}^N\nabla W\left(\bar{X}^i_t-\bar{X}^j_t\right)\right|+ L_W|Z^i_t|+L_W\frac{\sum_{j=1}^N|Z^j_t|}{N}.
\end{align*} 
And finally we use \eqref{dyn_tilde_H_non_lin}, \eqref{dyn_part_tilde_H} and \eqref{gen_part_sum} to have
\begin{align*}
\mathcal{L}_{\bar{\mu}_t^{\otimes N}}\tilde{H}&\left(\bar{X}^i_t,\bar{V}^i_t\right)+\mathcal{L}^N\tilde{H}(X^{i,N}_t,V^{i,N}_t)+\frac{1}{N}\sum_{j=1}^N\mathcal{L}_{\bar{\mu}_t^{\otimes N}}\tilde{H}\left(\bar{X}^j_t,\bar{V}^j_t\right)+\frac{1}{N}\mathcal{L}^N\sum_{j=1}^N\tilde{H}(X^{j,N}_t,V^{j,N}_t)\\
\leq&4\tilde{B}+L_W\left(6+8\lambda\right)\left(\frac{\sum_{j=1}^N|X^{j,N}_t|}{N}\right)^2\exp\left(a\sqrt{H^N_i}\right)-\frac{\gamma }{4}\bar{H}_i\exp\left(a\sqrt{\bar{H}_i}\right)-\frac{\gamma }{4}  H^N_i\exp\left(a\sqrt{H^N_i}\right)\\
&-\frac{\gamma }{4N}\sum_{j=1}^N\left(\bar{H}_j\exp\left(a\sqrt{\bar{H}_j}\right)+H^N_j\exp\left(a\sqrt{H^N_j}\right)\right).
\end{align*} 
We thus obtain
\begin{equation}\label{dyn_rho_prop_2}
dA^i_t\leq e^{ct} K^i_tdt
\end{equation} 
with
\begin{align}
K^i_t=&f'\left(r^i_t\right)G^i_t\left(\alpha\frac{d|Z^i_t|}{dt}+\left(L_U+L_W\right)|Z^i_t|+\left(\epsilon C_{f,1}+C_{f,2}\right)r^i_trc^2\left(Z^i_t,W^i_t\right)\right)+2cf\left(r^i_t\right)G^i_t \label{K_t_1}\\
 &+4f''\left(r^i_t\right)G^i_trc^2\left(Z^i_t,W^i_t\right)+\left|\nabla W\ast\bar{\mu}_t\left(\bar{X}^i_t\right)-\frac{1}{N}\sum_{j=1}^N\nabla W\left(\bar{X}^i_t-\bar{X}^j_t\right)\right|f'\left(r^i_t\right)G^i_t\label{K_t_2}\\
 &+\epsilon f\left(r^i_t\right)\left(4\tilde{B}-\frac{\gamma}{16}  \tilde{H}\left(\bar{X}^i_t,\bar{V}^i_t\right)-\frac{\gamma}{16}  \tilde{H}(X^{i,N}_t,V^{i,N}_t) -\frac{\gamma }{16N}\sum_{j=1}^N\tilde{H}\left(\bar{X}^j_t,\bar{V}^j_t\right)\right.\nonumber\\&\left.\qquad\qquad\qquad-\frac{\gamma }{16N}\sum_{j=1}^N\tilde{H}(X^{j,N}_t,V^{j,N}_t)\right)\label{K_t_5}\\
&+L_W\frac{\sum_{j=1}^N|Z^j_t|}{N}f'\left(r^i_t\right)G^i_t-cf\left(r^i_t\right)G^i_t-\epsilon f\left(r^i_t\right)\Big(\frac{\gamma}{16}  \bar{H}_i\exp\left(a\sqrt{\bar{H}_i}\right)+\frac{\gamma}{16}  H^N_i\exp\left(a\sqrt{H^N_i}\right)\nonumber\\
&\hspace{2cm}+ \frac{\gamma }{16N}\sum_{j=1}^N\bar{H}_j\exp\left(a\sqrt{\bar{H}_j}\right)+\frac{\gamma }{16N}\sum_{j=1}^NH^N_j\exp\left(a\sqrt{H^N_j}\right)\Big)\label{K_t_4}\\\
&+\epsilon L_W\left(6+8\lambda\right)f\left(r^i_t\right)\left(\frac{\sum_{j=1}^N|X^{j,N}_t|}{N}\right)^2\exp\left(a\sqrt{H^N_i}\right)\nonumber\\
&\hspace{2cm}- \frac{\gamma\epsilon }{8}f\left(r^i_t\right)\left(H^N_i\exp\left(a\sqrt{H^N_i}\right)+\frac{1}{N}\sum_{j=1}^NH^N_j\exp\left(a\sqrt{H^N_j}\right)\right)\label{K_t_8}.
\end{align}
This formulation of $K^i_t$ might seem cumbersome (and to some degree it is...) but we have actually grouped the various terms based on how we will have them compensate one another. Thus, 
\begin{itemize}
\item lines \eqref{K_t_1} and \eqref{K_t_2} will be managed thanks to the 
construction of the function $f$ like before, with a special care given to the last term of line \eqref{K_t_2}, on which we will use a law of large number,
\item line \eqref{K_t_5} will come into play when considering the "last region of space" introduced previously,
\item line \eqref{K_t_4} will, under some conditions on $L_W$, be nonpositive when summing up all $\left(K^j_t\right)_j$,
\item and finally, line \eqref{K_t_8} will be nonpositive thanks to Lemma~\ref{Lya}, provided $L_W$ is sufficiently small. 
\end{itemize}
This shows an important idea in the construction of the function $\rho$ : we added in $G^i_t$ the empirical mean of $H(X^{i,N}_t,V^{i,N}_t)+H\left(\bar{X}^i_t,\bar{V}^i_t\right)$. This will allow us to tackle the non linearity appearing through $\frac{\sum_{j=1}^N|Z^j_t|}{N}$.

%
%

\subsubsection{Some calculations}

Like previously, we now have to show contraction in all three regions of space. Recall $f'\left(r^i_t\right)\leq1$. The same calculations as before will be used, we only detail here the differences.

\begin{itemize}
\item First, since $\frac{L_W}{\lambda}\left(6+8\lambda\right)\leq\frac{\gamma}{8} $, by using Lemma~\ref{Lya} and since
\begin{align*}
H^N_j\exp\left(a\sqrt{H^N_i}\right)\leq H^N_i\exp\left(a\sqrt{H^N_i}\right)+H^N_j\exp\left(a\sqrt{H^N_j}\right)
\end{align*}
we obtain
 \begin{align*}
\epsilon L_W&\left(6+8\lambda\right)f\left(r^i_t\right)\left(\frac{\sum_{j=1}^N|X^{j,N}_t|}{N}\right)^2\exp\left(a\sqrt{H^N_i}\right)\\
&\hspace{2cm}- \frac{\gamma\epsilon }{8N}f\left(r^i_t\right)\left(NH^N_i\exp\left(a\sqrt{H^N_i}\right)+\sum_{j=1}^NH^N_j\exp\left(a\sqrt{H^N_j}\right)\right)\leq0.
\end{align*} 
This takes care of \eqref{K_t_8}.

\item We have, since $f'\left(r^i_t\right)\leq1$: 
$\frac{1}{N}\sum_{i=1}^N\frac{\sum_{j=1}^N|Z^j_t|}{N}f'\left(r^i_t\right)G^i_t\leq\frac{\sum_{i,j=1}^N|Z^j_t|G^i_t}{N^2}.
$\\ Then, using Lemma~\ref{rho_1_2}
\begin{align*}
\frac{1}{N^2}\sum_{i,j=1}^N|Z^i_t|G^j_t=&\frac{1}{N}\sum_{i=1}^N|Z^i_t|+\frac{2\epsilon}{N^2} \sum_{i,j=1}^N |Z^i_t|\tilde{H}\left(\bar{X}^j_t,\bar{V}^j_t\right)+\frac{2\epsilon}{N^2} \sum_{i,j=1}^N|Z^i_t| \tilde{H}(X^{j,N}_t,V^{j,N}_t)\\
\leq&\frac{\mathcal{C}_1}{N}\sum_{i=1}^N\rho^i_t+\frac{2\mathcal{C}_z\epsilon}{N^2} \sum_{i,j=1}^N f(r^i_t)\left(\tilde{H}\left(\bar{X}^j_t,\bar{V}^j_t\right)+ \tilde{H}(X^{j,N}_t,V^{j,N}_t)\right)\\
&+\frac{2\mathcal{C}_z\epsilon^2}{N^2}\frac{2}{a} \sum_{i,j=1}^N f(r^i_t)\left(\sqrt{\bar{H}_i}+\sqrt{H^N_i}\right)\left(\sqrt{\bar{H}_j}\exp\left(a\sqrt{\bar{H}_j}\right)+\sqrt{H^N_j}\exp\left(a\sqrt{H^N_j}\right)\right).
\end{align*}
First, using \eqref{control_tilde_H}
\begin{align*}
\frac{2\mathcal{C}_z\epsilon}{N^2} \sum_{i,j=1}^N f(r^i_t)\left(\tilde{H}\left(\bar{X}^j_t,\bar{V}^j_t\right)+ \tilde{H}(X^{j,N}_t,V^{j,N}_t)\right)\leq \frac{2\mathcal{C}_z\epsilon}{N^2} \sum_{i,j=1}^N f(r^i_t)\left(\bar{H}_j\exp\left(a\sqrt{\bar{H}_j}\right)+H^N_j\exp\left(a\sqrt{H^N_j}\right)\right).
\end{align*}
Since
\begin{align*}
\left(\sqrt{\bar{H}_i}+\sqrt{H^N_i}\right)&\left(\sqrt{\bar{H}_j}\exp\left(a\sqrt{\bar{H}_j}\right)+\sqrt{H^N_j}\exp\left(a\sqrt{H^N_j}\right)\right)\\
\leq&2\left(\bar{H}_i\exp\left(a\sqrt{\bar{H}_i}\right)+\bar{H}_j\exp\left(a\sqrt{\bar{H}_j}\right)+H^N_i\exp\left(a\sqrt{H^N_i}\right)+H^N_j\exp\left(a\sqrt{H^N_j}\right)\right),
\end{align*}
we have
\begin{align*}
\frac{4\mathcal{C}_z\epsilon^2}{aN^2} \sum_{i,j=1}^N& f(r^i_t)\left(\sqrt{\bar{H}_i}+\sqrt{H^N_i}\right)\left(\sqrt{\bar{H}_j}\exp\left(a\sqrt{\bar{H}_j}\right)+\sqrt{H^N_j}\exp\left(a\sqrt{H^N_j}\right)\right)\\
\leq&\frac{8\mathcal{C}_z\epsilon^2}{aN^2} \sum_{i,j=1}^N f(r^i_t)\left(\bar{H}_i\exp\left(a\sqrt{\bar{H}_i}\right)+\bar{H}_j\exp\left(a\sqrt{\bar{H}_j}\right)+H^N_i\exp\left(a\sqrt{H^N_i}\right)+H^N_j\exp\left(a\sqrt{H^N_j}\right)\right).
\end{align*}
This way, since $2\mathcal{C}_zL_W\leq\frac{\gamma}{32}$, $L_W\epsilon\frac{8\mathcal{C}_z}{a}\leq\frac{\gamma}{32}$,  and $L_W\mathcal{C}_1\leq c$, we get
\begin{align*}
&\frac{1}{N}\sum_{i=1}^N\left(L_W\frac{\sum_{j=1}^N|Z^j_t|}{N}f'\left(r^i_t\right)G^i_t-cf\left(r^i_t\right)G^i_t-\epsilon f\left(r^i_t\right)\left(\frac{\gamma}{16}  \bar{H}_i\exp\left(a\sqrt{\bar{H}_i}\right)+\frac{\gamma}{16}  H^N_i\exp\left(a\sqrt{H^N_i}\right)\right.\right.\\
&\left.\left.\hspace{2cm}+ \frac{\gamma }{16N}\sum_{j=1}^N\bar{H}_j\exp\left(a\sqrt{\bar{H}_j}\right)+\frac{\gamma }{16N}\sum_{j=1}^NH^N_j\exp\left(a\sqrt{H^N_j}\right)\right)\right)\leq0
\end{align*}

\item Using Cauchy-Schwarz inequality
\begin{align*}
\mathbb{E}&\left(G^i_t\left|\nabla W\ast\bar{\mu}_t\left(\bar{X}^i_t\right)-\frac{1}{N}\sum_{j=1}^N\nabla W\left(\bar{X}^i_t-\bar{X}^j_t\right)\right|\right)\\
&\leq\mathbb{E}\left({G^i_t}^2\right)^{1/2}\mathbb{E}\left(\left|\nabla W\ast\bar{\mu}_t\left(\bar{X}^i_t\right)-\frac{1}{N}\sum_{j=1}^N\nabla W\left(\bar{X}^i_t-\bar{X}^j_t\right)\right|^2\right)^{1/2},\\
&\leq\mathbb{E}\left({G^i_t}^2\right)^{1/2}\mathbb{E}\left(\mathbb{E}\left(\left|\nabla W\ast\bar{\mu}_t\left(\bar{X}^i_t\right)-\frac{1}{N}\sum_{j=1}^N\nabla W\left(\bar{X}^i_t-\bar{X}^j_t\right)\right|^2\Big|\bar{X}^i_t\right)\right)^{1/2}.
\end{align*} 
Moreover, we notice that given $\bar{X}^i_t$, the random variables $\bar{X}^j_t$ for $j\neq i$ are i.i.d with law $\bar{\mu}_t$. Hence
\begin{align*}
\mathbb{E}\left(\left|\nabla W\ast\bar{\mu}_t\left(\bar{X}^i_t\right)-\frac{1}{N-1}\sum_{j=1}^N\nabla W\left(\bar{X}^i_t-\bar{X}^j_t\right)\right|^2\Big|\bar{X}^i_t\right)&=\frac{1}{N-1}\text{Var}_{\bar{\mu}_t}\left(\nabla W\left(\bar{X}^i_t-\cdot\right)\right),\\
&\leq\frac{2L_W^2}{N-1}\mathbb{E}_{\bar{\mu}_t}\left(|\cdot|^2\right),
\end{align*} 
so
\begin{align*}
&\mathbb{E}\Bigg(\left|\nabla W\ast\bar{\mu}_t\left(\bar{X}^i_t\right)-\frac{1}{N}\sum_{j=1}^N\nabla W\left(\bar{X}^i_t-\bar{X}^j_t\right)\right|^2\Bigg)\\
&\hspace{3cm}\leq\mathbb{E}\left(\left|\nabla W\ast\bar{\mu}_t\left(\bar{X}^i_t\right)-\frac{1}{N-1}\sum_{j=1}^N\nabla W\left(\bar{X}^i_t-\bar{X}^j_t\right)\right|^2\right)\\
&\hspace{3.2cm}+\mathbb{E}\left(\left|\frac{1}{N}\sum_{j=1}^N\nabla W\left(\bar{X}^i_t-\bar{X}^j_t\right)-\frac{1}{N-1}\sum_{j=1}^N\nabla W\left(\bar{X}^i_t-\bar{X}^j_t\right)\right|^2\right),\\
&\hspace{3cm}\leq\frac{2L_W^2}{N-1}\mathbb{E}_{\bar{\mu}_t}\left(|\cdot|^2\right)+\left(\frac{1}{N-1}-\frac{1}{N}\right)^2N\sum_{j=1}^NL_W^2\mathbb{E}\left(|\bar{X}^i_t-\bar{X}^j_t|^2\right),\\
&\hspace{3cm}\leq 2L_W^2\left(\frac{1}{N-1}+\frac{1}{(N-1)^2}\right)\mathbb{E}_{\bar{\mu}_t}\left(|\cdot|^2\right).
\end{align*} 
We may then use $\mathbb{E}_{\bar{\mu}_t}\left(|\cdot|^2\right)\leq\frac{\mathcal{C}^0}{\lambda}$.
\end{itemize}
Thus, by the same exact construction as before,  we can obtain the existence of a function $f$ and a constant $c>0$ such that in all regions of space, for $L_W$ sufficiently small, 
\begin{align*}
\mathbb{E}\left(\frac{1}{N}\sum_iK^i_t\right)\leq& \xi\left(1+\alpha\right)\mathcal{C}_{G,1}+L_W\frac{\mathcal{C}^0\mathcal{C}_{G,2}^{1/2}}{\lambda}\left(\frac{2}{N-1}+\frac{2}{(N-1)^2}\right)^{1/2}.
\end{align*} 
By taking the expectation in the dynamic of $\rho_t$ given by \eqref{dyn_rho_prop_1} and\eqref{dyn_rho_prop_2} at stopping times $\tau_n$ increasingly converging to $t$ , we prove Theorem~\ref{thm_2_int} by using Fatou's lemma for $n\rightarrow\infty$.

%
%

\appendix

%
%

\section{Various results}

%
%

\subsection{Proof of lemma \ref{minU}}\label{preuve_min_U}

The property only depends on the distance to the the origin, not the direction. We therefore only need to prove it in dimension 1, making sure the 
constant $\tilde{A}$ is independent of the direction. There is $x_0>0$ such that $\frac{\lambda}{2}x_0^2=2A$.
Therefore, for $x\geq0$, using \eqref{HypU-lya}:
\begin{align*}
U'\left(x_0+x\right)\left(x_0+x\right)\geq 2\lambda U\left(x_0+x\right)+\frac{\lambda}{2}\left(x_0+x\right)^2-2A=2\lambda U\left(x_0+x\right)+\frac{\lambda}{2}x^2+\lambda x x_0.
\end{align*}
Then, for $x\geq0$:
\begin{align*}
U\left(x_0+x\right)-U\left(x_0\right)&=\int_0^1U'\left(x_0+tx\right)xdt=\int_0^1U'\left(x_0+tx\right)\left(x_0+tx\right)\frac{x}{x_0+tx}dt\\
&\geq\frac{x}{x_0+x}\int_0^1 2\lambda U\left(x_0+tx\right)+\frac{\lambda}{2}t^2x^2+\lambda t x x_0dt\\
&\geq\frac{x}{x_0+x}\left(\frac{\lambda}{6}x^2+\frac{\lambda}{2} x x_0\right)\hspace{0.5cm}\text{since }U\geq0\\
&=\frac{\lambda}{6}\frac{x^3}{x_0+x}+\frac{\lambda}{2}\frac{x^2x_0}{x_0+x}.
\end{align*}
We thus have for all $x\geq x_0$:
\begin{align*}
U\left(x\right)-U\left(x_0\right)&\geq\frac{\lambda}{6}\frac{\left(x-x_0\right)^3}{x}+\frac{\lambda}{2}\frac{\left(x-x_0\right)^2x_0}{x}\\
&=\frac{\lambda}{6}x^2-\frac{\lambda}{2}xx_0+\frac{\lambda}{2}x_0^2-\frac{\lambda}{6}\frac{x_0^3}{x}+\frac{\lambda}{2}xx_0-\lambda x_0^2+\frac{\lambda}{2}\frac{x_0^3}{x}\\
&=\frac{\lambda}{6}x^2-\frac{\lambda}{2}x_0^2+\frac{\lambda}{3}\frac{x_0^3}{x}.
\end{align*}
However, $-\frac{\lambda}{2}x_0^2+\frac{\lambda}{3}\frac{x_0^3}{x}\geq-\frac{\lambda}{2}x_0^2=-2A$ for $x\geq x_0$. We therefore have the desired result for $x\geq x_0$. The same reasoning gives us the result for $x\leq -x_0$. 

Hence, if $|x|\geq|x_0|=\sqrt{\frac{4A}{\lambda}}$, $U\left(x\right)-U\left(x_0\right)\geq \frac{\lambda}{6}x^2-2A$. We then use the fact that $U\left(x\right)$ is continuous on the sphere of center 0 and radius $\sqrt{\frac{4A}{\lambda}}$, hence bounded on this set, to give a lower bound on $U\left(x_0\right)$ independent of the direction. Finally, for $|x|\in[-x_0,x_0]$, the function $x\mapsto U\left(x\right)-\frac{\lambda}{6}x^2$ 
is continuous, therefore bounded. 

\subsection{Proof of lemma \ref{majNablaW}}\label{preuve_majNablaW}

We have
\begin{align*}
\nabla W\ast\mu\left(x\right)-\nabla W\ast\nu\left(\tilde{x}\right)=\nabla W\ast\mu\left(x\right)-\nabla W\ast\mu\left(\tilde{x}\right)+\nabla W\ast\mu\left(\tilde{x}\right)-\nabla W\ast\nu\left(\tilde{x}\right)
\end{align*}
Let $(X,\tilde{X})$ be a coupling of $\mu$ and $\nu$. Then
\begin{align*}
|\nabla W\ast\mu_t\left(x\right)-\nabla W\ast\tilde{\mu}_t\left(\tilde{x}\right)|=&\left|\mathbb{E}\left(\nabla W(x-X)-\nabla W(\tilde{x}-\tilde{X})\right)\right|\\
\leq&L_W\mathbb{E}\left(|x-X-\tilde{x}+\tilde{X}|\right)\\
\leq&L_W\mathbb{E}\left(|x-\tilde{x}|+|X-\tilde{X}|\right)\\
\end{align*}
This being true for all coupling, we obtain the desired result.

%
%

\subsection{Proof of Lemma~\ref{Lya}}\label{preuve_lya}

\begin{remark}
With $\gamma$ given by \eqref{eq:def_gamma_B}, we have $\gamma \leq\frac{1}{2}$.
\end{remark}
We have
\begin{align*}
\mathcal{L}_\mu H\left(x,v\right)&=v\cdot\nabla_xH\left(x,v\right)-v\cdot\nabla_vH\left(x,v\right)-\nabla U\left(x\right)\cdot\nabla_vH\left(x,v\right)-\nabla W\ast\mu\left(x\right)\cdot\nabla_vH\left(x,v\right)\\
&\hspace{10cm}+\Delta_vH\left(x,v\right)\\
&=v\cdot\left(24\nabla U\left(x\right)+12\left(1-\gamma \right)x+2\lambda x+12v\right)-v\cdot\left(12x+24v\right)\\
&\hspace{1cm}-\nabla U\left(x\right)\cdot\left(12x+24v\right)-\nabla W\ast\mu\left(x\right)\cdot\left(12x+24v\right)+24d\\
&=24d-12\nabla U\left(x\right)\cdot x+x\cdot v\left(12\left(1-\gamma \right)+2\lambda-12\right)-\nabla W\ast\mu\left(x\right)\cdot\left(12x+24v\right)-12|v|^2,
\end{align*}
with
\begin{align*}
-\gamma H\left(x,v\right)=&-24\gamma U\left(x\right)-6\gamma \left(1-\gamma \right)|x|^2-\gamma\lambda |x|^2-12\gamma  x\cdot v-12\gamma |v|^2\\
-12\nabla U\left(x\right)\cdot x\leq&-24\lambda U\left(x\right)-6\lambda|x|^2+24A\\
-\nabla W\ast\mu\left(x\right)\cdot\left(12x+24v\right)\leq& \left(L_W|x|+L_W\mathbb{E}_{\mu}\left(|\cdot|\right)\right)\left(12|x|+24|v|\right)\\
\leq& 12L_W|x|^2+24L_W|x||v|+L_W\mathbb{E}_{\mu}\left(|\cdot|\right)\Big(6\frac{|x|^2}{a_x\mathbb{E}_{\mu}\left(|\cdot|\right)}+6a_x\mathbb{E}_{\mu}\left(|\cdot|\right)\\
&+12\frac{|v|^2}{a_v\mathbb{E}_{\mu}\left(|\cdot|\right)}+12a_v\mathbb{E}_{\mu}\left(|\cdot|\right)\Big),
\end{align*}
where this last inequality holds for any $a_x, a_v>0$. Therefore
\begin{align*}
\mathcal{L}_ \mu H\left(x,v\right)\leq& 24A+24d+6L_W\mathbb{E}_{\mu}\left(|\cdot|\right)^2\left(a_x+2a_v\right)-\gamma  H\left(x,v\right)+24\gamma 
 U\left(x\right)+6\gamma \left(1-\gamma \right)|x|^2\\
&+\gamma\lambda |x|^2+12\gamma  x\cdot v+12\gamma |v|^2-24\lambda U\left(x\right)-6\lambda|x|^2+12L_W|x|^2+24L_W|x||v|\\
&+\frac{6L_W}{a_x}|x|^2+\frac{12L_W}{a_v}|v|^2+x\cdot v\left(12\left(1-\gamma \right)+2\lambda-12\right)-12|v|^2,
\end{align*}
and then
\begin{align*}
\mathcal{L}_\mu H\left(x,v\right)\leq& 24A+24d+6L_W\mathbb{E}_{\mu}\left(|\cdot|\right)^2\left(a_x+2a_v\right)-\gamma  H\left(x,v\right)+24 U\left(x\right)\left(\gamma-\lambda \right)\\
&+|x||v|\left(|12\gamma +12\left(1-\gamma \right)+2\lambda-12|+24L_W\right)\\
&+|x|^2\left(6\gamma \left(1-\gamma \right)+\gamma\lambda -6\lambda+12L_W+\frac{6L_W}{a_x}\right)\\
&+|v|^2\left(12\gamma -12+\frac{12L_W}{a_v}\right).
\end{align*}
We now use $|x||v|\leq \frac{\lambda}{3}|x|^2+\frac{3}{4\lambda}|v|^2$, and $|12\gamma\lambda+12\left(1-\gamma\lambda\right)+2\lambda-12|=2\lambda$.

We have $\left(\gamma-\lambda \right)<0$. Hence $24 U\left(x\right)\left(\gamma-\lambda \right)\leq4\lambda\left(\gamma-\lambda \right)|x|^2-24\left(\gamma-\lambda\right)\tilde{A}$ using Lemma~\ref{minU}. Then
\begin{align*}
\mathcal{L}_\mu H\left(x,v\right)&\leq 24A-24 \left(\gamma-\lambda\right)\tilde{A}+24d+6L_W\mathbb{E}_{\mu}\left(|\cdot|\right)^2\left(a_x+2a_v\right)-\gamma  H\left(x,v\right)\\
&+|x|^2\left(4\lambda \left(\gamma-\lambda\right)+6\gamma \left(1-\gamma \right)+\gamma\lambda -6\lambda+\frac{6L_W}{a_x}+12L_W+\frac{2\lambda^2}{3}+\frac{24L_W\lambda}{3}\right)\\
&+|v|^2\left(12\gamma -12+\frac{12L_W}{a_v}+\frac{3}{2}+\frac{3}{4\lambda}24L_W\right).
\end{align*}
We now consider each term individually.
\begin{description}
\item[Coefficient of $|x|^2$.] We have, using $0<\gamma <1$ and $L_W\leq\frac{\lambda}{8}$
\begin{align*}
4\lambda \left(\gamma-\lambda\right)+&6\gamma \left(1-\gamma \right)+\gamma\lambda -6\lambda+\frac{2\lambda^2}{3}+\frac{24L_W\lambda}{3}+12L_W\\
&\leq \gamma\left(5\lambda +6 \right)-\left(4\lambda^2+6\lambda-\frac{2\lambda^2}{3}-\lambda^2-\frac{3\lambda}{2}\right).
\end{align*}
Therefore, it is sufficient that
\begin{align*}
\gamma\leq\lambda\frac{\frac{7}{3}\lambda +\frac{9}{2} }{5\lambda+6}.
\end{align*}
We check this holds for $\gamma=\frac{\lambda}{2\lambda+2}$. Then
\begin{align*}
4\lambda \left(\gamma-\lambda\right)+&6\gamma \left(1-\gamma \right)+\gamma\lambda -6\lambda+\frac{2\lambda^2}{3}+\frac{24L_W\lambda}{3}+12L_W+\frac{6L_W}{a_x}\\
&\leq\left(5\lambda +6 \right)\left(\gamma-\frac{\frac{7}{3}\lambda^2+\frac{9}{2}\lambda}{5\lambda+6}\right)+\frac{3\lambda}{4a_x}.
\end{align*}
We therefore choose 
\begin{align*}
\frac{3\lambda}{4a_x}\leq-\left(5\lambda +6 \right)\left(\gamma-\frac{\frac{7}{3}\lambda+\frac{9}{2}\lambda}{5\lambda+6}\right)=\frac{7}{3}\lambda^2+\frac{9}{2}\lambda-\frac{5\lambda^2+6\lambda}{2\lambda+2}.
\end{align*}
It is, for that, sufficient to take
\begin{align*}
\frac{3\lambda}{4a_x}=\frac{3}{4}\lambda,\hspace{0.3cm}\text{i.e } \hspace{0.3cm}a_x=1.
\end{align*}
Furthermore
\begin{align*}
6\lambda \left(\gamma-\lambda\right)&+6\gamma \left(1-\gamma \right)+\gamma\lambda -6\lambda+\frac{2\lambda^2}{3}+\frac{24L_W\lambda}{3}+12L_W+\frac{6L_W}{a_x}\\
&\leq\frac{5\lambda^2+6\lambda}{2\lambda+2}-\frac{7}{3}\lambda^2-\frac{9}{2}\lambda+\frac{3}{4}\lambda=\frac{5\lambda^2+6\lambda}{2\lambda+2}-\frac{7}{3}\lambda^2-\frac{15}{4}\lambda.
\end{align*}
We then observe
\begin{align*}
6\lambda \left(\gamma-\lambda\right)+6\gamma \left(1-\gamma \right)+\gamma\lambda -6\lambda+\frac{6L_W}{a_x}+\frac{2\lambda^2}{3}+\frac{24L_W\lambda}{3}+12L_W\leq -\lambda^2-\frac{3}{4}\lambda.
\end{align*}
And finally
\begin{align*}
\forall \lambda >0,\  \forall x,\hspace{0.5cm}|x|^2\left(6\lambda \left(\gamma-\lambda\right)+6\gamma \left(1-\gamma \right)+\gamma\lambda -6\lambda+\frac{6L_W}{a_x}+\frac{2\lambda^2}{3}+\frac{48L_W\lambda}{3}+12L_W\right)\\
\leq-\lambda^2|x|^2-\frac{3}{4}\lambda|x|^2
\end{align*}

\item[Coefficient of $|v|^2$.] We have, using $0<\gamma \leq\frac{1}{2}$ and $L_W\leq\lambda/8$
\begin{align*}
12\gamma -12+\frac{3}{2}+\frac{3}{4\lambda}24L_W\leq-6+\frac{3}{2}+\frac{18}{\lambda}\cdot\frac{\lambda}{8}=-6+\frac{3}{2}+\frac{9}{4}=-\frac{9}{4}.
\end{align*}
We then choose
\begin{align*}
\frac{12\lambda}{8a_v}=\frac{9}{4},\hspace{0.3cm}\text{ i.e } \hspace{0.3cm}a_v=\frac{2}{3}\lambda.
\end{align*}
Therefore
\begin{align*}
\forall \lambda >0,\  \forall v,\hspace{0.5cm}|v|^2\left(12\gamma -12+\frac{3}{2}+\frac{3}{4\lambda}24L_W+\frac{12L_W}{a_v}\right)\leq0.
\end{align*}
\end{description}
We thus obtain
\begin{align*}
\mathcal{L}_\mu H\left(x,v\right)\leq 24\left(A- \left(\gamma-\lambda\right)\tilde{A}+d\right)+6L_W\mathbb{E}_{\mu}\left(|\cdot|\right)^2\left(1+\frac{4}{3}\lambda\right)-\lambda^2|x|^2-\frac{3}{4}\lambda|x|^2-\gamma  H\left(x,v\right),
\end{align*}
i.e
\begin{equation}\label{majTemp}
\mathcal{L}_\mu H\left(x,v\right)\leq 24\left(A- \left(\gamma-\lambda\right)\tilde{A}+d\right)+\mathbb{E}_{\mu}\left(|\cdot|\right)^2\left(\frac{3}{4}\lambda+\lambda^2\right)-\lambda^2|x|^2-\frac{3}{4}\lambda|x|^2-\gamma  H\left(x,v\right).
\end{equation}

\subsection{Proof of Lemma~\ref{Prop-H}}\label{preuve_prop_de_H}

Using $1-\gamma \geq\frac{1}{2}$, we get
\begin{align*}
H\left(x,v\right)&\geq24U(x)+\left(3+\lambda\right)|x|^2+12\left|v+\frac{x}{2}\right|^2-3|x|^2,
\end{align*}
which is \eqref{maj_par_H}. We then have
\begin{align*}
H\left(x,v\right)\geq\min\left(\frac{2}{3}\lambda,6\right)\left(|v|^2+|x+v|^2\right).
\end{align*}
Thus
\begin{align*}
r(x,\tilde{x},v,\tilde{v})^2&\leq\left(\left(1+\alpha\right)|x-\tilde{x}+v-\tilde{v}|+\alpha|v-\tilde{v}|\right)^2\\
&\leq2\left(1+\alpha\right)^2|x-\tilde{x}+v-\tilde{v}|^2+2\alpha^2|v-\tilde{v}|^2\\
&\leq4\left(\left(1+\alpha\right)^2+\alpha^2\right)\left(|x+v|^2+|v|^2+|\tilde{x}+\tilde{v}|^2+|\tilde{v}|^2\right).
\end{align*}
Therefore we obtain the final point.

\subsection{Proof of control of L1 and L2 Wasserstein distances}

We prove Lemma~\ref{rho_1_2}. Using the definition of $R_1$ and \eqref{eq:def_gamma_B}, and since $B\geq d\geq 1$ and $\gamma \leq \frac{1}{2}$, we have $R_1\geq 1$.
\begin{itemize}
\item First for the L1-Wasserstein distance
\begin{align*}
|x-x'|+|v-v'|\leq |v-v'+x-x'|+2|x-x'|\leq\max\left(\frac{2}{\alpha},1\right)r\left(\left(x,v\right),\left(x',v'\right)\right).
\end{align*}
If $r\left(\left(x,v\right),\left(x',v'\right)\right)\leq 1\leq R_1$
\begin{align*}
r\left(\left(x,v\right),\left(x',v'\right)\right)\leq \frac{f\left(r\right)}{f'_{-}\left(R_1\right)}\leq\frac{\rho\left(\left(x,v\right),\left(x',v'\right)\right)}{\phi\left(R_1\right)g\left(R_1\right)}.
\end{align*}
If $r\left(\left(x,v\right),\left(x',v'\right)\right)\geq 1$, we have shown \eqref{r_min_rho}
\begin{equation*}
r\left(\left(x,v\right),\left(x',v'\right)\right)\leq r^2\left(\left(x,v\right),\left(x',v'\right)\right)\leq4\frac{\left(1+\alpha\right)^2+\alpha^2}{\min\left(\frac{2}{3}\lambda,6\right)}\left(H\left(x,v\right)+H\left(x',v'\right)\right).
\end{equation*}
Thus
\begin{align*}
r\left(\left(x,v\right),\left(x',v'\right)\right)&\leq\frac{4}{\epsilon}\frac{\left(1+\alpha\right)^2+\alpha^2}{\min\left(\frac{2}{3}\lambda,6\right)}\left(\epsilon H\left(x,v\right)+\epsilon H\left(x',v'\right)\right)\\
&\leq\frac{4}{\epsilon}\frac{\left(1+\alpha\right)^2+\alpha^2}{\min\left(\frac{2}{3}\lambda,6\right)}\frac{\rho\left(\left(x,v\right),\left(x',v'\right)\right)}{f\left(r\right)}\\
&\leq\frac{4}{\epsilon}\frac{\left(1+\alpha\right)^2+\alpha^2}{\min\left(\frac{2}{3}\lambda,6\right)}\frac{\rho\left(\left(x,v\right),\left(x',v'\right)\right)}{f\left(1\right)}.
\end{align*}
Therefore
\begin{align*}
|x-x'|&+|v-v'|\leq\max\left(\frac{2}{\alpha},1\right)\max\left(\frac{4\left(\left(1+\alpha\right)^2+\alpha^2\right)}{\epsilon\min\left(\frac{2}{3}\lambda,6\right)f\left(1\right)},\frac{1}{\phi\left(R_1\right)g\left(R_1\right)}\right)\rho\left(\left(x,v\right),\left(x',v'\right)\right).
\end{align*}
\item Then for the L2-Wasserstein distance
\begin{align*}
|v-v'|^2=|v-v'+x-x'-\left(x-x'\right)|^2\leq2|v-v'+x-x'|^2+2|x-x'|^2.
\end{align*}
Hence
\begin{align*}
|x-x'|^2+|v-v'|^2\leq3\left(|v-v'+x-x'|^2+|x-x'|^2\right).
\end{align*}
But
\begin{align*}
r^2\left(\left(x,v\right),\left(x',v'\right)\right)&=\left(\alpha|x-x'|+|x-x'+v-v'|\right)^2\\
&\geq\alpha^2|x-x'|^2+|x-x'+v-v'|^2\\
&\geq \left(1+\alpha^2\right)\left(|x-x'|^2+|x-x'+v-v'|^2\right)\\
&\geq\frac{1+\alpha^2}{3}\left(|x-x'|^2+|v-v'|^2\right).
\end{align*}
If $r\left(\left(x,v\right),\left(x',v'\right)\right)\leq 1 \leq R_1$
\begin{align*}
r^2\left(\left(x,v\right),\left(x',v'\right)\right)\leq r\left(\left(x,v\right),\left(x',v'\right)\right)\leq \frac{f\left(r\right)}{f'_{-}\left(R_1\right)}\leq\frac{\rho\left(\left(x,v\right),\left(x',v'\right)\right)}{\phi\left(R_1\right)g\left(R_1\right)}.
\end{align*}
If $r\left(\left(x,v\right),\left(x',v'\right)\right)\geq 1$, we have shown \eqref{r_min_rho}
\begin{equation*}
r^2\left(\left(x,v\right),\left(x',v'\right)\right)\leq4\frac{\left(1+\alpha\right)^2+\alpha^2}{\min\left(\frac{2}{3}\lambda,6\right)}\left(H\left(x,v\right)+H\left(x',v'\right)\right).
\end{equation*}
Thus
\begin{align*}
r\left(\left(x,v\right),\left(x',v'\right)\right)&\leq\frac{4}{\epsilon}\frac{\left(1+\alpha\right)^2+\alpha^2}{\min\left(\frac{2}{3}\lambda,6\right)}\left(\epsilon H\left(x,v\right)+\epsilon H\left(x',v'\right)\right)\\
&\leq\frac{4}{\epsilon}\frac{\left(1+\alpha\right)^2+\alpha^2}{\min\left(\frac{2}{3}\lambda,6\right)}\frac{\rho\left(\left(x,v\right),\left(x',v'\right)\right)}{f\left(r\right)}\\
&\leq\frac{4}{\epsilon}\frac{\left(1+\alpha\right)^2+\alpha^2}{\min\left(\frac{2}{3}\lambda,6\right)}\frac{\rho\left(\left(x,v\right),\left(x',v'\right)\right)}{f\left(1\right)}.
\end{align*}
Therefore
\begin{align*}
|x-x'|^2&+|v-v'|^2\leq\frac{3}{1+\alpha^2}\max\left(\frac{4\left(\left(1+\alpha\right)^2+\alpha^2\right)}{\epsilon\min\left(\frac{2}{3}\lambda,6\right)f\left(1\right)},\frac{1}{\phi\left(R_1\right)g\left(R_1\right)}\right)\rho\left(\left(x,v\right),\left(x',v'\right)\right).
\end{align*}
\end{itemize}

%
%

\subsection{Proof of Lemma~\ref{lemma_dif_H}}\label{preuve_dif_H}

We have
\begin{align*}
H(x,v)-H(\tilde{x},\tilde{v})=&24\left(U(x)-U(\tilde{x})\right)+\left(6(1-\gamma)+\lambda\right)\left(|x|^2-|\tilde{x}|^2\right)+12\left(x\cdot v-\tilde{x}\cdot \tilde{v}\right)+12\left(|v|^2-|\tilde{v}|^2\right)\\
=&24\left(U(x)-U(\tilde{x})\right)+\left(6(1-\gamma)+\lambda-3\right)\left(|x|^2-|\tilde{x}|^2\right)+12\left(\left|v+\frac{x}{2}\right|^2-\left|\tilde{v}+\frac{\tilde{x}}{2}\right|^2\right).
\end{align*}
We first have
\begin{align*}
\left||x|^2-|\tilde{x}|^2\right| \leq\left|x-\tilde{x}\right|\left(|x|+|\tilde{x}|\right)\leq \frac{r(x,v,\tilde{x},\tilde{v})}{\alpha\sqrt{\lambda}}\left(\sqrt{H(x,v)}+\sqrt{H(\tilde{x},\tilde{v})}\right).
\end{align*}
Then
\begin{align*}
\left|\left|v+\frac{x}{2}\right|^2-\left|\tilde{v}+\frac{\tilde{x}}{2}\right|^2\right|\leq&\left|v+\frac{x}{2}-\tilde{v}-\frac{\tilde{x}}{2}\right|\left(\left|v+\frac{x}{2}\right|+\left|\tilde{v}+\frac{\tilde{x}}{2}\right|\right)\\
\leq&\frac{1}{\sqrt{12}}|v-\tilde{v}+\frac{1}{2}(x-\tilde{x})|\left(\sqrt{H(x,v)}+\sqrt{H(\tilde{x},\tilde{v})}\right)\\
\leq&\frac{1}{2\sqrt{3}}\max\left(1,\frac{1}{2\alpha}\right)r(x,v,\tilde{x},\tilde{v})\left(\sqrt{H(x,v)}+\sqrt{H(\tilde{x},\tilde{v})}\right).
\end{align*}
And finally
\begin{align*}
\left|U(x)-U(\tilde{x})\right|=&\left|\int_0^1\nabla U\left(\tilde{x}+t(x-\tilde{x})\right)\cdot (x-\tilde{x})dt\right|\\
\leq&\sup_{t\in[0,1]}|\nabla U\left(\tilde{x}+t(x-\tilde{x})\right)||x-\tilde{x}|\\
\leq&\left(\nabla U(0)+L_U(|x|+|\tilde{x}|)\right)|x-\tilde{x}|\\
\leq&\left(\nabla U(0)+\frac{L_U}{\sqrt{\lambda}}\left(\sqrt{H(x,v)}+\sqrt{H(\tilde{x},\tilde{v})}\right)\right)\frac{r(x,v,\tilde{x},\tilde{v})}{\alpha}.
\end{align*}
These three inequalities yield the desired result.

%
%

\section{Proof of Lemma~\ref{Choix_constantes}}\label{choix_cstes}

We first rewrite the various conditions on the parameters. 

\begin{itemize}
\item Since for all $u\geq0$, $0<\phi\left(u\right)\leq1$, we have $0<\Phi\left(s\right)=\int_0^{s}\phi\left(u\right)du\leq s$, i.e $s/\Phi\left(s\right)\geq1$ .
Therefore
\begin{align*}
\inf_{r\in]0,R_1]}\frac{r\phi\left(r\right)}{\Phi\left(r\right)}\geq\inf_{r\in]0,R_1]}\phi\left(r\right)=\phi\left(R_1\right).
\end{align*}
It is thus sufficient for \eqref{cond_region_2_prop} that 
\begin{equation}\label{cond_c_2}
c+2\epsilon B\leq\frac{1}{2}\left(1-\frac{1}{\alpha}\left(L_U+L_W\right)\right)\phi\left(R_1\right).
\end{equation}
\item We have
\begin{align*}
\phi\left(r\right)\leq\exp\left(-\frac{L_U+L_W}{8\alpha}r^2\right).
\end{align*}
So
\begin{align*}
\Phi\left(r\right)\leq\int_0^{\infty}\exp\left(-\frac{L_U+L_W}{8\alpha}s^2\right)ds=\sqrt{\frac{2\pi\alpha}{L_U+L_W}}.
\end{align*}
Then
\begin{align*}
\int_0^{R_1}\frac{\Phi\left(r\right)}{\phi\left(r\right)}dr\leq\sqrt{\frac{2\pi\alpha}{L_U+L_W}}R_1\frac{1}{\phi\left(R_1\right)}.
\end{align*}
It is thus sufficient for \eqref{cond_g_demi_prop} that 
\begin{equation}\label{cond_c_3}
c+2\epsilon B\leq2\sqrt{\frac{L_U+L_W}{2\pi\alpha}}\frac{\phi\left(R_1\right)}{R_1}.
\end{equation}
\end{itemize}

At this point, we have now proven that under Assumption~\ref{HypU}, Assumption~\ref{HypU_lip} and Assumption~\ref{HypW}, for the parameters to satisfy Lemma~\ref{Choix_constantes} it is sufficient for them to satisfy
\begin{equation}\label{cond_fin_alpha}
\alpha>L_U+L_W,
\end{equation}
\begin{equation}\label{cond_c_1}
c \leq  \frac{\gamma }{6}\left(1-\frac{\frac{5}{6}\gamma }{2\epsilon B+\frac{5}{6}\gamma }\right),
\end{equation}
\begin{equation}\label{cond_c_2}
c+2\epsilon B\leq\frac{1}{2}\left(1-\frac{1}{\alpha}\left(L_U+L_W\right)\right)\phi\left(R_1\right),
\end{equation}
\begin{equation}\label{cond_c_3}
c+2\epsilon B\leq2\sqrt{\frac{L_U+L_W}{2\pi\alpha}}\frac{\phi\left(R_1\right)}{R_1},
\end{equation}
with, again
\begin{align*}
B&=24\left(A+ \left(\lambda-\gamma\right)\tilde{A}+d\right),\hspace{0.3cm} R_1=\sqrt{\left(1+\alpha\right)^2+\alpha^2}\sqrt{\frac{24 }{5\gamma 
\min\left(3,\frac{1}{3}\lambda\right)}B}.
\end{align*}
Let us show that there are positive parameters $\epsilon$, $\alpha$, $L_W$ and $c$ satisfying those conditions.

For inequality~\eqref{cond_fin_alpha} it is sufficient, as $L_W< \frac{\lambda}{8}$, to consider
\begin{equation*}
\alpha=L_U+\frac{\lambda}{4},
\end{equation*}
while inequality \eqref{cond_c_1} first invites us to consider $2\epsilon B$ of a comparable order to $c$
\begin{equation*}
2\epsilon B=\delta c.
\end{equation*}
We have thus switched parameter $\epsilon$ for $\delta$.
First we translate \eqref{cond_c_1} into our new parameter: 
\begin{align*}
c \leq \frac{\gamma }{6}\left(1-\frac{\frac{5}{6}\gamma }{2\epsilon B+\frac{5}{6}\gamma }\right)&\iff c \leq \frac{\gamma }{6}\frac{\delta c}{\delta c+\frac{5}{6}\gamma }\\
&\iff 1\leq\frac{\gamma }{6}\frac{\delta}{\delta c+\frac{5}{6}\gamma }\text{ (since $c\geq0$)}\\
&\iff c\leq\frac{\gamma }{6}\frac{\delta-5}{\delta}.
\end{align*}

The appearance of $\phi\left(R_1\right)$ in \eqref{cond_c_2} and \eqref{cond_c_3} suggests we should try to minimize it. Let us assume, for simplicity, that $\epsilon\leq 1$, which is equivalent to having $c\leq\frac{2B}{\delta}$.
We then have
\begin{align}
\phi\left(r\right)=&\exp\left(-\frac{1}{8}\left(\frac{1}{\alpha}\left(L_U+L_W\right)+\alpha+96\epsilon\max\left(\frac{1}{2\alpha},1\right)\right)r^2\right)\nonumber\\
\geq& \exp\left(-\frac{1}{8}\left(\frac{L_U+L_W}{\alpha}+\alpha+96\max\left(\frac{1}{2\alpha},1\right)\right)r^2\right)\text{ on }[0,R_1]\label{min_phi}.
\end{align}
Now, using \eqref{min_phi}, we have for \eqref{cond_c_2} and \eqref{cond_c_3} that it is sufficient that
\begin{equation*}
c\leq \frac{1}{2(\delta+1)}\left(1-\frac{1}{\alpha}\left(L_U+L_W\right)\right)\exp\left(-\frac{1}{8}\left(\frac{L_U+L_W}{\alpha}+\alpha+96\max\left(\frac{1}{2\alpha},1\right)\right)R_1^2\right),
\end{equation*}
and
\begin{equation*}
c\leq \frac{2}{\delta+1}\sqrt{\frac{L_U+L_W}{2\pi\alpha}}\exp\left(-\frac{1}{8}\left(\frac{L_U+L_W}{\alpha}+\alpha+96\max\left(\frac{1}{2\alpha},1\right)\right)R_1^2\right).
\end{equation*}

We could now optimize parameter $\delta$, but for the sake of conciseness,  we choose $\delta=6$.

 Recall $0\leq L_W<\frac{\lambda}{8}$. This way, both in \eqref{c_explicit} and \eqref{maj_L_W}, $c$ and $\mathcal{C}_1$ can be bounded independently of $L_W$. Hence why $L_W$, in \eqref{maj_L_W} and \eqref{cond_c_0}, can be chosen last, and every other quantities can be chosen independently of $L_W$.

%
%

\section{$\nabla U$ locally Lipschitz-continuous}\label{Preuve_loc_lip}

In this section we replace Assumption~\ref{HypU_lip} with Assumption~\ref{HypU_loc_lip}. 
We assume, for $\nu^1_0$ and $\nu^2_0$ the initial conditions, 
\begin{equation}\label{init_cdt_loc_lip}
\forall i\in\{1,2\},\ \mathbb{E}_{\nu^i_0}\left(\left(\int_0^{H(X,V)}e^{a\sqrt{u}}du\right)^2\right)\leq (\mathcal{C}^0)^2
\end{equation}
We show how the proof can be modified to still obtain contraction. We make the following assumption
\begin{equation}\label{c_psi}
L_\psi\leq c_\psi(L_U, \lambda,\tilde{A},d,a) := L_U\frac{\gamma}{8}\min\left(\frac{a}{8},\epsilon_{\psi,*}\right)
\end{equation}
with $\tilde{B}$ given by \eqref{dyn_tilde_H_non_lin}-\eqref{Gronwall_exp_H} and $\gamma=\frac{\lambda}{2(\lambda+1)}$. Then, choose $\epsilon_{\psi,*}$ to be smaller than $\epsilon$, independently of $L_W$, using the assumption that $0\leq L_W\leq\frac{\lambda}{8}$. Like previously, we consider
\begin{equation*}
G_t=1+\epsilon  \tilde{H}\left(X_t,V_t\right)+\epsilon \tilde{H}(\tilde{X}_t,\tilde{V}_t).
\end{equation*}
Hence following the same method as previously we obtain
\begin{align}
K_t\leq&G_t\Big(c f\left(r_t\right)+\alpha f'\left(r_t\right)\frac{d|Z_t|}{dt}+ (L_U+L_W)f'\left(r_t\right) |Z_t|+L_Wf'\left(r_t\right) \mathbb{E}\left(|Z_t|\right)\label{K_t_loc_1}\\
&\hspace{1cm}+4 f''\left(r_t\right)rc^2\left(Z_t,W_t\right)\Big)+\frac{1}{2}\left(\epsilon\mathcal{C}_{f,1}+\mathcal{C}_{f,2}\right) r_tf'\left(r_t\right)rc\left(Z_t,W_t\right)^2\label{K_t_loc_2}\\
&+\epsilon\left(2\tilde{B}-\frac{\gamma }{8}\left(\tilde{H}\left(X_t,V_t\right) +\tilde{H}(\tilde{X}_t,\tilde{V}_t)\right)\right)f\left(r_t\right)\label{K_t_loc_3}\\
&+\left(\psi\left(X_t\right)+\psi(\tilde{X}_t)\right)|Z_t|f'\left(r_t\right)G_t-\frac{\gamma\epsilon}{8}\left(H\left(X_t,V_t\right)\exp\left(a\sqrt{H\left(X_t,V_t\right) }\right) +H(\tilde{X}_t,\tilde{V}_t)\exp\left(a\sqrt{H(\tilde{X}_t,\tilde{V}_t)}\right)\right)\label{K_t_loc_4}\\
&+\epsilon \frac{L_W}{\lambda}\left(6+8\lambda\right)\left(\exp\left(a\sqrt{H\left(X_t,V_t\right)}\right)\mathbb{E}H\left(X_t,V_t\right)+\exp\left(a\sqrt{H(\tilde{X}_t,\tilde{V}_t)}\right)\mathbb{E}H(\tilde{X}_t,\tilde{V}_t)\right)f\left(r_t\right).\label{K_t_loc_5}
\end{align}
We describe briefly how the terms will compensate each other before writing the calculations that are different.
\begin{itemize}
\item Like previously, lines \eqref{K_t_loc_1} and \eqref{K_t_loc_2} will 
be dealt with through the choice of function $f$, with the non linearity appearing at the end of \eqref{K_t_loc_1} giving us a remaining expectation (cf bullet \textbf{1} below),
\item line \eqref{K_t_loc_3} will intervene like before in the last region of space (where we use that for all $x$ in $\mathbb{R}$, $\tilde{H}\geq H$ to come back to calculations we've made in Section~\ref{region_3}, cf bullet \textbf{2} below) and in the first two region of space to compensate line \eqref{K_t_loc_4} (cf bullet \textbf{3} below),
\item and line \eqref{K_t_loc_5} will give us a remaining expectation (cf 
bullet \textbf{4} below).
\end{itemize}
Notice how we use the Lyapunov function to compensate $\psi$ appearing when considering $\nabla U$ only locally Lipschitz continuous. 
\begin{itemize}
\item[$\bullet$ \textbf{1}.] We can find a constant $\mathcal{C}_{1,e}$ such that for all $x,v,\tilde{x},\tilde{v}\in\mathbb{R}^d$,
\begin{align*}
|x-\tilde{x}|+|v-\tilde{v}|\leq\mathcal{C}_{1,e}\rho(x,v,\tilde{x},\tilde{v}),
\end{align*}
and thus
\begin{align*}
\mathbb{E}\left(\mathbb{E}\left(|Z_t|\right)G_tf'\left(r_t\right)\right)\leq\mathcal{C}_{1,e}\mathbb{E}\left(\rho_t\right)\mathbb{E}\left(G_t\right).
\end{align*}
\item[$\bullet$ \textbf{2}.] In the last region of space, we use the fact 
that
\begin{align}
K_t\mathds{1}_{R_3}\leq&\left(\left(c-\frac{\gamma}{8}\right)G_t+2\epsilon \tilde{B}+\frac{\gamma}{8}\right)f(r_t)\label{R_3_loc_1}
\end{align}
We deal with \eqref{R_3_loc_1} exactly like in Section~\ref{region_3}.
\item[$\bullet$ \textbf{3}.] To deal with the only locally Lipschitz continuous aspect, we use the upper bound on $\psi$ given in \eqref{c_psi}. In the first two regions of space we use $f'(r_t)|Z_t|\leq f'(r_t)r_t/\alpha\leq f(r_t)/\alpha$
\begin{align*}
G_t&\left(\psi\left(X_t\right)+\psi(\tilde{X}_t)\right)f'\left(r_t\right)|Z_t|\\
&-\epsilon\frac{\gamma }{8}\left(H\left(X_t,V_t\right)\exp\left(a\sqrt{H\left(X_t,V_t\right)}\right)+H(\tilde{X}_t,\tilde{V}_t)\exp\left(a\sqrt{H(\tilde{X}_t,\tilde{V}_t)}\right)\right)f\left(r_t\right)\\
&\leq\left(\psi\left(X_t\right)+\psi(\tilde{X}_t)\right)f\left(r_t\right)f'\left(r_t\right)|Z_t|\\
&\ \ +\frac{2\epsilon}{a\alpha}\left(\psi\left(X_t\right)+\psi(\tilde{X}_t)\right)\left( \sqrt{H\left(X_t,V_t\right)}\exp\left(a\sqrt{H\left(X_t,V_t\right)}\right)+\sqrt{H(\tilde{X}_t,\tilde{V}_t)}\exp\left(a\sqrt{H(\tilde{X}_t,\tilde{V}_t)}\right)\right)f\left(r_t\right)\\
&\ \ -\epsilon\frac{\gamma }{8}\left(H\left(X_t,V_t\right)\exp\left(a\sqrt{H\left(X_t,V_t\right)}\right)+H(\tilde{X}_t,\tilde{V}_t)\exp\left(a\sqrt{H(\tilde{X}_t,\tilde{V}_t)}\right)\right)f\left(r_t\right),
\end{align*}
where we used \eqref{control_tilde_H_2}. On one hand, since $\psi(x)\leq L_\psi\sqrt{H(x,v)}$, we have 
\begin{align*}
\psi(x)+\psi(\tilde{x})\leq L_\psi\sqrt{H(x,v)}+L_\psi\sqrt{H(\tilde{x},\tilde{v})}\leq L_\psi\left(\frac{H(x,v)+H(\tilde{x},\tilde{v})}{2}+1\right),
\end{align*}
and thus
\begin{align*}
\left(\psi(x)+\psi(\tilde{x})\right)&f'(r_t)|Z_t|\\
&\leq \frac{L_\psi}{2}\left(H(x,v)\exp\left(a\sqrt{H(x,v)}\right)+H(\tilde{x},\tilde{v})\exp\left(a\sqrt{H(\tilde{x},\tilde{v})}\right)\right)\frac{f(r_t)}{\alpha}+L_\psi f'(r_t)|Z_t|
\end{align*}
On the other hand
\begin{align*}
&\left(\psi(x)+\psi(\tilde{x})\right)\left(\sqrt{H(x,v)}\exp\left(a\sqrt{H(x,v)}\right)+\sqrt{H(\tilde{x},\tilde{v})}\exp\left(a\sqrt{H(\tilde{x},\tilde{v})}\right)\right)\\
&\ \ \leq L_\psi\left(\sqrt{H(x,v)}+\sqrt{H(\tilde{x},\tilde{v})}\right)\left(\sqrt{H(x,v)}\exp\left(a\sqrt{H(x,v)}\right)+\sqrt{H(\tilde{x},\tilde{v})}\exp\left(a\sqrt{H(\tilde{x},\tilde{v})}\right)\right)\\
&\ \ \leq 2L_\psi\left(H(x,v)\exp\left(a\sqrt{H(x,v)}\right)+H(\tilde{x},\tilde{v})\exp\left(a\sqrt{H(\tilde{x},\tilde{v})}\right)\right).
\end{align*}
This way, since $\frac{L_\psi}{2\alpha}\leq\epsilon\frac{\gamma}{16}$ and $\frac{4L_\psi\epsilon}{a\alpha}\leq\epsilon\frac{\gamma}{16}$ (recall $\alpha>L_U$), we get, since in the third region of space $f'\left(r_t\right)=0$,
\begin{align*}
G_t&\left(\psi\left(X_t\right)+\psi(\tilde{X}_t)\right)f'\left(r_t\right)|Z_t|\\
&-\epsilon\frac{\gamma }{8}\left(H\left(X_t,V_t\right)\exp\left(a\sqrt{H\left(X_t,V_t\right)}\right)+H(\tilde{X}_t,\tilde{V}_t)\exp\left(a\sqrt{H(\tilde{X}_t,\tilde{V}_t)}\right)\right)f\left(r_t\right)\leq L_\psi|Z_t|f'(r_t)G_t.
\end{align*}
The righthand side is dealt with through the choice of the concave function $f$ (we consider $L_U+ L_\psi$ instead of $L_U$).

\item[$\bullet$ \textbf{4}.] Likewise, we can bound
\begin{align*}
\mathbb{E}&\left(\epsilon\left(\exp\left(a\sqrt{H\left(X_t,V_t\right)}\right)\mathbb{E}H\left(X_t,V_t\right)+\exp\left(a\sqrt{H(\tilde{X}_t,\tilde{V}_t)}\right)\mathbb{E}H(\tilde{X}_t,\tilde{V}_t)\right)f\left(r_t\right)\right)\\
&\leq \mathcal{C}_{H,\tilde{H}}\mathbb{E}\left(\rho_t\right)+\mathcal{C}_{H,\tilde{H}}^0\mathbb{E}\left(\rho_t\right)e^{-\gamma  t},
\end{align*}
with $\mathcal{C}_{H,\tilde{H}}$ a constant independent of initial conditions and $\mathcal{C}_{H,\tilde{H}}^0$ another constant, possibly depending on initial conditions. Here, we used \eqref{gronwall_H} and \eqref{control_tilde_H}.
\end{itemize}
We can thus construct a function $f$ and a constant $c$, through the same 
calculations as before, such that there are $C$ and $C^0$ constants (resp. independent and dependent on initial conditions) such that
\begin{align*}
\forall t,\ \ e^{ct}\mathbb{E}\left(\rho_t\right)\leq\mathbb{E}\left(\rho_0\right)+\xi(1+\alpha)\mathbb{E}(G_t)e^{ct}+L_W C\int_0^t e^{cs}\mathbb{E}\left(\rho_s\right)ds+L_W C^0\int_0^t e^{\left(c-\gamma\lambda\right)s}\mathbb{E}\left(\rho_s\right)ds.
\end{align*}
Since $\mathbb{E}G_t$ is bounded uniformly in time, we may now conclude using Gronwall's lemma.

\subsection*{Acknowledgement}

This work has been funded by the project EFI ANR-17-CE40-0030.

\nocite{*}
\bibliographystyle{alpha}
\bibliography{biblio_2020_VF_longue}

\end{document}